\documentclass{amsart}

\usepackage{amssymb}
\usepackage{amsthm}
\usepackage{amsmath}
\usepackage{enumitem}
\usepackage[mathscr]{euscript}

\usepackage{cancel}
\usepackage{soul}

\usepackage[normalem]{ulem}

\usepackage[foot]{amsaddr}

\usepackage[usenames]{xcolor}

\usepackage{hyperref} 

\usepackage{tikz-cd}

\theoremstyle{plain}
\newtheorem{proposition}{Proposition}[section] 
\newtheorem{theorem}[proposition]{Theorem}
\newtheorem{lemma}[proposition]{Lemma}  

\theoremstyle{definition}
\newtheorem{example}[proposition]{Example} 
\newtheorem{definition}[proposition]{Definition}
\newtheorem{observation}[proposition]{Observation}

\theoremstyle{remark}
\newtheorem{remark}[proposition]{Remark}

\DeclareMathOperator{\Isom}{\mathsf{Isom}}

\DeclareMathOperator{\End}{End}

\DeclareMathOperator{\id}{id}

\DeclareMathOperator{\dist}{d}

\DeclareMathOperator{\Gr}{Gr}

\DeclareMathOperator{\Dc}{\mathcal{D}}
\DeclareMathOperator{\Fc}{\mathcal{F}}

\DeclareMathOperator{\Nc}{\mathcal{N}}
\DeclareMathOperator{\Oc}{\mathcal{O}}

\DeclareMathOperator{\Uc}{\mathcal{U}}

\DeclareMathOperator{\Hb}{\mathbb{H}}

\DeclareMathOperator{\Nb}{\mathbb{N}}
\DeclareMathOperator{\Pb}{\mathbb{P}}
\DeclareMathOperator{\Rb}{\mathbb{R}}

\DeclareMathOperator{\Gsf}{\mathsf{G}}
\DeclareMathOperator{\Psf}{\mathsf{P}}

\DeclareMathOperator{\PSL}{\mathsf{PSL}}

\DeclareMathOperator{\SL}{\mathsf{SL}}

\newcommand{\abs}[1]{\left|#1\right|}
\newcommand{\norm}[1]{\left\|#1\right\|}

\newcommand{\Ga}{\Gamma}
\newcommand{\ga}{\gamma}

\newcommand{\La}{\Lambda}

\newcommand{\Asf}{\mathsf{A}}

\newcommand{\Nsf}{\mathsf{N}}
\newcommand{\Ksf}{\mathsf{K}}

\newcommand{\fa}{\mathfrak{a}}
\newcommand{\mfa}{\mathfrak{a}}
\newcommand{\mfp}{\mathfrak{p}}
\newcommand{\mfg}{\mathfrak{g}}
\newcommand{\mfk}{\mathfrak{k}}

\newcommand{\Hsf}{\mathsf{H}}

\newcommand{\R}{\mathbb{R}}

\newcommand{\opp}{\mathrm{i}}

\begin{document}

\title[Vector-valued horofunction boundaries and PS-measures]{Vector-valued horofunction boundaries and Patterson--Sullivan measures}

\author[Kim]{Dongryul M. Kim}
\email{dongryul.kim97@gmail.com}
\address{Simons Laufer Mathematical Sciences Institute, USA}

\author[Zimmer]{Andrew Zimmer}
\email{amzimmer2@wisc.edu}
\address{Department of Mathematics, University of Wisconsin-Madison, USA}
\date{\today}

 \keywords{}
 \subjclass[2020]{}

\begin{abstract} In higher rank, there is a well-studied theory of Patterson--Sullivan measures supported on partial flag manifolds. However, establishing the existence and uniqueness of such  measures is a difficult question. In this paper, we develop a theory  for Patterson--Sullivan measures supported on certain vector-valued horofunction boundaries of the associated symmetric space, where existence is straightforward. We also introduce a notion of shadows for this compactification and establish a shadow lemma. For transverse groups, we prove uniqueness and ergodicity results.

\end{abstract}

\maketitle


\setcounter{tocdepth}{1}
\tableofcontents

\section{Introduction}

Throughout this paper $\Gsf$ will be a semisimple Lie group with finite center and no compact factors. We fix a Cartan decomposition $\mfg = \mfp + \mfk$ of the Lie algebra, a Cartan subspace  $\mfa \subset \mfp$, and a positive Weyl chamber $\mfa^+ \subset \mfa$. Then let $\Delta \subset \mfa^*$ denote the system of simple restricted roots corresponding  to the choice of $\mfa^+$.

Given a subset $\theta \subset \Delta$, let $\Psf_\theta < \Gsf$ denote the associated parabolic subgroup and let  $\Fc_\theta : = \Gsf /\Psf_\theta$  denote the associated partial flag manifold. There is a natural vector-valued cocycle $B_\theta^{IW} : \Gsf \times \Fc_\theta \rightarrow \mfa_\theta$ called the \emph{(partial) Iwasawa cocycle}, where  $\mfa_\theta \subset \mfa$ is the \emph{partial Cartan subspace}. This cocycle can be used to define Patterson--Sullivan measures as follows. 

\begin{definition} \label{defn:Quints definition}
Given a subgroup $\Gamma < \Gsf$, $\theta \subset \Delta$ non-empty,  a functional $\phi \in \mfa_\theta^*$, and $\delta \geq 0$, a Borel probability measure $\mu$ on $\Fc_\theta$ is a \emph{$(\Gamma, \phi, \delta)$-Patterson--Sullivan measure} if for every $\gamma \in \Gamma$ the measures $\mu$, $\gamma_*\mu$ are absolutely continuous and 
$$
\frac{d\gamma_* \mu}{d\mu}(x) = e^{-\delta \phi B_\theta^{IW}(\gamma^{-1}, x)} \quad \mu\text{-a.e.}
$$
\end{definition} 

When $\Gsf$ has rank one, the above definition (with an appropriate choice of functional) coincides with the classical Patterson--Sullivan measures introduced by Patterson \cite{Patterson1976} and Sullivan \cite{Sullivan_density}. In higher rank, the above definition is  due to Quint \cite{Quint_PS}.

For $\alpha \in \Delta$, let $\omega_\alpha$ denote the associated fundamental weight. The dual space of the  partial Cartan subspace $\mfa_\theta \subset \mfa$ can be identified with ${\rm span} \, \{ \omega_\alpha\}_{\alpha \in \theta}$. Then given $\phi \in \mfa_\theta^*= {\rm span} \, \{ \omega_\alpha\}_{\alpha \in \theta}$, the \emph{$\phi$-critical exponent} of a discrete subgroup $\Gamma < \Gsf$ is the exponential growth rate
\begin{equation} \label{eqn:exp growth rate}
\delta^\phi(\Gamma) : = \limsup_{T \rightarrow +\infty} \frac{1}{T} \log \# \left\{ \gamma \in \Gamma : \phi(\kappa(\gamma)) \leq T\right\} \in [0,+\infty]. 
\end{equation}

When $\Gsf$ has rank one, $\Delta = \{\alpha\}$ is a singleton and $\delta^{\omega_\alpha}(\Gamma)$ coincides with the classical symmetric space critical exponent. Further, there always exists a Patterson--Sullivan measure with dimension $\delta^{\omega_\alpha}(\Gamma)$. On the other hand, in higher rank, existence is much more subtle and for Zariski dense discrete subgroups the most general criteria for existence is due to Quint \cite{Quint_PS} and involves a not very easy condition to check on the growth indicator function. 

In this paper, we observe that the symmetric space $X$ associated to $\Gsf$ admits a vector-valued compactification $\overline{X}^\theta$ and there is a natural notion of Patterson--Sullivan measures on the boundary $\partial_\theta X$ of this compactification. Similar compactifications, but using Finsler metrics, appear in \cite{KL_Finsler,HSWW_horofunction,LP_horofunction_2023,Lemmens_horofunction_2023}.

We further show that the partial flag manifold $\Fc_\theta$ naturally embeds into $\partial_\theta X$ and under this embedding Patterson--Sullivan measures on $\Fc_\theta$ are sent to Patterson--Sullivan measures on $\partial_\theta X$. The boundary $\partial_\theta X$ is larger than $\Fc_\theta$, but has the advantage that existence of Patterson--Sullivan measures is straightforward to establish.  

We further show that when $\Gamma$ is sufficiently irreducible, these Patterson--Sullivan measures are part of a ``Patterson--Sullivan system,'' an abstract notion introduced in our earlier work \cite{KimZimmer1}. When $\Gamma$ is transverse, we show that these Patterson--Sullivan measures are part of a ``well-behaved Patterson--Sullivan system.''

One of the motivations for this work appears in a companion paper \cite{KimZimmer_Furstenberg}, where we combine the theory developed here with our earlier work in~\cite{KimZimmer1} to  establish new strict convexity results for variations of critical exponent and a new entropy rigidity result.

\subsection{Compactifications}

We now state the results of this paper more precisely.  Let $\Ksf < \Gsf$ denote the maximal compact subgroup with Lie algebra $\mfk$. Every element $g \in \Gsf$ can be written as $g = k e^{\kappa(g)} \ell$ for some $k, \ell \in \Ksf$ and a unique $\kappa(g) \in \mfa^+$. Then map  $\kappa : \Gsf \rightarrow \mfa^+$ is called the \emph{Cartan projection}. 

Let $X := \Gsf / \Ksf$ denote the symmetric space associated to $\Gsf$ and fix the basepoint $o : = \Ksf \in X$. For $x=go \in X$, define $b_x : X \rightarrow \mfa$ by 
$$
b_x(ho) = \kappa(h^{-1} g) - \kappa(g). 
$$
We will verify that the set $\{ b_x : x \in X\}$ is relatively compact in the space of continuous maps $X \rightarrow \mfa$ and hence we can compactify $X$ by taking the closure in this space, which we denote by $\overline{X}^{\Delta}$. We also let $\partial_{\Delta} X : = \overline{X}^{\Delta} \smallsetminus X$. 

More generally, given $\theta \subset \Delta$ non-empty there is a natural projection $\pi_\theta : \mfa \rightarrow \mfa_\theta$ we can use it to define a $\theta$-boundary $\partial_\theta X : = \{ \pi_\theta \circ \xi : \xi \in \partial_\Delta X\}$. We then show that $\partial_{\theta} X$ naturally compactifies $X$.

\begin{proposition}[see Proposition~\ref{prop:compactification} below]\label{prop:compactification intro} The space $\overline{X}^{\theta}: = X \sqcup \partial_{\theta} X$ has a topology which makes it a compactification of $X$, that is $\overline{X}^{\theta}$ is a compact metrizable space and the inclusion $X \hookrightarrow \overline{X}^{\theta}$ is a topological embedding with open dense image.

\end{proposition}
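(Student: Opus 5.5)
Let $C(X,\mfa_\theta)$ be the space of continuous maps $X \to \mfa_\theta$ with the topology of uniform convergence on compact sets; it is metrizable. For $x\in X$ set $b^\theta_x := \pi_\theta\circ b_x$, so $b^\theta_x(ho)=\pi_\theta(\kappa(h^{-1}g)-\kappa(g))$ when $x=go$. I will realize $\overline{X}^\theta$ as the closure of $\{b^\theta_x : x\in X\}$ in $C(X,\mfa_\theta)$, and I will check that this closure is exactly $\{b^\theta_x\}\sqcup\partial_\theta X$.

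\textbf{Relative compactness.} The key input is the standard Lipschitz estimate for the Cartan projection: $\norm{\kappa(gh)-\kappa(g)}\le \norm{\kappa(h)}$, and symmetrically $\norm{\kappa(hg)-\kappa(g)}\le\norm{\kappa(h)}$. Since $\norm{\kappa(h_1^{-1}h_2)} = \dist(h_1o,h_2o)$ up to normalization, this gives
\[
\norm{b_x(h_1o)-b_x(h_2o)}\le \norm{\kappa(h_2^{-1}h_1)} = \dist(h_1 o,h_2 o) \quad\text{and}\quad b_x(o)=0 .
\]
So $\{b_x\}$ is an equicontinuous, pointwise bounded family of $1$-Lipschitz maps vanishing at $o$. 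Arzel\`a--Ascoli then shows that it is relatively compact, and hence so is its image under the continuous map $\xi\mapsto\pi_\theta\circ\xi$. The closure $\overline{\{b^\theta_x\}}$ is therefore compact and metrizable, and it equals $\pi_\theta\big(\overline{\{b_x\}}\big)$ because a continuous map sends the compact closure onto the closure of the image.

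\textbf{Points at infinity and the embedding.} First I show that limits of $b^\theta_{x_n}$ with $x_n$ leaving every compact set are not of the form $b^\theta_y$. If $x_n=g_no\to\infty$, pass to a subsequence with $g_n=k_ne^{\kappa(g_n)}\ell_n$, $k_n\to k$. Then $\alpha(\kappa(g_n))\to\infty$ for some $\alpha$, but for $\theta$-boundary purposes I need divergence in $\mfa_\theta$. Since every $\omega_\alpha$ with $\alpha\in\theta$ is a positive combination of all simple roots, $\omega_\alpha(\kappa(g_n))\gtrsim\norm{\kappa(g_n)}\to\infty$. Hence $\omega_\alpha(b^\theta_{x_n}(o')) $ stays bounded below on compact sets, but there is a useful test: $b_y$ attains a minimum value $-\kappa(g)$ at $y$, i.e.\ $\omega_\alpha(b^\theta_y(y)) = -\omega_\alpha(\kappa(g))$ is a finite minimum of $\omega_\alpha\circ b^\theta_y$. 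For limit functions one shows $\inf_X \omega_\alpha\circ\xi=-\infty$, using $b^\theta_{x_n}(x_n)=-\pi_\theta\kappa(g_n)$ together with the Lipschitz bound to propagate along geodesic segments from $o$ to $x_n$, which converge to a ray. So limits at infinity are not of the form $b^\theta_y$, and the closure is $\{b^\theta_x\}\sqcup\partial_\theta X$.

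Next I show that $x\mapsto b^\theta_x$ is injective and a homeomorphism onto its image. Injectivity: $\omega_\alpha\circ b^\theta_x$ has a unique minimum at $x$. This follows because $\omega_\alpha(\kappa(h^{-1}g))$ is, up to scaling, $\log\norm{\rho_\alpha(h^{-1}g)}$ for a proximal representation $\rho_\alpha$ and a $\Ksf$-invariant norm; this is a strictly-minimized-at-$x$ function of $ho$ when restricted appropriately, using that $\sum_{\alpha\in\theta}\omega_\alpha$ gives a Finsler-type distance. Continuity is clear. For the inverse: if $b^\theta_{x_n}\to b^\theta_x$, the previous paragraph forbids $x_n\to\infty$, so $x_n$ is bounded, and injectivity together with compactness gives $x_n\to x$. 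Openness of the image follows from the same dichotomy, since the complement $\partial_\theta X$ is closed: a limit of boundary points that are limits of diverging sequences is again such a limit, by a diagonal argument. Density holds by definition. Transporting the topology via $x\mapsto b^\theta_x$ then proves the statement.

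\textbf{Main obstacle.} The delicate step is separating interior points from boundary points: showing that $\inf\omega_\alpha\circ\xi=-\infty$ for boundary $\xi$, and that the minimum is unique for interior points. I would handle both through the representations $\rho_\alpha$, where $\omega_\alpha\circ\kappa$ is a log-norm and the geometry is explicit.
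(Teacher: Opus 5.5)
\textbf{Gap: the argument assumes $\Gsf$ is simple.} The failure sits exactly at the step you flagged as delicate. Both the interior/boundary separation and the injectivity of $x\mapsto b^\theta_x$ rest on your claim that each $\omega_\alpha$ with $\alpha\in\theta$ is a positive combination of all simple roots, and hence is strictly positive on $\mfa^+\smallsetminus\{0\}$. That holds only when the restricted root system is irreducible, i.e.\ when $\Gsf$ is simple. The paper allows any semisimple $\Gsf$; its own example uses $\PSL(2,\R)\times\PSL(2,\R)$. In that setting $\omega_\alpha$ involves only the simple roots of the factor containing $\alpha$.

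If $\theta$ misses a simple factor, your conclusion is false, not merely unproved. Take $\Gsf=\PSL(2,\R)\times\PSL(2,\R)$ and $\theta=\{\alpha_1\}$. Then $\pi_\theta(a_1,a_2)=(a_1,0)$, so $b^\theta_{(x_1,x_2)}$ depends only on $x_1$, and $x\mapsto b^\theta_x$ is not injective. Now take $x_n=(o_1,y_n)$ with $y_n\to\infty$. This gives $b^\theta_{x_n}\equiv 0=b^\theta_o$, so the zero function is at once an interior function and a point of $\partial_\theta X$. The closure of $\{b^\theta_x\}$ in $C(X,\mfa_\theta)$ is then a copy of the compactified first $\Hb^2$ factor, a closed disk, into which $X$ does not even inject. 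So no topology transported via $x\mapsto b^\theta_x$ can work. The space $X\sqcup\partial_\theta X$ in the statement has to be treated as a formal disjoint union whose topology records more than the function $b^\theta_x$.

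\textbf{How the paper avoids this.} The paper never asks $b^\theta_x$ to determine $x$. It puts a Mandelkern-type metric directly on $X\sqcup\partial_\theta X$:
\[
\dist(x,y)=\min\{\dist_X(x,y),h(x)+h(y)\}+\dist_0(b^\theta_x,b^\theta_y),
\]
\[
\dist(x,\xi)=h(x)+\dist_0(b^\theta_x,\xi), \qquad \dist(\xi_1,\xi_2)=\dist_0(\xi_1,\xi_2).
\]
Here $h(x)=1/(1+\dist_X(o,x))$ and $\dist_0$ metrizes compact-open convergence. Each piece then does one job:
\begin{itemize}
\item The $\dist_X$ term makes $X\hookrightarrow\overline{X}^\theta$ an embedding.
\item The bound $\dist(x,\xi)\ge h(x)$ makes $X$ open and forces any sequence converging to a boundary point to escape.
\item The inequality $\abs{h(x)-h(y)}\le\dist_X(x,y)$ gives the triangle inequality.
\item Your Lipschitz and Arzel\`a--Ascoli step gives compactness.
\end{itemize}
No injectivity or interior-versus-boundary separation is needed.

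\textbf{Where your argument does work.} It goes through when $\Gsf$ is simple. Then $\omega_\alpha\kappa(h^{-1}g)\ge 0$ with equality iff $ho=go$, which gives the unique minimum. Along the limiting ray $ke^{tH_\infty}o$ one gets $\xi=-t\pi_\theta H_\infty$, so $\inf\omega_\alpha\circ\xi=-\infty$. More generally it works when $\theta$ meets every simple factor, after replacing $\omega_\alpha$ by $\sum_{\alpha\in\theta}\omega_\alpha$. The statement, however, imposes neither restriction.
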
 

\subsection{Patterson--Sullivan measures}

There is a well-known definition of Patterson--Sullivan measures on the  horofunction boundary of a metric space, see e.g. ~\cite{LedWang2010}. In the case of a vector-valued horofunction boundary, a choice of functional leads to a natural notion of Patterson--Sullivan measure in this compactification. 

\begin{definition}\label{defn:PS meas on partial X} 
Given a subgroup $\Gamma < \Gsf$, $\theta \subset \Delta$ non-empty,  a functional $\phi \in \mfa_\theta^*$, and $\delta \geq 0$, a Borel probability measure $\mu$ on $\partial_{\theta} X$ is a \emph{$(\Ga, \phi, \delta)$-Patterson--Sullivan measure} if for every $\gamma \in \Gamma$ the measures $\mu, \gamma_* \mu$ are absolutely continuous and 
$$
\frac{d\gamma_* \mu}{d\mu}(\xi) = e^{-\delta \phi \xi(\gamma o)} \quad \mu\text{-a.e}.
$$
\end{definition} 

Using the horofunction-like definition of these compactifications, it is possible to use Patterson's original construction to build Patterson--Sullivan measures.

\begin{proposition}[see Proposition~\ref{prop:PS meas exists} below]
   \label{prop:PS existence new boundary intro}
   Suppose $\Gamma < \Gsf$ is discrete. If $\phi \in \mfa_\theta^*$ and $\delta^\phi(\Gamma) < +\infty$, then there exists a $(\Ga, \phi, \delta^\phi(\Gamma))$-Patterson--Sullivan measure on  $\partial_{\theta} X$. 
\end{proposition}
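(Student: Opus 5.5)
We follow Patterson's original construction, carried out in the compactification $\overline{X}^\theta$ of Proposition~\ref{prop:compactification intro}. For $x = go \in X$ we write $b^\theta_x := \pi_\theta \circ b_x$, so that $\phi(b^\theta_x(ho)) = \phi(\kappa(h^{-1}g)) - \phi(\kappa(g))$; here we use $\phi \in \mfa_\theta^*$, hence $\phi\circ\pi_\theta = \phi$.

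Set $\delta := \delta^\phi(\Gamma)$. Since $\Gamma$ is discrete, the orbit $\Gamma o$ is discrete and each $\gamma o$ determines $\gamma$ up to the finite group $\Gamma\cap \Ksf$, so we may sum over $\Gamma$. First we treat the divergence type case, $\sum_\gamma e^{-\delta \phi(\kappa(\gamma))} = +\infty$. For $s > \delta$ we define
$$
\mu_s := \frac{1}{Q(s)} \sum_{\gamma \in \Gamma} e^{-s \phi(\kappa(\gamma))} \mathcal{D}_{\gamma o}, \qquad Q(s) := \sum_{\gamma\in\Gamma} e^{-s\phi(\kappa(\gamma))},
$$
which is a probability measure on $\overline{X}^\theta$. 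Here $\mathcal{D}_{\gamma o}$ denotes the Dirac mass at the point $\gamma o \in X\subset \overline{X}^\theta$.

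If $\Gamma$ is not of divergence type, we use Patterson's slowly increasing function $h$. This is a function such that for every $\epsilon>0$ there is $T_\epsilon$ with $h(t+u) \le e^{\epsilon u} h(t)$ for $u\ge 0$ and $t\ge T_\epsilon$, and such that $\sum_\gamma h(\phi(\kappa(\gamma)))e^{-\delta\phi(\kappa(\gamma))}$ diverges. We then insert the weight $h(\phi(\kappa(\gamma)))$ into $\mu_s$. Its existence is the standard lemma applied to the sequence of numbers $\phi(\kappa(\gamma))$. Finiteness of $\delta$ guarantees that the sums converge for $s>\delta$ and that $Q(s)\to\infty$ as $s\searrow\delta$. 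Note that the growth condition only makes sense for $T\to+\infty$ when $\phi(\kappa(\gamma))\to+\infty$ along $\Gamma$. This is forced by $\delta<\infty$ together with discreteness, since otherwise the set $\{\phi\circ\kappa \le T\}$ would be infinite.

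By compactness and metrizability, some sequence $s_n \searrow \delta$ gives a weak-$*$ limit $\mu$. The measure $\mu$ is supported on $\partial_\theta X$: each compact subset of $X$ contains only finitely many orbit points, each of which has mass at most $e^{-s\phi(\kappa(\gamma))}/Q(s) \to 0$.

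The key step is the quasi-invariance. By construction $(\gamma_0)_*\mu_s$ has density with respect to $\mu_s$ at the point $\gamma o$ equal to
$$
e^{-s[\phi(\kappa(\gamma_0^{-1}\gamma)) - \phi(\kappa(\gamma))]} = e^{-s\phi(b^\theta_{\gamma o}(\gamma_0 o))},
$$
up to the $h$-ratio in the non-divergent case. This ratio tends to $1$ uniformly as $\phi(\kappa(\gamma))\to\infty$, because $|\phi(\kappa(\gamma_0^{-1}\gamma)) - \phi(\kappa(\gamma))|$ is bounded by a constant depending on $\gamma_0$ (the standard estimate $\|\kappa(gh)-\kappa(h)\|\le\|\kappa(g)\|$).

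To pass to the limit we need the function $F_{\gamma_0}(z) := e^{-\delta\phi(z(\gamma_0 o))}$, defined for $z \in \overline{X}^\theta$ with $z$ identified with $b^\theta_z$ on $X$, to be continuous on $\overline{X}^\theta$. This is exactly how the topology is defined: convergence $x_n\to\xi$ means $b^\theta_{x_n}\to\xi$ locally uniformly, and in particular pointwise at $\gamma_0 o$. We then test against continuous $f$, getting
$$
\int f \, d(\gamma_0)_*\mu_s = \int f \cdot e^{-s\phi(b^\theta_z(\gamma_0 o))}\, d\mu_s(z) + o(1),
$$
where the error is controlled since mass escapes to the boundary and $s\to\delta$ uniformly. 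In the limit this yields $d(\gamma_0)_*\mu = F_{\gamma_0}\, d\mu$. Since $F_{\gamma_0}>0$, the measures are mutually absolutely continuous, with the required derivative.

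The main obstacle is making the error term rigorous, combining the $h$-weights with the bounded displacement estimate. This is routine and parallels the classical proof.
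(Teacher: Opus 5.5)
Your proposal is correct and is essentially the paper's proof: Patterson's weighted orbit measures $\mu_s$ on $\overline{X}^\theta$ (with a slowly increasing weight $h$ in the convergent case), a weak-$*$ limit as $s \searrow \delta^\phi(\Gamma)$ that is carried by $\partial_\theta X$, and passage of the quasi-invariance to the limit via the continuity of $B_\theta$ and of the $\Gsf$-action from Proposition~\ref{prop:compactification}. Your additive form $h(t+u)\le e^{\epsilon u}h(t)$ is the right normalization when $h$ is evaluated at $\phi(\kappa(\gamma))$, and the one case your argument does not cover is finite $\Gamma$ (where $\delta^\phi(\Gamma)=0$ and $Q(s)$ stays bounded); that case is trivial, since averaging a Dirac mass over a $\Gamma$-orbit in $\partial_\theta X$ gives the required invariant measure.
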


We further show that Patterson--Sullivan measures on $\Fc_\theta$ are a special case of those on $\partial_\theta X$. 

\begin{proposition}[see Proposition~\ref{prop:embedding of partial flags} below] There is a topological embedding $\iota : \Fc_{\theta} \hookrightarrow \partial_{\theta} X$ which satisfies 
$$
\iota(x)(go) = B_{\theta}^{IW}(g^{-1}, x)
$$
for all $x \in \Fc_\theta$ and all $g \in \Gsf$. Hence the pushforward of any Patterson--Sullivan measure on $\Fc_{\theta}$ (in the sense of Definition~\ref{defn:Quints definition}) is a Patterson--Sullivan measure on $\partial_{\theta} X$ (in the sense of Definition~\ref{defn:PS meas on partial X}). 
\end{proposition}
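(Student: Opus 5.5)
The plan is to realize each flag as a limit of the horofunctions $b_x$ along a Weyl chamber ray, and then identify the limit with the Iwasawa cocycle. Write $x = k\Psf_\theta \in \Fc_\theta$ with $k \in \Ksf$. Fix $H$ in the interior of $\mfa^+$ and consider the points $x_t := k e^{tH} o$. I will show that $\pi_\theta \circ b_{x_t}$ converges, uniformly on compact subsets of $X$, to the map $go \mapsto B^{IW}_\theta(g^{-1}, x)$, and define $\iota(x)$ to be this limit.

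\textbf{Step 1: the limit exists and equals the cocycle.} Unwinding the definition,
$$\pi_\theta b_{x_t}(go) = \pi_\theta\bigl(\kappa(g^{-1}k e^{tH}) - \kappa(e^{tH})\bigr).$$
The standard comparison between the Cartan projection and the Iwasawa cocycle says that
$$\pi_\theta \kappa(h e^{tH}) - \pi_\theta B^{IW}(h e^{tH}, \Psf) \to 0 \quad \text{as } t \to \infty.$$
This holds uniformly for $h$ in compact sets: as $t\to\infty$ the element $he^{tH}$ becomes more and more $\theta$-regular with attracting flag $h\Psf_\theta$. I would prove it via the representations $\Lambda_\alpha$ with highest weight $\chi_\alpha$ for $\alpha \in \theta$, where
$$\omega_\alpha(\kappa(g)) = \log\|\Lambda_\alpha(g)\|, \qquad \omega_\alpha(B^{IW}(g,\Psf)) = \log\|\Lambda_\alpha(g)v_\alpha\|$$
with $v_\alpha$ a unit highest weight vector, up to normalization by the multiplicity of $\chi_\alpha$. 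Since $\Lambda_\alpha(e^{tH})$ expands the $v_\alpha$ direction at the top rate with a uniform spectral gap, $\|\Lambda_\alpha(he^{tH})\|/\|\Lambda_\alpha(he^{tH})v_\alpha\| \to 1$. Applying this with $h = g^{-1}k$ and using the cocycle identity together with $B^{IW}(e^{tH},\Psf)=tH=\kappa(e^{tH})$ gives
$$\pi_\theta b_{x_t}(go) \to B^{IW}_\theta(g^{-1}k, \Psf) - B^{IW}_\theta(k,\Psf) = B^{IW}_\theta(g^{-1}, x).$$
Here $B^{IW}(k,\cdot)=0$. Because $x_t \to \infty$ in $X$, the limit lies in $\partial_\theta X$: it is the $\pi_\theta$-image of a subsequential limit in $\partial_\Delta X$.

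\textbf{Step 2: $\iota$ is a topological embedding.} Continuity of $\iota$ follows from continuity of $B^{IW}_\theta$ in $x$, using that the topology on $\overline{X}^\theta$ is that of locally uniform convergence from Proposition~\ref{prop:compactification}. For injectivity, suppose $x\neq y$. Then for some $\alpha\in\theta$ the lines $\Lambda_\alpha(k)v_\alpha$ and $\Lambda_\alpha(k')v_\alpha$ differ, and the representation yields $g$ with
$$\omega_\alpha B^{IW}(g^{-1},x) \neq \omega_\alpha B^{IW}(g^{-1},y).$$
Concretely, take $g^{-1}=e^{sH'}k^{-1}$: along this family the value at $x$ grows at the top rate, while at $y$ it grows strictly slower. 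Since $\Fc_\theta$ is compact and $\partial_\theta X$ is Hausdorff (indeed metrizable), a continuous injection is an embedding.

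\textbf{Step 3: Patterson--Sullivan measures.} If $\mu$ is a Patterson--Sullivan measure on $\Fc_\theta$, then $\iota$ is $\Gsf$-equivariant with respect to the action $(g\xi)(z)=\xi(g^{-1}z)-\xi(g^{-1}o)$; this is checked with the cocycle identity. Hence
$$\frac{d\gamma_*\iota_*\mu}{d\iota_*\mu}(\iota x) = e^{-\delta\phi B^{IW}_\theta(\gamma^{-1},x)} = e^{-\delta\phi\,\iota(x)(\gamma o)},$$
which is exactly Definition~\ref{defn:PS meas on partial X}.

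The main obstacle is Step 1, in particular the uniform convergence $\|\Lambda_\alpha(he^{tH})\|/\|\Lambda_\alpha(he^{tH})v_\alpha\|\to1$ for $h$ ranging over a compact set. I would handle it via the decomposition of $\Lambda_\alpha(h)$ into weight spaces together with the spectral gap between $\chi_\alpha(H)$ and the next weight evaluated on $H$.
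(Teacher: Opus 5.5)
Your proposal is correct. Steps 2 and 3 match the paper. Injectivity is proved there the same way: evaluate at $h^{-1}=e^{H}k_2^{-1}$, use that $v\mapsto\norm{\Phi_\alpha(e^H)v}/\norm{v}$ is maximized only on $V_\alpha^+$, then use compactness of $\Fc_\theta$. The paper calls the pushforward claim immediate, and your equivariance check $\iota(gx)=g\cdot\iota(x)$ supplies the one-line justification.

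The real difference is Step 1. The paper does no computation along a ray. It cites Quint's Lemma 6.6, which gives $\pi_\theta\kappa(h^{-1}g_n)-\pi_\theta\kappa(g_n)\to B^{IW}_\theta(h^{-1},x)$ for \emph{every} sequence with $\min_{\alpha\in\theta}\alpha(\kappa(g_n))\to+\infty$ and $U_\theta(g_n)\to x$. That both defines $\iota$ and gives the stronger fact that every such $g_no$ converges to $\iota(x)$ in $\overline{X}^\theta$. The paper relies on this later, e.g.\ in the proof that $U_\theta(g)$ lies in its own shadow. It then derives the highest-weight formula $N_\alpha\omega_\alpha\iota(x)(ho)=\log\norm{\Phi_\alpha(h^{-1})v_\alpha}$ from that convergence plus the rank-one limit lemma.

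You instead prove convergence only along $ke^{tH}o$, using the spectral gap $e^{-tN_\alpha\omega_\alpha(H)}\Phi_\alpha(e^{tH})\to$ orthogonal projection onto $V_\alpha^+$ (the paper's $\Phi_\alpha$ playing the role of your $\Lambda_\alpha$). You read off the highest-weight formula directly from the Iwasawa decomposition, since $\Nsf$ fixes $V_\alpha^+$. This suffices for the statement as posed and avoids the external citation. The same computation, run through a Cartan decomposition $g_n=k_na_n\ell_n$, would recover the general convergence if you wanted it.

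One small imprecision in your injectivity step: the value at $y$ need not grow at a strictly smaller exponential rate. It does not if $\Lambda_\alpha(k^{-1}k')v_\alpha$ has a nonzero $V_\alpha^+$-component. What you actually need is the strict inequality $\norm{\Phi_\alpha(e^{sH'})u}<e^{sN_\alpha\omega_\alpha(H')}$ for unit $u$ not on the line $V_\alpha^+$. This holds for every $s>0$ because $\alpha(H')>0$.
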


We further show that these measures are parts of ``Patterson--Sullivan systems'' (see Section~\ref{sec:PS systems}), which were introduced in our previous work ~\cite{KimZimmer1}. This allows us to use the theory of such systems developed there and in particular implies that these Patterson--Sullivan measures satisfy a version of the shadow lemma.

In a companion paper  \cite{KimZimmer_Furstenberg}, we use the fact that these measures are parts of ``Patterson--Sullivan systems'' to apply a version of Tukia's measurable boundary rigidity theorem for such systems (established in~\cite{KimZimmer1}).

\subsection{Transverse groups} Building on work in \cite{CZZ2024}, we further establish uniqueness and ergodicity results for the class of transverse groups. Fix a non-empty $\theta \subset \Delta$. For simplicity in the introduction, we assume that $\theta$ is invariant under the opposition involution (see Equation \eqref{eqn:opp involution}), and we only consider Zariski dense subgroups. Our results in fact hold for any $\theta \subset \Delta$ and under weaker irreducibility assumptions than Zariski density (see Theorem \ref{thm:uniqueness for transverse} below).

For a Zariski dense subgroup $\Ga < \Gsf$, there exists a unique closed $\Ga$-minimal set $\La_{\theta}(\Ga) \subset \Fc_{\theta}$ called the \emph{limit set} of $\Ga$ \cite{Benoist_properties}. Then the group $\Ga$ is \emph{$\Psf_{\theta}$-transverse} if 
\begin{itemize}
   \item for any escaping $\{ \ga_n \} \subset \Ga$ and $\alpha \in \theta$,
   $$
   \alpha(\kappa(\ga_n)) \to + \infty
   $$
   and
   \item any distinct $x, y \in \La_{\theta}(\Ga)$ are transverse, i.e., the diagonal $\Gsf$-orbit
   $\Gsf \cdot (x, y) \subset \Fc_{\theta} \times \Fc_{\theta}$ is open.
\end{itemize}
The notion of transverse groups is a higher rank generalization of rank one discrete subgroups, and all Anosov and relatively Anosov groups are transverse groups. Transverse groups are sometimes called regular antipodal groups.

An important property of a $\Psf_{\theta}$-transverse group $\Ga < \Gsf$ is that the natural $\Ga$-action on the limit set $\La_{\theta}(\Ga)$ is a convergence action, see \cite[Theorem 4.16]{KLP_Anosov} or \cite[Proposition 3.3]{CZZ2026}. Hence, the notion of \emph{conical limit set} $\La_{\theta}^{\rm con}(\Ga) \subset \La_{\theta}(\Ga)$ is naturally defined.

In \cite{CZZ2024}, Canary, Zhang, and the second author established a higher rank generalization of the classical Hopf--Tsuji--Sullivan dichotomy for transverse groups which implies uniqueness of Patterson--Sullivan measure when the associated Poincar\'e series diverges. Furthermore, they also showed that such a unique measure is supported on the conical limit set. For Zariski dense groups, uniqueness was extended to measures on $\Fc_{\theta}$ by the first author, Oh, and Wang \cite{KOW_PD}.

Using this previous work and our realization of $\partial_{\theta} X$ as a part of Patterson--Sullivan system, we further extend the uniqueness of Patterson--Sullivan measure to the boundary $\partial_{\theta} X$.

\begin{theorem}[see Theorem~\ref{thm:uniqueness for transverse} below]\label{thm:uniqueness for transverse intro}  Suppose $\theta \subset \Delta$ and $\Gamma < \Gsf$ is a Zariski dense $\Psf_{\theta}$-transverse group. If  $\phi \in \mfa_\theta^*$, $\delta^{\phi}(\Ga) < + \infty$, and  
$$
\sum_{\ga \in \Ga} e^{-\delta^{\phi}(\Ga) \phi(\kappa(\ga))} = + \infty,
$$
then there is a  unique $(\Ga, \phi, \delta^{\phi}(\Ga))$-Patterson--Sullivan measure $\mu$ on $\partial_\theta X$, the $\Gamma$-action on $(\partial_\theta X, \mu)$ is ergodic, and 
   $$
   \mu(\La_{\theta}^{\rm con}(\Ga)) = 1
   $$
(in particular, $\mu$ is supported on $\Fc_\theta$).
\end{theorem}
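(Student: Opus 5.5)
Existence is given by Proposition~\ref{prop:PS existence new boundary intro}, so it remains to show that any $(\Ga,\phi,\delta)$-Patterson--Sullivan measure $\mu$ on $\partial_\theta X$, with $\delta:=\delta^\phi(\Ga)$, is ergodic, gives full mass to $\iota(\La_\theta^{\rm con}(\Ga))$, and is unique. The plan is to reduce to the flag manifold, where the Hopf--Tsuji--Sullivan dichotomy of \cite{CZZ2024} and the uniqueness result of \cite{KOW_PD} apply.

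\textbf{Step 1: shadows and the shadow lemma.} For $\ga\in\Ga$ and $R>0$, define a shadow $\mathcal{O}_R(\ga)\subset\partial_\theta X$ as the set of $\xi$ for which $\xi(\ga o)$ is within $R$ of $-\pi_\theta\kappa(\ga)$. I would first show that $\mu$ belongs to a Patterson--Sullivan system in the sense of \cite{KimZimmer1}. Here the Zariski density and transversality of $\Ga$ supply the needed irreducibility, and the cocycle is $\xi\mapsto\xi(\ga^{-1}o)$. The theory of \cite{KimZimmer1} then gives the shadow lemma
\[
\mu(\mathcal{O}_R(\ga))\asymp e^{-\delta\phi(\kappa(\ga))}
\]
for $R$ large.

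\textbf{Step 2: the divergent series gives full mass to the conical set.} Let $\Lambda^{\rm con}\subset\partial_\theta X$ be the set of $\xi$ lying in infinitely many shadows $\mathcal{O}_R(\ga)$. The divergence of the Poincar\'e series, the shadow lemma, and a Kochen--Stone type quasi-independence argument show that $\mu(\Lambda^{\rm con})>0$. This is the standard part of the Hopf--Tsuji--Sullivan argument in \cite{CZZ2024}, run in the abstract setting of \cite{KimZimmer1}.

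\textbf{Step 3 (main obstacle): conical points lie in the flag manifold.} One must show that $\Lambda^{\rm con}\subset\iota(\La_\theta^{\rm con}(\Ga))$. Take $\xi$ in shadows of a sequence $\ga_n\to\infty$. Write $\ga_n=k_ne^{\kappa(\ga_n)}\ell_n$. Transversality gives $\alpha(\kappa(\ga_n))\to\infty$ for all $\alpha\in\theta$, and $k_n\Psf_\theta$ converges to some $x\in\La_\theta(\Ga)$. I would use the cocycle identity together with the formula $\iota(x)(go)=B^{IW}_\theta(g^{-1},x)$ to show the following: a horofunction whose values along $\ga_no$ track $-\pi_\theta\kappa(\ga_n)$ up to a bounded error must equal $\iota(x)$. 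The mechanism is to apply $\ga_n^{-1}$. The translated horofunctions $\ga_n^{-1}\xi$ stay in a compact family that is uniformly controlled at $o$. Meanwhile $\ga_n^{-1}$ contracts everything transverse to the repelling flag onto the attracting flag, which is the convergence-group dynamics on $\Fc_\theta$. The delicate point is that $\partial_\theta X$ contains points that are not flags. So I would prove directly, from the $KAK$ description of the limits $b_{x_m}$, that $\theta$-regular divergence with bounded horofunction error forces the limit to depend only on the $\Fc_\theta$-component. Conicality of $x$ then follows from the definition.

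\textbf{Step 4: conclusion.} By Step 3, $\mu$ restricted to $\iota(\Fc_\theta)$ is a nonzero measure on $\Fc_\theta$ satisfying Quint's Definition~\ref{defn:Quints definition}; this uses Proposition~\ref{prop:embedding of partial flags}. After normalizing, \cite{CZZ2024} and \cite{KOW_PD} show it is the unique such measure, ergodic, and of full mass on $\La_\theta^{\rm con}(\Ga)$. Because $\mu|_{\partial_\theta X\smallsetminus\Lambda^{\rm con}}$ is again a $\Ga$-quasi-invariant measure with the same cocycle, I would rerun Step 2 on it. If it were nonzero, it would give positive mass to $\Lambda^{\rm con}$, a contradiction. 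Hence $\mu(\Lambda^{\rm con})=1$, and therefore $\mu(\La_\theta^{\rm con}(\Ga))=1$. Uniqueness and ergodicity then transfer from the flag manifold: any invariant set of intermediate measure would split $\mu$ into two distinct normalized Patterson--Sullivan measures on $\Fc_\theta$, contradicting uniqueness there.
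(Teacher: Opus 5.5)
Your overall route matches the paper's: put $\mu$ into a Patterson--Sullivan system, get $\mu(\La^{\rm con}_\theta(\Ga))>0$ from Theorem~\ref{thm:div => pos meas conical}, upgrade to full mass via $\Ga$-invariance (Lemma~\ref{lem:shadow translate shadow}) and the restriction trick, observe that conical points are flags, and import uniqueness and ergodicity from \cite{CZZ2024}. Your Step~4 is essentially the paper's argument.

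\textbf{The gap: Steps~1--2.} These steps assert exactly what the paper spends Section~\ref{sec:verifying the PS axioms} proving. Theorem~\ref{thm:div => pos meas conical}, your Kochen--Stone step, requires the system to be \emph{well-behaved}, i.e.\ \ref{item:properness}--\ref{item:diam goes to zero ae}. You neither name these axioms nor give a mechanism for them.

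The decisive one is \ref{item:intersecting shadows}. If $\Oc_R(\ga_1)\cap\Oc_R(\ga_2)\neq\emptyset$ and $\phi(\kappa(\ga_1))\le\phi(\kappa(\ga_2))$, you need $\Oc_R(\ga_2)\subset\Oc_{R'}(\ga_1)$ and almost additivity of $\phi\circ\kappa$ along $\ga_1,\ga_1^{-1}\ga_2$. This is what yields the quasi-independence bound $\mu(\Oc_R(\ga_1)\cap\Oc_R(\ga_2))\lesssim e^{-\delta\phi(\kappa(\ga_1))}e^{-\delta\phi(\kappa(\ga_1^{-1}\ga_2))}$.

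Transversality enters here, not as ``irreducibility''. The paper proceeds in three stages:
\begin{enumerate}
\item It proves that $\ga\mapsto\phi(\kappa(\ga))$ is proper (Lemma~\ref{lem:properness}), via the shadow lemma, an atom argument, and \cite{BCZZ_counting}.
\item It shows that $U_\theta(h_n^{-1})$ and $U_\theta(h_n^{-1}g_n)$ merge (Lemma~\ref{lem:BCZZ converging to the same point}). This uses that distinct limit points are transverse, so $T_\alpha S_\alpha\neq0$.
\item It deduces $\ga_1^{-1}\Oc_R(\ga_2)\subset\Oc_{R'}(\ga_1^{-1}\ga_2)$, the nesting, and the additivity (Lemmas~\ref{lem: the claim needed for PS7}--\ref{lem:transversePS7_2}).
\end{enumerate}
Strong irreducibility plays a separate role. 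It gives \ref{item:empty Z intersection} and \ref{item:baire}, and \ref{item:empty Z intersection} is needed already for your shadow lemma. This comes from Lemma~\ref{lem:rho irr implies flag irr} applied to the description $Z\subset\bigcup_\alpha\{\xi: S_\alpha T^\xi_\alpha=0\}$.

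\textbf{Step~3 is the short part.} What you call the main obstacle takes only Lemmas~\ref{lem:endpoint in shadow} and~\ref{lem:diam decay} in the paper, and your sketch has the dynamics reversed. Write $\xi=\ga_n(\ga_n^{-1}\xi)$ and let $(T^{(n)}_\alpha)$ represent $\ga_n^{-1}\xi$. The shadow condition controls $\ga_n^{-1}\xi$ at $\ga_n^{-1}o$, not at $o$, where every horofunction vanishes. Concretely it says $\norm{\Phi_\alpha(\ga_n)T^{(n)}_\alpha}\ge e^{-N_\alpha R}\norm{\Phi_\alpha(\ga_n)}$.

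Hence $\xi$ is represented by $\Phi_\alpha(\ga_n)T^{(n)}_\alpha/\norm{\Phi_\alpha(\ga_n)T^{(n)}_\alpha}$, which subconverges to $S_\alpha T_\alpha/\norm{S_\alpha T_\alpha}$. This limit is rank one with image $\zeta_\alpha(x)$ (Lemma~\ref{lem:rank one limits in Valpha}). So it equals $S_\alpha U$ with $U$ orthogonal, and $\xi=\iota(x)$ by Proposition~\ref{prop:embedding of partial flags}(2).

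So it is $\ga_n$, not $\ga_n^{-1}$, that does the contracting, and no $KAK$ analysis of the $b_{x_m}$ is needed. Finally, ``conicality follows from the definition'' holds only for the shadow-based definition. For the convergence-group notion you need Proposition~\ref{prop:concon limit set for transverse groups}(2), or you can read it off from \cite{CZZ2024} once $\mu$ lives on $\La_\theta(\Ga)$.
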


\subsection{Outline of Paper} The first two sections of the paper are expository. In Section~\ref{sec:PS systems}  we recall  the definition and some properties of abstract Patterson--Sullivan systems, which were introduced in our earlier work \cite{KimZimmer1}. In Section~\ref{sec: notation for ss groups},  we fix the notation involving semisimple Lie groups that we will use throughout the paper. 

In Section~\ref{sec:compactifications and PS measures}, we precisely define the compactifications $\overline{X}^\theta$ and establish some basic properties. In Section~\ref{sec:shadows and cc limit set}, we define shadows in these compactifications and use them to introduce the contracting conical limit set of a discrete subgroup. In Section~\ref{sec:verifying the PS axioms}, we show that Patterson--Sullivan measures on these compactifications satisfy the axioms of abstract Patterson--Sullivan systems and prove Theorem~\ref{thm:uniqueness for transverse intro}.

\subsection*{Acknowledgements} Kim thanks the University of Wisconsin--Madison for hospitality during a visit in October 2025.
Zimmer was partially supported by a Sloan research fellowship and grants DMS-2105580 and
DMS-2452068 from the National Science Foundation. 
This material is based upon work supported by the National Science Foundation under Grant No. DMS-2424139, while the authors were in residence at the Simons Laufer Mathematical Sciences Institute in Berkeley, California, during the Spring 2026 semester.

\section{Patterson--Sullivan systems}\label{sec:PS systems} 

In this section, we recall the definition and some properties of abstract Patterson--Sullivan systems, which were introduced in our earlier work \cite{KimZimmer1}. The main idea in this previous work was to identify the key features of a group action on a probability space that allows one to extend the theory of Patterson--Sullivan measures. We note that a different framework for abstract Patterson--Sullivan-like measures was given in~\cite{BCZZ_coarse}.

Given a  compact metric space $M$, a subgroup $\Gamma < \mathsf{Homeo}(M)$, and $\kappa \geq 0$, a function $\sigma : \Ga \times M \to \mathbb{R}$ is called a \emph{$\kappa$-coarse-cocycle} if 
$$
   \left| \sigma(\ga_1 \ga_2, x) - \left( \sigma(\ga_1, \ga_2 x) + \sigma(\ga_2, x) \right) \right| \le \kappa
$$
  for any $\ga_1, \ga_2 \in \Ga$ and $x \in M$.
Given such a coarse-cocycle and $\delta \ge 0$, a Borel probability measure $\mu$ on $M$ is called \emph{coarse $(\Ga, \sigma,\delta)$-Patterson--Sullivan measure} if there exists $C\geq1$ such that for any $\ga \in \Ga$ the measures $\mu, \gamma_*\mu$ are absolutely continuous and 
   \begin{equation} \label{eqn.coarse PS meaasure intro}
   C^{-1}e^{ - \delta \sigma(\ga^{-1}, x)} \le \frac{d \ga_* \mu}{d \mu}(x) \le Ce^{- \delta \sigma(\ga^{-1}, x)} \quad \text{for } \mu\text{-a.e. } x \in M.
   \end{equation} 
When $C = 1$ and hence equality holds in Equation \eqref{eqn.coarse PS meaasure intro}, we call $\mu$ a \emph{$(\sigma,\delta)$-Patterson--Sullivan measure}.

Now we recall the definition of Patterson--Sullivan systems.

\begin{definition}\label{defn:PS systems}
A \emph{Patterson--Sullivan-system (PS-system) of dimension $\delta \ge 0$} consists of
\begin{itemize}
\item a coarse-cocycle $\sigma : \Gamma \times M \rightarrow \Rb$, 
\item a coarse $(\sigma,\delta)$-Patterson--Sullivan measure (PS-measure) $\mu$,
\item for each $\gamma \in \Gamma$, a number $\norm{\gamma}_\sigma \in \R$ called the \emph{$\sigma$-magnitude of $\gamma$}, and
\item for each $\gamma \in \Gamma$ and $R > 0$, a non-empty open set $\Oc_R(\gamma) \subset M$ called the \emph{$R$-shadow of $\gamma$}
\end{itemize}
such that:
\begin{enumerate}[label=(PS\arabic*)]
\item\label{item:coycles are bounded} For any $\ga \in \Ga$, there exists $c=c(\gamma) > 0$ such that $\abs{\sigma(\ga, x)}\leq  c(\gamma)$ for  $\mu$-a.e. $x\in M$. 
\item\label{item:almost constant on shadows} For every $R> 0$ there is a constant $C=C(R) > 0$ such that
$$
 \norm{\gamma}_\sigma - C  \leq \sigma(\gamma,x) \leq \norm{\gamma}_\sigma + C 
$$
for all $\gamma \in \Gamma$ and $\mu$-a.e. $x \in \gamma^{-1} \Oc_R(\gamma)$.
\item\label{item:empty Z intersection} If $\{\gamma_n\} \subset \Gamma$, $R_n \rightarrow +\infty$, $Z \subset M$ is compact, and $[M \smallsetminus \gamma_n^{-1}\Oc_{R_n}(\gamma_n)] \rightarrow Z$ with respect to the  Hausdorff distance, then for any $x \in Z$, there exists $g \in \Ga$ such that 
$$gx \notin Z.$$
\end{enumerate} 

We call the PS-system \emph{well-behaved} with respect to a collection
$$
\mathscr{H} := \{ \mathscr{H}(R) \subset \Ga : R \ge 0 \}
$$
 of non-increasing subsets of $\Ga$ if the following additional properties hold:
\begin{enumerate}[label=(PS\arabic*)]
\setcounter{enumi}{3}
\item\label{item:properness} $\Gamma$ is countable and for any $T > 0$, the set $\{ \gamma \in \mathscr{H}(0) : \norm{\gamma}_\sigma \leq T\}$ is finite. 

\item\label{item:baire} If $\{\gamma_n\} \subset \Gamma$, $R_n \rightarrow +\infty$, $Z \subset M$ is compact, and $[M \smallsetminus \gamma_n^{-1}\Oc_{R_n}(\gamma_n)] \rightarrow Z$ with respect to the  Hausdorff distance, then for any $h_1, \ldots, h_m \in \Ga$ and $x \in Z$, there exists $g \in \Ga$ such that
$$
g x \notin \bigcup_{i = 1}^m h_i Z.
$$

\item\label{item:shadow inclusion} If $R_1 \leq R_2$ and $\gamma \in \mathscr{H}(0)$, then $\Oc_{R_1}(\gamma) \subset \Oc_{R_2}(\gamma)$. 

\item\label{item:intersecting shadows} For any $R > 0$ there exist $C>0$ and $R'> 0$ such that: if $\alpha, \beta \in \mathscr{H}(R)$, $\norm{\alpha}_\sigma \leq \norm{\beta}_\sigma$, and $\Oc_R(\alpha) \cap \Oc_R(\beta) \neq \emptyset$, then 
$$
\Oc_R(\beta) \subset \Oc_{R'}(\alpha)
$$ 
and  
$$
\abs{\norm{\beta}_\sigma - (\norm{\alpha}_\sigma + \norm{\alpha^{-1}\beta}_\sigma)} \leq C. 
$$
\item \label{item:diam goes to zero ae} For every $R > 0$, there exists a set $M' \subset M$ of full $\mu$-measure such that 
$$
\lim_{n \rightarrow + \infty} {\rm diam} \Oc_R(\gamma_n) = 0
$$
whenever $\{\gamma_n\} \subset \mathscr{H}(R)$ is an escaping sequence and 
$$
x \in M' \cap  \bigcap_{n \ge 1} \Oc_R(\gamma_n).
$$
\end{enumerate} 
We call the collection $\mathscr{H}$ the \emph{hierarchy} of the Patterson--Sullivan system. 
\end{definition} 

The axioms above are meant to identify the key features of shadows in hyperbolic geometry. The notion of hierarchy is designed to include examples where $\Gamma$ acts on a metric space by isometries with ``contracting'' elements, but not every element is ``contracting.''  For instance, for the mapping class group action on the  Teichm\"uller space, pseudo-Anosov mapping classes are precisely ``contracting'' elements by Minsky's Contraction Theorem \cite{minsky1996quasi-projections}. In this case, the space $M$ on which Patterson--Sullivan measures are defined is the Gardiner--Masur boundary of the Teichm\"uller space. See~\cite[Section 10]{KimZimmer1}. 

PS-systems always satisfy a natural analogue of the Shadow Lemma. 

\begin{proposition}[Shadow Lemma, {\cite[Proposition 3.1]{KimZimmer1}}] \label{prop.shadowlemma} Let $(M, \Ga, \sigma, \mu)$ be a PS-system of dimension $\delta \ge 0$. For any $R > 0$ sufficiently large there exists $C=C(R) > 1$ such that 
$$
\frac{1}{C} e^{-\delta \norm{\gamma}_\sigma} \leq \mu( \Oc_R(\gamma)) \leq C e^{-\delta \norm{\gamma}_\sigma} 
$$
for all $\ga \in \Ga$.
\end{proposition}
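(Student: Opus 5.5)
The plan is to prove the two inequalities separately. The upper bound uses only \ref{item:coycles are bounded} and \ref{item:almost constant on shadows}; the lower bound uses \ref{item:empty Z intersection} together with a compactness argument. Throughout, write $C_0$ for the coarse PS constant of $\mu$ and $\kappa$ for the coarse-cocycle constant of $\sigma$.

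\emph{Upper bound.} Fix $\gamma$. By the coarse PS property applied to $\gamma^{-1}$ and the set $\gamma^{-1}\Oc_R(\gamma)$,
$$
\mu(\Oc_R(\gamma)) = \gamma^{-1}_*\mu\big(\gamma^{-1}\Oc_R(\gamma)\big) \le C_0 \int_{\gamma^{-1}\Oc_R(\gamma)} e^{-\delta\sigma(\gamma,x)}\,d\mu(x).
$$
By \ref{item:almost constant on shadows}, $\sigma(\gamma,x)\ge \norm{\gamma}_\sigma - C(R)$ for $\mu$-a.e. $x$ in this set. Hence $\mu(\Oc_R(\gamma))\le C_0 e^{\delta C(R)} e^{-\delta\norm{\gamma}_\sigma}$, and this holds for every $R$.

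\emph{Lower bound.} The same computation, now using $\sigma(\gamma,x)\le\norm{\gamma}_\sigma + C(R)$, gives
$$
\mu(\Oc_R(\gamma)) \ge C_0^{-1}e^{-\delta C(R)}e^{-\delta\norm{\gamma}_\sigma}\,\mu\big(\gamma^{-1}\Oc_R(\gamma)\big).
$$
So it suffices to show that there exist $R_0$ and $\epsilon>0$ with $\mu(\gamma^{-1}\Oc_R(\gamma))\ge\epsilon$ for all $\gamma\in\Gamma$ and all $R\ge R_0$.

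Suppose this fails. Then there are $\gamma_n\in\Gamma$ and $R_n\to+\infty$ with $\mu(\gamma_n^{-1}\Oc_{R_n}(\gamma_n))\to 0$. We cannot use \ref{item:shadow inclusion} here, since it is not assumed in the non-well-behaved setting; this is why we allow $R_n$ to vary. Pass to a subsequence so that the closed sets $M\smallsetminus\gamma_n^{-1}\Oc_{R_n}(\gamma_n)$ Hausdorff-converge to a compact set $Z$; these sets are nonempty for large $n$, since their measures tend to $1$. Weak-$*$ upper semicontinuity on closed sets, applied along the Hausdorff limit, gives
$$
\mu(Z)\ge\limsup_{n}\mu\big(M\smallsetminus\gamma_n^{-1}\Oc_{R_n}(\gamma_n)\big)=1.
$$
To make this precise, apply it to the closed $\epsilon$-neighborhoods of $Z$, which eventually contain these sets, and then intersect over $\epsilon$. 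Thus $\mu(Z)=1$, so $\operatorname{supp}\mu\subset Z$. By \ref{item:coycles are bounded}, every $g_*\mu$ is equivalent to $\mu$, so $\operatorname{supp}\mu$ is a nonempty $\Gamma$-invariant closed subset of $Z$. Taking $x\in\operatorname{supp}\mu$, we get $gx\in\operatorname{supp}\mu\subset Z$ for every $g\in\Gamma$, which contradicts \ref{item:empty Z intersection}.

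The main point requiring care is this compactness step: one must check that the Hausdorff limit really captures full measure, which is the upper semicontinuity argument above. One must also check that $R$ "sufficiently large" is uniform, and this is exactly what the contradiction provides. The constant in the statement is then $C=C_0e^{\delta C(R)}\epsilon^{-1}$.
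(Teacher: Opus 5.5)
Your proof is correct, and it is essentially the argument behind \cite[Proposition 3.1]{KimZimmer1}, which this paper cites without proof: the upper bound comes from (PS2) and the density bounds, and the lower bound is reduced to a uniform estimate $\mu(\gamma^{-1}\Oc_R(\gamma))\ge\epsilon$ for $R\ge R_0$, proved by contradiction using a full-measure Hausdorff limit $Z$ and (PS3). Your ending via the $\Gamma$-invariant closed set $\operatorname{supp}\mu\subset Z$ is a harmless variant of a finite-subcover argument; two points are only cosmetic: mutual absolute continuity of $g_*\mu$ and $\mu$ comes from the definition of a coarse PS-measure, not from (PS1), and $\mu(Z)=1$ follows from monotonicity and continuity from above, with no weak-$*$ semicontinuity involved.
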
 

Recall that in hyperbolic geometry, $x \in \partial \Hb^n$ is a \emph{conical limit point} of a discrete subgroup $\Gamma < \Isom(\Hb^n)$ if there exist $R > 0$ and an escaping sequence $\{ \gamma_n\} \subset \Gamma$ such that $x$ is contained in the $R$-shadow of each $\gamma_n$. Further, the set of conical limit point has full measure if and only if the Poincar\'e series diverges, i.e.
$$
\sum_{\gamma \in \Gamma} e^{-\delta_{\Hb^n}(\Gamma) \dist(o,\gamma o)} = +\infty 
$$
by the classical Hopf--Tsuji--Sullivan dichotomy \cite{Tsuji_potential,Hopf_ergodic,Sullivan_density,AS_rational,Roblin_ergodic}. 

In a similar fashion, for well-behaved PS-systems with respect to the trivial hierarchy, divergence of the ``Poincar\'e series'' at the critical exponent implies that the set of ``conical limit points'' has positive measure. 

\begin{theorem}[{\cite[Theorem 4.1]{KimZimmer1}}] \label{thm:div => pos meas conical} Let $(M, \Ga, \sigma, \mu)$ be a PS-system of dimension $\delta \ge 0$. If $(M, \Ga, \sigma, \mu)$ is well-behaved with respect to the trivial hierarchy $\mathscr{H}(R) \equiv \Gamma$ and 
$$
\sum_{\gamma \in \Gamma} e^{-\delta \norm{\gamma}_\sigma} = +\infty, 
$$
then the set 
$$
E:=\left\{ x \in M : \exists \{\gamma_n\} \subset \Ga \text{ escaping and } R > 0 \text{ such that } x \in  \bigcap_{n \ge 1} \Oc_R(\gamma_n) \right\} 
$$
has positive $\mu$-measure. 
\end{theorem}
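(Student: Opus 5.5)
The plan is to run the classical Borel--Cantelli argument with quasi-independence (the Kochen--Stone lemma), with the shadow lemma of Proposition~\ref{prop.shadowlemma} providing the measure estimates and axiom \ref{item:intersecting shadows} providing the quasi-independence. Fix $R>0$ large enough that Proposition~\ref{prop.shadowlemma} applies at $R$ and also at the constant $R'=R'(R)$ from \ref{item:intersecting shadows} (enlarging $R'$ is allowed by \ref{item:shadow inclusion}). Since $\Gamma$ is countable and \ref{item:properness} holds with $\mathscr{H}(0)=\Gamma$, enumerate $\Gamma=\{\gamma_1,\gamma_2,\dots\}$ so that $\norm{\gamma_n}_\sigma$ is non-decreasing, and set $O_n:=\Oc_R(\gamma_n)$. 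Any point lying in infinitely many $O_n$ lies in $\Oc_R$ of infinitely many distinct elements, and such a sequence is escaping by \ref{item:properness}. Hence
$$
\limsup_{n} O_n \subset E,
$$
and it suffices to show $\mu(\limsup_n O_n)>0$.

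\textbf{Step 1 (divergence).} The lower bound in the shadow lemma gives $\mu(O_n)\ge C^{-1}e^{-\delta\norm{\gamma_n}_\sigma}$, so $\sum_n \mu(O_n)=+\infty$ by the divergence hypothesis.

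\textbf{Step 2 (quasi-independence).} Suppose $\norm{\alpha}_\sigma\le \norm{\beta}_\sigma$ and $\Oc_R(\alpha)\cap\Oc_R(\beta)\neq\emptyset$. By \ref{item:intersecting shadows},
$$
\mu(\Oc_R(\alpha)\cap \Oc_R(\beta)) \le \mu(\Oc_R(\beta)) \le C e^{-\delta\norm{\beta}_\sigma} \le C' e^{-\delta\norm{\alpha}_\sigma} e^{-\delta\norm{\alpha^{-1}\beta}_\sigma}.
$$
Writing $\eta=\alpha^{-1}\beta$, the same axiom gives $\norm{\eta}_\sigma\le \norm{\beta}_\sigma-\norm{\alpha}_\sigma+C$. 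Let $P(T):=\sum_{\norm{\gamma}_\sigma\le T}e^{-\delta\norm{\gamma}_\sigma}$. Summing over all pairs with $\norm{\alpha}_\sigma,\norm{\beta}_\sigma\le T$ yields
$$
\sum_{m,n\le N}\mu(O_m\cap O_n)\le 2C'\,P(T)\,P(T+C),
$$
where $T=\norm{\gamma_N}_\sigma$. Up to constants, $P(T)$ is comparable to $\sum_{n\le N}\mu(O_n)$ by the shadow lemma.

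\textbf{Step 3 (the main obstacle).} To close Step 2 we need the doubling-type bound $P(T+C)\le C''P(T)$ for large $T$. I would first try to get it from the shadow lemma together with a bounded-multiplicity statement: by \ref{item:intersecting shadows}, shadows of elements whose magnitudes lie in a fixed window $[t,t+1]$ and which pairwise intersect differ by elements $\eta$ of bounded magnitude. By \ref{item:properness} there are only finitely many such $\eta$, so these shadows have uniformly bounded overlap. Consequently $\sum_{t\le\norm{\gamma}_\sigma\le t+1}e^{-\delta\norm{\gamma}_\sigma}$ is uniformly bounded, which gives $P(T+C)\le P(T)+O(1)$. This suffices because $P(T)\to\infty$.

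\textbf{Step 4 (conclusion).} The Kochen--Stone lemma then gives
$$
\mu\Big(\limsup_n O_n\Big)\ge \limsup_{N}\frac{\big(\sum_{n\le N}\mu(O_n)\big)^2}{\sum_{m,n\le N}\mu(O_m\cap O_n)}>0,
$$
hence $\mu(E)>0$.

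Two technical points need care. First, the exceptional null sets in \ref{item:almost constant on shadows}, which underlie the shadow lemma, must be handled, but countably many null sets are harmless. Second, the constants $C,R'$ must be fixed before choosing $R$, which is why $R$ is taken large enough at the outset.
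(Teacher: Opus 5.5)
Your argument is correct, and it is the standard quasi-independence (Kochen--Stone) argument used for this result in \cite{KimZimmer1}; the present paper only quotes it. The shadow lemma gives divergence of $\sum_n \mu(O_n)$, the additivity half of \ref{item:intersecting shadows} gives $\mu(\Oc_R(\alpha)\cap\Oc_R(\beta)) \lesssim e^{-\delta\norm{\alpha}_\sigma}e^{-\delta\norm{\alpha^{-1}\beta}_\sigma}$, and \ref{item:properness} with \ref{item:intersecting shadows} gives the bounded-overlap estimate on magnitude windows.

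Two small points need tidying. The bound $\norm{\alpha^{-1}\beta}_\sigma \le T + C$ uses that magnitudes are bounded below, which holds by \ref{item:properness} after enlarging the constant. Comparing $\sum_{n\le N}\mu(O_n)$ with $P(\norm{\gamma_N}_\sigma)$ requires either choosing $N$ at the end of a block of equal magnitudes or absorbing the ties with your window estimate.
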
 

\begin{remark} In many particular examples, one can prove that if $g \in \Gamma$ and $R > 0$, then there exists $R' > 0$ such that 
$$
g \Oc_R(\gamma) \subset \Oc_{R'}(g \gamma)
$$
for all $\gamma \in \Gamma$. In this case, the set $E$ is Theorem~\ref{thm:div => pos meas conical} is $\Gamma$-invariant and further has full $\mu$-measure. Indeed, if $\mu(E) < 1$, then the $\Gamma$-invariance implies that $\mu' := \frac{1}{\mu(M \smallsetminus E)} \mu( \cdot \cap (M \smallsetminus E))$ is also a PS-measure, but then applying Theorem~\ref{thm:div => pos meas conical} to this measure shows that $\mu'(E) > 0$ which is impossible. 

\end{remark}

\section{Notations for semisimple Lie groups}\label{sec: notation for ss groups}

In this section we fix the notation involving semisimple Lie groups that we will use throughout the paper. Of particular importance for our arguments are the linear representations fixed in Section~\ref{sec:linear representations}. 

Recall from the introduction that $\Gsf$ is a semisimple Lie group with finite center and no compact factors, $\mfg = \mfp + \mfk$ is a fixed Cartan decomposition of the Lie algebra, $\mfa \subset \mfp$ is a fixed Cartan subspace, and $\mfa^+ \subset \mfa$ is a fixed positive Weyl chamber. We use $\Sigma \subset \mfa^*$ to denote the set of restricted roots and use $\Delta \subset \mfa^*$ to denote the system of simple restricted roots corresponding  to the choice of $\mfa^+$. Then 
$$
\mfg = \mfg_0 \oplus \bigoplus_{\alpha \in \Sigma} \mfg_\alpha
$$
where 
$$
\mfg_\alpha = \{ X \in \mfg : [H,X] = \alpha(H)X \text{ for all } H \in \mfa\}.
$$
Let $\Sigma^+$ (resp. $\Sigma^-$) denote the restricted roots which are non-negative (respectively non-positive) linear combinations of elements of $\Delta$.

Recall that $\kappa : \Gsf \rightarrow \mfa^+$ denotes the Cartan projection. We fix a representative $w_0 \in \Ksf$ of the longest Weyl element which is of order $2$. Let $\opp := - {\rm Ad}_{w_0} : \fa \to \fa$ denote the \emph{opposition involution}. This map has the property that 
\begin{equation} \label{eqn:opp involution}
\kappa(g^{-1}) = \opp \kappa(g) \quad \text{for all} \quad g \in \Gsf. 
\end{equation}
The adjoint $\opp^*$ of the opposite involution preserves the set of simple roots and for a subset $\theta \subset \Delta$, we define 
$$
\opp^* \theta := \{ \opp^*\alpha : \alpha \in \theta \}.
$$

\subsection{Parabolic subgroups and flag manifolds} Given a non-empty $\theta \subset \Delta$, the associated parabolic subgroup $\Psf_\theta$ is the stabilizer under the adjoint action of the Lie algebra 
$$
\mathfrak{u}_\theta^+ : = \bigoplus_{\alpha \in \Sigma_\theta^+} \mfg_\alpha
$$
where $\Sigma_\theta^+ : = \Sigma^+ \smallsetminus {\rm span}(\Delta \smallsetminus \theta)$. We also set
$ \Asf := \exp \fa$ and $\Asf^+ := \exp \fa^+$, and denote by $\Nsf < \Psf_{\Delta}$ the unipotent radical of $\Psf_{\Delta}$.

The \emph{Furstenberg boundary} and general \emph{$\theta$-boundary} are the quotient spaces 
$$
\Fc_{\Delta} := \Gsf / \Psf_{\Delta} \quad \text{and} \quad \Fc_{\theta} := \Gsf / \Psf_{\theta}.
$$
Two elements $x \in \Fc_\theta$ and $y \in \Fc_{\opp^* \theta}$ are \emph{transverse} if there exists $g \in \Gsf$ such that 
$$
x = g \Psf_\theta \quad \text{and} \quad y = g w_0 \Psf_{\opp^* \theta},
$$
equivalently $(x,y)$ is contained in the unique open $\Gsf$-orbit in $\Fc_\theta \times \Fc_{\opp^* \theta}$.

\subsection{Projection to the flag manifold}\label{sec:proj to flag manifolds}

For $g \in \Gsf$ with $\min_{\alpha \in \theta} \alpha(\kappa(g)) > 0$, we define 
$$
U_{\theta}(g) := k \Psf_\theta \in \Fc_{\theta}
$$
where $g$ has Cartan decomposition $g = k a \ell \in \Ksf \Asf^+ \Ksf$ (the condition on the roots implies that $U_{\theta}(g)$ is well-defined). 

\begin{observation}\label{obs:projection of inverse} If $g \in \Gsf$ has Cartan decomposition $g = k a \ell \in \Ksf \Asf^+ \Ksf$ and $\min_{\alpha \in \theta} \alpha(\kappa(g)) > 0$, then $\min_{\alpha \in \opp^* \theta} \alpha(\kappa(g^{-1})) > 0$ and
$$
U_{\opp^* \theta}(g^{-1}) = \ell^{-1} w_0 \Psf_{\opp^* \theta}. 
$$
\end{observation}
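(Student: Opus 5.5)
The idea is to write down an explicit Cartan decomposition of $g^{-1}$ coming from the one of $g$. From $g = k a \ell$ we get $g^{-1} = \ell^{-1} a^{-1} k^{-1}$. The middle factor $a^{-1}$ lies in $\Asf^{-} = \exp(-\mfa^+)$ rather than in $\Asf^+$, so we conjugate it by the longest Weyl element. Since $w_0 \in \Ksf$ has order $2$, we can write
$$
g^{-1} = (\ell^{-1} w_0)\,(w_0 a^{-1} w_0)\,(w_0 k^{-1}).
$$
By definition of the opposition involution, $w_0 a^{-1} w_0 = \exp({\rm Ad}_{w_0}(-\kappa(g))) = \exp(\opp \kappa(g))$. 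Because $-{\rm Ad}_{w_0}$ maps $\mfa^+$ to itself, this element lies in $\Asf^+$. The outer factors $\ell^{-1}w_0$ and $w_0 k^{-1}$ lie in $\Ksf$. Hence this is a Cartan decomposition of $g^{-1}$, and it recovers Equation~\eqref{eqn:opp involution}: $\kappa(g^{-1}) = \opp\kappa(g)$.

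Next we check the root condition. For $\beta = \opp^*\alpha$ with $\alpha \in \theta$, we have $\beta(\kappa(g^{-1})) = (\opp^*\alpha)(\opp \kappa(g)) = \alpha(\opp^2 \kappa(g)) = \alpha(\kappa(g))$, using that $\opp$ is an involution. Therefore
$$
\min_{\beta\in\opp^*\theta}\beta(\kappa(g^{-1})) = \min_{\alpha\in\theta}\alpha(\kappa(g)) > 0.
$$

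Finally, $U_{\opp^*\theta}(g^{-1})$ is well defined, meaning it is independent of the chosen Cartan decomposition; this is the well-definedness remark in Section~\ref{sec:proj to flag manifolds}. So we may compute it using the decomposition above. Its left $\Ksf$-factor is $\ell^{-1}w_0$, which gives $U_{\opp^*\theta}(g^{-1}) = \ell^{-1}w_0\Psf_{\opp^*\theta}$.

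There is no real obstacle here. The only points needing care are two facts: that ${\rm Ad}_{w_0}$ maps $\mfa^+$ to $-\mfa^+$, so that $\opp$ preserves $\mfa^+$; and that $\opp^*$ pairs with $\opp$ as described.
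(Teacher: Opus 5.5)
Your proof is correct and is the paper's argument: the paper only notes that $g^{-1} = \ell^{-1} w_0 (w_0 a^{-1} w_0) w_0 k^{-1}$ is a Cartan decomposition of $g^{-1}$ and reads off the left $\Ksf$-factor. You additionally justify why the middle factor lies in $\Asf^+$ and how the root condition transfers through $\opp^*$. Incidentally, the paper writes the middle factor as $w_0 a w_0$, which is a typo; your $w_0 a^{-1} w_0 = \exp(\opp\kappa(g))$ is the correct expression.
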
 

\begin{proof} Notice that $g^{-1}$ has Cartan decomposition $g^{-1} = \ell^{-1} w_0 (w_0 a w_0) w_0 k^{-1}$. \end{proof} 

These projection maps have the following dynamical behavior (for a proof see for instance \cite[Section 4]{KLP_Anosov} or \cite[Proposition 2.3]{CZZ2024}). 

\begin{proposition}\label{prop:NS dynamics} If $\{g_n\} \subset \Gsf$, $x^+ \in \Fc_\theta$, and $x^- \in \Fc_{\opp^* \theta}$, then the following are equivalent: 
\begin{enumerate}
\item $g_n x \rightarrow x^+$ for all $x \in \Fc_\theta$ transverse to $x^-$ and the convergence is uniform on compact subsets.
\item $\min_{\alpha \in \theta} \alpha(\kappa(g_n))\rightarrow +\infty$, $U_\theta(g_n) \rightarrow x^+$, and $U_{\opp^* \theta}(g_n^{-1}) \rightarrow x^-$. 
\end{enumerate} 

\end{proposition}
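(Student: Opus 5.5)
The plan is to work with the Cartan decomposition $g_n = k_n a_n \ell_n$, where $a_n = e^{\kappa(g_n)}$, and pass freely to subsequences using compactness of $\Ksf$. Both conditions are stable under passing to subsequences in the following sense: each holds for the whole sequence iff every subsequence has a further subsequence for which it holds. For (2) this holds because $\Fc_\theta$, $\Fc_{\opp^*\theta}$ are compact. For (1) it holds because uniform convergence on compacts is checked by a sup over the compact set. So it suffices to argue along subsequences with $k_n \to k$ and $\ell_n \to \ell$.

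The basic local computation is the following. Let $\Nsf_\theta^- := \exp \mathfrak{u}_\theta^-$, where $\mathfrak{u}^-_\theta = \bigoplus_{\alpha\in\Sigma_\theta^+}\mfg_{-\alpha}$. Then $u \mapsto u\Psf_\theta$ is a diffeomorphism of $\Nsf_\theta^-$ onto the open cell $C := \{x \in \Fc_\theta : x \text{ transverse to } w_0\Psf_{\opp^*\theta}\}$, and
$$
a\, u\Psf_\theta = (a u a^{-1})\Psf_\theta, \qquad a = e^H,
$$
where $\mathrm{Ad}(e^H)$ scales $\mfg_{-\alpha}$ by $e^{-\alpha(H)}$. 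Every $\alpha \in \Sigma_\theta^+$ has a positive coefficient on some element of $\theta$, so $\alpha(H) \ge \min_{\beta\in\theta}\beta(H)$ for $H \in \mfa^+$. Hence if $\min_{\theta}\alpha(\kappa(g_n))\to\infty$, then $a_n x \to \Psf_\theta$ uniformly on compact subsets of $C$.

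\emph{(2) $\Rightarrow$ (1).} By Observation~\ref{obs:projection of inverse}, $U_{\opp^*\theta}(g_n^{-1}) = \ell_n^{-1}w_0\Psf_{\opp^*\theta} \to \ell^{-1}w_0\Psf_{\opp^*\theta} = x^-$, and $k\Psf_\theta = x^+$. Let $K'$ be a compact set of points transverse to $x^-$. Transversality is an open condition, so for large $n$ the set $\ell_n K'$ lies in a fixed compact subset of $C$. The local computation then gives $a_n \ell_n K' \to \Psf_\theta$ uniformly, hence $g_n K' \to k\Psf_\theta = x^+$ uniformly.

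\emph{(1) $\Rightarrow$ (2).} First I would show $\min_\theta \alpha(\kappa(g_n)) \to \infty$. If not, pass to a subsequence on which $\beta(\kappa(g_n))$ stays bounded for some $\beta\in\theta$, and $k_n\to k$, $\ell_n \to \ell$. Choose a small compact curve $c$ in $C$ of the form $t\mapsto \exp(tY)\Psf_\theta$ with $Y \in \mfg_{-\beta}$, and pick it so that $\ell_n^{-1}c$ consists of points transverse to $x^-$ for all large $n$; this is possible because the non-transverse locus is a proper closed subvariety. Then $a_n$ moves $c$ by a bounded conjugation along $\mfg_{-\beta}$, so $a_n c$ has diameter bounded below. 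Indeed, $a_n c$ remains a nondegenerate curve $t \mapsto \exp(te^{-\beta(\kappa(g_n))}Y)\Psf_\theta$ with uniformly bounded-below speed. Therefore $g_n(\ell_n^{-1}c)$ does not shrink to a point. The curves $\ell_n^{-1}c$ lie in a fixed compact set of points transverse to $x^-$, so this contradicts the uniform convergence in (1).

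Next, let $y^\pm$ be subsequential limits of $U_\theta(g_n)$ and $U_{\opp^*\theta}(g_n^{-1})$. By the implication already proved, $g_n x \to y^+$ for all $x$ transverse to $y^-$. Choosing $x$ transverse to both $x^-$ and $y^-$ (an open dense set) gives $y^+ = x^+$.

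The main obstacle is showing $y^- = x^-$. I would argue by contradiction: if $y^- \ne x^-$, there is a point $z$ transverse to $x^-$ but not transverse to $y^-$, and by openness a compact neighborhood $K'$ of $z$ consisting of points transverse to $x^-$. On $K'$, $g_n$ must converge uniformly to the single point $x^+$. On the other hand, in the coordinates $\ell_n z = w\Psf_\theta$-translates with $w$ in a smaller Bruhat cell, the dilation by $a_n$ of a small transverse curve through $\ell_n z$ (along a root space that $w$ conjugates into $\mfg_{\alpha}$ with $\alpha$ positive on $\theta$) is expanding rather than contracting. So $g_nK'$ cannot collapse, which is the desired contradiction. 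Making this Bruhat-cell expansion estimate uniform in $n$ is the step I expect to need the most care; if it becomes messy, I would instead apply the same reasoning to $g_n^{-1}$ acting on $\Fc_{\opp^*\theta}$, using that (1) forces the repelling point to be unique.
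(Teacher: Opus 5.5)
The paper does not prove this proposition; it defers to Kapovich--Leeb--Porti and Canary--Zhang--Zimmer, so your argument has to stand on its own. Three of your steps are sound: (2)$\Rightarrow$(1), the proof that $\min_{\alpha\in\theta}\alpha(\kappa(g_n))\to+\infty$, and $y^+=x^+$. One small repair is needed in the regularity step. A curve $t\mapsto\exp(tY)\Psf_\theta$ through $\Psf_\theta$ can lie entirely in the non-transverse locus of $\ell x^-$: in $\Pb(\Rb^3)$, take $Y=E_{21}$ and $\ell x^-=\langle e_1,e_2\rangle$. So base the curve at some $u\Psf_\theta\in C$ transverse to $\ell x^-$ and use $t\mapsto u\exp(tY)\Psf_\theta$. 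Since $\mathrm{Ad}(a_n)$ is non-expanding on $\mathfrak{u}_\theta^-$, $a_nua_n^{-1}$ stays in a compact set and your diameter bound survives.

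The genuine gap is the final step $y^-=x^-$, which carries the real content of (1)$\Rightarrow$(2). Write $\Omega_y\subset\Fc_\theta$ for the set of points transverse to $y\in\Fc_{\opp^*\theta}$. Two things are missing.
\begin{enumerate}
\item You assert without proof that $x^-\neq y^-$ yields some $z\in\Omega_{x^-}\smallsetminus\Omega_{y^-}$. This is the nontrivial fact that $\Omega_{x^-}\subset\Omega_{y^-}$ forces $x^-=y^-$.
\item The non-collapse argument is not carried out, and as phrased it rests on a false premise. Only the limit $\ell z$ lies outside the big cell $C$; $\ell_n z$ typically lies in $C$, so there is no lower Bruhat cell at $\ell_nz$ to expand along. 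Indeed $g_n$ may send $z$ itself to $x^+$: in $\SL(2,\Rb)$ take $g_n=\mathrm{diag}(e^n,e^{-n})r_n$ with $r_n$ the rotation by angle $e^{-n}$. Then $y^-=[e_2]$, but $g_n[e_2]\to[e_1]=x^+$.
\end{enumerate}
The uniformity in $n$ that you postpone is exactly what is missing. The fallback through $g_n^{-1}$ on $\Fc_{\opp^*\theta}$ is not available, since (1) gives no information about $g_n^{-1}$.

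Both pieces have short proofs.

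\emph{Non-collapse.} Test (1) on the moving points $z_n:=\ell_n^{-1}\ell z\to z$. Then $g_nz_n=k_na_n(\ell z)$, with $\ell z\in\Fc_\theta\smallsetminus C$. This set is closed, $\Asf$-invariant (as $\Asf$ fixes $w_0\Psf_{\opp^*\theta}$), and does not contain $\Psf_\theta$. So in a $\Ksf$-invariant metric, $g_nz_n$ stays a definite distance from $k_n\Psf_\theta\to x^+$. This contradicts uniform convergence on a compact neighborhood of $z$ in $\Omega_{x^-}$, and hence $\Omega_{x^-}\subset\Omega_{y^-}$.

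\emph{Rigidity fact.} Normalize $x^-=w_0\Psf_{\opp^*\theta}$ and let $S:=\{y:\Omega_{x^-}\subset\Omega_y\}$.
\begin{itemize}
\item $S$ is closed: if $z\in\Omega_{x^-}\smallsetminus\Omega_y$, then $gz\in\Omega_{x^-}\smallsetminus\Omega_{gy}$ for $g$ near $\id$.
\item $S$ is $\Asf$-invariant.
\item $S$ is contained in the open cell $\Nsf_\theta x^-$, where $\Nsf_\theta:=\exp\mathfrak{u}_\theta^+$, because $\Psf_\theta\in\Omega_{x^-}$.
\end{itemize}
Suppose $y=\exp(Y)x^-\in S$ with $Y\neq 0$, and take $H$ in the interior of $\mfa^+$. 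As $t\to+\infty$, the points $e^{tH}y=\exp(\mathrm{Ad}(e^{tH})Y)x^-$ leave every compact subset of that open cell. Yet they accumulate in $S$, which lies inside the cell, a contradiction. So $S=\{x^-\}$, and therefore $y^-=x^-$.
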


\subsection{The partial Iwasawa cocycle} 

The \emph{Iwasawa cocycle} $B_{\Delta}^{IW} : \Gsf \times \Fc_{\Delta} \to \fa$ is defined as follows: for $g \in \Gsf$ and $x \in \Fc_{\Delta}$, $B_{\Delta}^{IW}(g, x) \in \fa$ is the unique element such that
$$
gk \in \Ksf ( \exp B_{\Delta}^{IW}(g, x)) \Nsf
$$
for $k \in \Ksf$ such that $k \Psf_{\Delta} = x$ in $\Fc_{\Delta}$. 

For general $\theta \subset \Delta$, let 
$$
\mfa_\theta : = \{ H \in \mfa : \alpha(H) = 0 \text{ for all } \alpha \notin \theta\}.
$$
For $\alpha \in \Delta$, let $\omega_\alpha$ denote the (restricted) fundamental weight associated to $\alpha$.
Then $\{ \omega_\alpha|_{\mfa_\theta}\}_{\alpha \in \theta}$ is a basis for $\mfa_\theta^*$ and so there exists a unique projection 
\begin{equation*}
\pi_{\theta} : \fa \to \fa_{\theta}
\end{equation*}
satisfying 
\begin{equation}\label{eqn:defn of pi_theta to mfa_theta} 
\omega_\alpha \pi_\theta(H) = \omega_\alpha(H)
\end{equation} 
for all $H \in \mfa$ and $\alpha \in \theta$.

The \emph{partial Iwasawa cocycle} $B_{\theta}^{IW} : \Gsf \times \Fc_{\theta} \to \fa_{\theta}$ is defined as 
\begin{equation}\label{eqn:defn of BIW} 
B_{\theta}^{IW}(g, x) := \pi_{\theta} B_{\Delta}^{IW}(g, \tilde x)
\end{equation} 
for any $\tilde x \in \Fc_{\Delta}$ that projects to $x \in \Fc_{\theta}$ under the canonical projection $\Fc_{\Delta} \to \Fc_{\theta}$. The above definition is independent of the choice of $\tilde x$ and defines a cocycle \cite[Lemma 6.1]{Quint_PS}. 

Recall from the introduction that the partial Iwasawa cocycle can be used to define a notion of Patterson--Sullivan measures on the partial flag manifold $\Fc_\theta$ (see Definition~\ref{defn:Quints definition}).

\subsection{Linear Representations}\label{sec:linear representations}

Given a $d$-dimensional vector space $V$ endowed with an inner product and $g \in \SL(V)$, we let 
$$
\sigma_1(g)  \geq \cdots \geq \sigma_d(g)
$$
denote the singular values of $g$ with respect to the inner product and let $\norm{g} = \sigma_1(g)$ denote the operator norm.

 Throughout the paper,  for each $\alpha \in \Delta$ we fix an irreducible representation $\Phi_{\alpha} : \Gsf \to \SL(V_{\alpha})$ and a $\Phi_\alpha(\Ksf)$-invariant inner product on $V_\alpha$ with the following properties: 
\begin{enumerate}[label=(R\arabic*)]
\item\label{item:SVs of Tits repn} 
There exists $N_{\alpha} \in \Nb$ 
such that if $g \in \Gsf$, then 
$$
\log \norm{\Phi_\alpha(g)}= N_{\alpha} \omega_\alpha(\kappa(g)) \quad \text{and}\quad \log \frac{\sigma_1(\Phi_{\alpha}(g))}{\sigma_2(\Phi_{\alpha}(g))} = \alpha(\kappa(g)).
$$
\item\label{item:splitting for Tits repn}  There exists a $\Phi_\alpha(\Asf)$-invariant orthogonal splitting $V_\alpha = V_\alpha^+ \oplus V_\alpha^-$ such that $\dim V_\alpha^+ = 1$. Moreover, if  $H \in \mfa$ and  $v \in V_\alpha^+$, then 
$$
\Phi_\alpha(e^H)v = e^{N_{\alpha} \omega_\alpha(H)} v.
$$

\item\label{item:boundary maps of Tits repn} There exist $\Phi_\alpha$-equivariant boundary maps $\zeta_\alpha : \Fc_\alpha \rightarrow \Pb(V_\alpha)$ and $\zeta_\alpha^* : \Fc_{\opp^* \alpha} \rightarrow \Gr_{\dim V_\alpha-1}(V_\alpha)$ such that:
\begin{enumerate}
\item $\zeta_\alpha(\Psf_{\alpha}) = V_\alpha^+$ and $\zeta_\alpha^*(w_0 \Psf_{\opp^* \alpha } ) = V_\alpha^-$.
\item $x \in \Fc_\alpha$ and $y \in \Fc_{\opp^* \alpha}$ are transverse if and only if $\zeta_\alpha(x)$ and $\zeta_\alpha^*(y)$ are transverse. 
\end{enumerate} 

\end{enumerate} 

\begin{remark} 

   Such representations exist due to Tits \cite[Theorem 7.2]{Tits_representations}. Indeed, Tits proved the first claim in Property \ref{item:SVs of Tits repn}. For a proof of the second assertion in Property~\ref{item:SVs of Tits repn} and Property~\ref{item:splitting for Tits repn}, see for instance \cite[Lemma 2.13]{Smilga_proper} and \cite[Sections 6.8, 6.9]{BQ_book}. For a proof of Property~\ref{item:boundary maps of Tits repn}, see for instance ~\cite[Section 3]{GGKW2017}.

\end{remark}

\begin{remark} 
   We abuse notation and when $\alpha \in \theta$, also often use $\zeta_\alpha$ to also denote the map $\Fc_\theta \rightarrow \Pb(V_\alpha)$ obtained by precomposing $\zeta_\alpha: \Fc_\alpha \rightarrow \Pb(V_\alpha)$ with the natural projection $\Fc_\theta \rightarrow \Fc_\alpha$. Likewise, we also use $\zeta_{\opp ^*\alpha}^*$ to denote the analogous map defined on $\Fc_{\opp^* \theta}$. 
\end{remark}

The following lemma relates the projections to the flag manifolds introduced in Section~\ref{sec:proj to flag manifolds} to these representations. 

\begin{lemma}\label{lem:rank one limits in Valpha} Fix $\alpha \in \Delta$ and  assume $\{g_n\} \subset \Gsf$ is such that  $\alpha(\kappa(g_n)) \rightarrow +\infty$, $U_\alpha(g_n) \rightarrow x$,  and $U_{\opp^* \alpha }(g_n^{-1}) \rightarrow y$. Then any limit point of 
$$
\frac{1}{\norm{\Phi_\alpha(g_n)}} \Phi_\alpha(g_n) \quad \text{in } \End(V_{\alpha})
$$
has image $\zeta_\alpha(x)$ and kernel $\zeta_\alpha^*(y)$. 
\end{lemma}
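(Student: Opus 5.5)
Write $g_n = k_n a_n \ell_n$ in $\Ksf \Asf^+ \Ksf$ with $a_n = e^{\kappa(g_n)}$. Since $\Phi_\alpha(\Ksf)$ preserves the inner product, the operator norm satisfies $\norm{\Phi_\alpha(g_n)} = \norm{\Phi_\alpha(a_n)}$. This gives
$$
\frac{1}{\norm{\Phi_\alpha(g_n)}} \Phi_\alpha(g_n) = \Phi_\alpha(k_n)\, \frac{\Phi_\alpha(a_n)}{\norm{\Phi_\alpha(a_n)}}\, \Phi_\alpha(\ell_n).
$$
Any limit point is obtained along a subsequence. Since $\Ksf$ is compact, we may pass to a further subsequence with $k_n \to k$ and $\ell_n \to \ell$. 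The plan is to compute the limit of the middle factor, then read off image and kernel using the boundary maps $\zeta_\alpha$ and $\zeta^*_\alpha$.

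\emph{Step 1: the middle factor.} By \ref{item:splitting for Tits repn}, $\Phi_\alpha(a_n)$ acts on $V_\alpha^+$ by $e^{N_\alpha\omega_\alpha(\kappa(g_n))}$. By \ref{item:SVs of Tits repn}, this number equals $\norm{\Phi_\alpha(a_n)}$. The operator $\Phi_\alpha(a_n)$ preserves the orthogonal splitting $V_\alpha = V_\alpha^+ \oplus V_\alpha^-$, and it is self-adjoint, being the image of an element of $\exp \mfp$ under a representation with $\Ksf$-invariant inner product. Hence its singular values are its eigenvalues, and the top one, $\sigma_1$, is attained on $V_\alpha^+$. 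The remaining singular values are those of the restriction to $V_\alpha^-$, and all of them are at most $\sigma_2(\Phi_\alpha(a_n))$. By \ref{item:SVs of Tits repn}, $\sigma_2/\sigma_1 = e^{-\alpha(\kappa(g_n))} \to 0$. So $\Phi_\alpha(a_n)/\norm{\Phi_\alpha(a_n)}$ converges to the orthogonal projection $P$ onto $V_\alpha^+$ with kernel $V_\alpha^-$. Therefore every limit point equals $T = \Phi_\alpha(k) P \Phi_\alpha(\ell)$, which has image $\Phi_\alpha(k)V_\alpha^+$ and kernel $\Phi_\alpha(\ell)^{-1}V_\alpha^-$.

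\emph{Step 2: identify with the flags.} We have $U_\alpha(g_n) = k_n\Psf_\alpha$. Since $\alpha(\kappa(g_n))$ is eventually positive, this is well defined, and it converges to $k\Psf_\alpha$; hence $x = k\Psf_\alpha$. Equivariance and \ref{item:boundary maps of Tits repn}(a) then give
$$
\zeta_\alpha(x) = \Phi_\alpha(k)\zeta_\alpha(\Psf_\alpha) = \Phi_\alpha(k)V_\alpha^+.
$$
By Observation~\ref{obs:projection of inverse}, $U_{\opp^*\alpha}(g_n^{-1}) = \ell_n^{-1}w_0\Psf_{\opp^*\alpha}$, which converges to $\ell^{-1}w_0\Psf_{\opp^*\alpha}$. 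So $y = \ell^{-1}w_0\Psf_{\opp^*\alpha}$, and
$$
\zeta^*_\alpha(y) = \Phi_\alpha(\ell)^{-1}\zeta^*_\alpha(w_0\Psf_{\opp^*\alpha}) = \Phi_\alpha(\ell)^{-1}V_\alpha^-.
$$
These are exactly the image and kernel of $T$ found in Step 1.

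\emph{Main obstacle.} The only delicate point is Step 1: justifying that $\sigma_1(\Phi_\alpha(a_n))$ is attained exactly on $V_\alpha^+$ and that the restriction to $V_\alpha^-$ is controlled by $\sigma_2$. Both follow from the orthogonality and self-adjointness noted there, so the argument should go through. A minor point is that $U_\alpha$ and $U_{\opp^*\alpha}$ are continuous functions of the $\Ksf$-components when the roots are positive; this is needed to identify the limits $x$ and $y$ with $k$ and $\ell$.
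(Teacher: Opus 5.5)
Your proposal is correct and is essentially the paper's proof: Cartan decomposition $g_n = k_n a_n \ell_n$, compactness of $\Ksf$, convergence of $\Phi_\alpha(a_n)/\norm{\Phi_\alpha(a_n)}$ to the projection with image $V_\alpha^+$ and kernel $V_\alpha^-$ via Properties~\ref{item:SVs of Tits repn} and~\ref{item:splitting for Tits repn}, and identification of $\Phi_\alpha(k)V_\alpha^+$ and $\Phi_\alpha(\ell)^{-1}V_\alpha^-$ with $\zeta_\alpha(x)$ and $\zeta_\alpha^*(y)$ via Property~\ref{item:boundary maps of Tits repn} and Observation~\ref{obs:projection of inverse}; you simply spell out the middle-factor limit that the paper states in one line. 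One point in Step 1 needs fixing: $\Ksf$-invariance of the inner product does not by itself force $\Phi_\alpha(\exp \mfp)$ to act by self-adjoint operators, so that justification is invalid as written, but you do not need self-adjointness, because the orthogonality of the $\Phi_\alpha(\Asf)$-invariant splitting in Property~\ref{item:splitting for Tits repn} already shows that the singular values of $\Phi_\alpha(a_n)$ are $\sigma_1$ (realized on $V_\alpha^+$) together with those of $\Phi_\alpha(a_n)|_{V_\alpha^-}$, which are therefore bounded by $\sigma_2$.
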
 

\begin{proof} Suppose that $\frac{1}{\norm{\Phi_\alpha(g_n)}} \Phi_\alpha(g_n) \rightarrow T$. Fix a Cartan decomposition $g_n = k_n a_n \ell_n \in \Ksf \Asf^+ \Ksf$. Passing to subsequences we can suppose that $k_n \rightarrow k$, $\ell_n \rightarrow \ell$, and $\frac{1}{\norm{\Phi_\alpha(a_n)}} \Phi_\alpha(a_n) \rightarrow S$. Then $x = k \Psf_{\alpha}$   and $T = \Phi_\alpha(k) S \Phi_\alpha(\ell)$. Further,  Observation~\ref{obs:projection of inverse} implies that $y = \ell^{-1} w_0 \Psf_{\opp^* \alpha }$.

By Properties~\ref{item:SVs of Tits repn} and~\ref{item:splitting for Tits repn}, $S$ has image $V_\alpha^+$ and kernel $V_\alpha^-$. Then by Property~\ref{item:boundary maps of Tits repn}, $T$ has image 
$$
\Phi_\alpha(k)V_\alpha^+ = \zeta_\alpha(k \Psf_{\alpha}) = \zeta_\alpha(x)
$$
and kernel 
\begin{equation*}
\Phi_\alpha(\ell)^{-1}V_\alpha^- = \zeta_\alpha^*(\ell^{-1} w_0 \Psf_{\opp^* \alpha }) = \zeta_\alpha^*(y). \qedhere
\end{equation*}
\end{proof}

\subsection{Irreducible actions} 

A subgroup $\Hsf < \SL(V)$ is called \emph{irreducible} if there is no non-trivial and proper subspace of $V$ invariant under $\Hsf$, and is called \emph{strongly irreducible} if any finite index subgroup of $\Hsf$ is irreducible. We transfer these notions to $\Gsf$ using the $\Phi_{\alpha}$'s.

\begin{definition} A subgroup $\Ga < \Gsf$ is \emph{$(\Phi_{\alpha})_{\alpha \in \theta}$-irreducible} if $\Phi_{\alpha}(\Ga) < \SL(V_{\alpha})$ is irreducible for all $\alpha \in \theta$, and \emph{strongly  $(\Phi_{\alpha})_{\alpha \in \theta}$-irreducible}  if any finite index subgroup of $\Ga$ is  $(\Phi_{\alpha})_{\alpha \in \theta}$-irreducible. 
\end{definition} 

\begin{remark}  \label{remark:Zdense irreducible}
   Notice that a Zariski dense subgroup is strongly  $(\Phi_{\alpha})_{\alpha \in \Delta}$-irreducible. \end{remark} 

We will use the following observation several times. 
 
\begin{lemma} \label{lem:rho irr implies flag irr}
Suppose $\Ga < \Gsf$ is strongly  $(\Phi_{\alpha})_{\alpha \in \theta}$-irreducible. If 
\begin{itemize}
\item $\alpha_1,\dots, \alpha_m$ are (possibly non-distinct) elements of $\theta$, 
\item $v_i \in V_{\alpha_i} \smallsetminus\{0\}$ for $i=1,\dots,m$, and 
\item $W_i \subset V_{\alpha_i}$ is a proper linear subspace for $i=1,\dots, m$, 
\end{itemize} 
then there exists $\gamma \in \Gamma$ with 
$$
\Phi_{\alpha_i}(\gamma) v_i \notin W_i
$$
for $i=1,\dots, m$. 
\end{lemma}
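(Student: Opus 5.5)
The plan is a Zariski-closure argument combined with the classical fact that a strongly irreducible linear group cannot have a finite union of proper subspaces as an invariant-type obstruction. For each $i$ consider the set
$$
B_i := \{ g \in \Gsf : \Phi_{\alpha_i}(g) v_i \in W_i\},
$$
and let $A_i := \overline{\Gamma \cap B_i}^{\rm Zar}$ inside the Zariski closure $\Hsf := \overline{\Phi(\Gamma)}^{\rm Zar}$, where we work with the product representation $\Phi := \bigoplus_{\alpha \in \theta} \Phi_\alpha$ (so that all conditions are simultaneously algebraic in one algebraic group). We must show $\Gamma \not\subset \bigcup_i B_i$. Suppose otherwise; then $\Hsf = \bigcup_i \{ h \in \Hsf : h_{\alpha_i} v_i \in W_i\}$ is a finite union of Zariski closed subsets, so the identity component $\Hsf^0$ (irreducible as a variety) is contained in one of them, say $\{h : h_{\alpha_1} v_1 \in W_1\}$.

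Next I use strong irreducibility. The group $\Gamma^0 := \Gamma \cap \Phi^{-1}(\Hsf^0)$ has finite index in $\Gamma$ (since $\Hsf^0$ has finite index in $\Hsf$), hence is $(\Phi_\alpha)_{\alpha\in\theta}$-irreducible by hypothesis. Now $\Phi_{\alpha_1}(\Gamma^0) v_1 \subset W_1$, so the linear span of the orbit $\Phi_{\alpha_1}(\Gamma^0) v_1$ is a nonzero $\Phi_{\alpha_1}(\Gamma^0)$-invariant subspace contained in the proper subspace $W_1$. This contradicts irreducibility of $\Phi_{\alpha_1}(\Gamma^0)$, finishing the proof.

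The only step needing care is the reduction from $\Gamma$ to $\Hsf^0$: the coset structure. Since $\Gamma \subset \bigcup_i B_i$, taking Zariski closures gives $\Hsf \subset \bigcup_i \overline{B_i}$ with each $\{h\in \Hsf: h_{\alpha_i}v_i \in W_i\}$ Zariski closed (it is defined by polynomial equations in the matrix entries of $h$), and irreducibility of $\Hsf^0$ as a variety forces it into a single piece. I expect this to be the main (though mild) obstacle: one must ensure that the Zariski closure is taken in $\prod_{\alpha}\SL(V_\alpha)$ via $\Phi$ rather than in $\Gsf$ (which may not be algebraic), and that $\Gamma\cap\Phi^{-1}(\Hsf^0)$ is genuinely finite index, which holds because $\Phi(\Gamma)$ is Zariski dense in $\Hsf$ and so meets each of the finitely many components.
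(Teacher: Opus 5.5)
Your argument is correct and is essentially the paper's proof: the paper takes the Zariski closure $\Hsf$, observes that $\Hsf^0$ is still irreducible on each $V_\alpha$, so each $\{h \in \Hsf^0 : \Phi_{\alpha_i}(h)v_i \notin W_i\}$ is a non-empty Zariski open set, and then uses irreducibility of $\Hsf^0$ and Zariski density of $\Ga \cap \Hsf^0$ to find $\gamma$ — this is the direct form of your contrapositive (working in $\prod_\alpha \SL(V_\alpha)$ via $\Phi$, as you do, is a harmless and slightly cleaner choice). One minor correction: $\Ga \cap \Phi^{-1}(\Hsf^0)$ has finite index simply because it is the preimage of the finite-index subgroup $\Hsf^0$; the fact that $\Phi(\Ga)$ meets every component gives surjectivity onto $\Hsf/\Hsf^0$, which is not what you need there.
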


\begin{proof}
Let $\Hsf < \Gsf$ be the Zariski closure of $\Ga$ in $\Gsf$ and let $\Hsf^0 < \Hsf$ be the identity component. Since $\Ga$ is strongly $(\Phi_{\alpha})_{\alpha \in \theta}$-irreducible, $\Hsf^0$ is $(\Phi_{\alpha})_{\alpha \in \theta}$-irreducible as well. Then for each $1 \le i \le m$, the set
$$
\Oc_i := \{ h \in \Hsf^0 : \Phi_{\alpha_i}(h) v_i \notin W_i \}
$$
is non-empty and Zariski open. Since $\Ga \cap \Hsf^0$ is Zariski dense in $\Hsf^0$, there exists
$$
\ga \in \Ga \cap \bigcap_{i = 1}^m \Oc_i,
$$
which satisfies the desired properties.
\end{proof}

\subsection{Limit sets and transverse groups}\label{sec:transverse groups}

In this section we recall the definition a transverse group.

Given a discrete subgroup $\Gamma < \Gsf$, a point $x \in \Fc_{\theta}$ is a \emph{limit point} of $\Ga$ if there exists an escaping sequence $\{ \ga_n \} \subset \Ga$ with $\min_{\alpha \in \theta} \alpha(\kappa(\gamma_n)) \to + \infty$ and  $U_{\theta}(\ga_n) \rightarrow x$. The \emph{limit set of $\Gamma$}, denoted by
$$
\La_{\theta}(\Ga) \subset \Fc_{\theta},
$$
is the set of all limit points of $\Ga$. Proposition~\ref{prop:NS dynamics} implies that the points in $\Lambda_\theta(\Gamma)$ are exactly the points $x^+ \in \Fc_\theta$ where there exists a sequence $\{\gamma_n\} \subset \Gamma$ and a non-empty open set $\Uc \subset \Fc_\theta$ such that $\gamma_n x \rightarrow x^+$ for all $x \in \Uc$, uniformly on compact subsets.

A discrete subgroup  $\Ga < \Gsf$ is \emph{$\Psf_{\theta}$-transverse} if $\min_{\alpha \in \theta} \alpha(\kappa(\gamma_n)) \to + \infty$ for any sequence $\{ \ga_n \} \subset \Ga$ of distinct elements and any two distinct points in $\La_{\theta \cup \opp^* \theta}(\Ga)$ are transverse.

Sometimes the definition of transverse group includes the assumption that $\theta$ is symmetric (i.e., $\theta = \opp^* \theta)$. However, as the next observation demonstrates, this results in no loss of generality. 

\begin{observation}\label{obs:symmetry theta} $\Gamma < \Gsf$ is $\Psf_{\theta}$-transverse if and only if $\Gamma$ is $\Psf_{\theta \cup \opp^*\theta}$-transverse. Moreover, in this case the the projection $\Fc_{\theta \cup \opp^* \theta} \rightarrow \Fc_\theta$ induces a homeomorphism $\Lambda_{\theta \cup \opp^* \theta}(\Gamma) \rightarrow \Lambda_\theta(\Gamma)$.
\end{observation}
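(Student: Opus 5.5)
The plan is to handle the regularity condition and the transversality condition separately, since the definition of $\Psf_{\theta}$-transverse has exactly these two parts.

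\emph{Regularity.} By \eqref{eqn:opp involution}, $\alpha(\kappa(\gamma^{-1})) = \alpha(\opp\kappa(\gamma)) = (\opp^*\alpha)(\kappa(\gamma))$. If $\{\gamma_n\}$ is a sequence of distinct elements, then so is $\{\gamma_n^{-1}\}$. Hence if $\min_{\alpha\in\theta}\alpha(\kappa(\gamma_n))\to+\infty$ holds along every sequence of distinct elements, applying it to $\{\gamma_n^{-1}\}$ gives $\min_{\alpha\in\opp^*\theta}\alpha(\kappa(\gamma_n))\to+\infty$. So divergence over $\theta$ is equivalent to divergence over $\theta\cup\opp^*\theta$; the reverse direction is trivial since $\theta \subset \theta\cup\opp^*\theta$.

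\emph{Transversality.} Since $\theta\cup\opp^*\theta$ is itself $\opp^*$-invariant, we have $(\theta\cup\opp^*\theta)\cup\opp^*(\theta\cup\opp^*\theta)=\theta\cup\opp^*\theta$. Thus the transversality condition in the definition refers to the same limit set $\La_{\theta\cup\opp^*\theta}(\Ga)$ for $\theta$ and for $\theta\cup\opp^*\theta$, and the two definitions coincide. This proves the "if and only if".

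\emph{The homeomorphism.} Write $\vartheta:=\theta\cup\opp^*\theta$ and let $p:\Fc_\vartheta\to\Fc_\theta$ be the projection.
\begin{enumerate}
\item First, $p$ maps $\La_\vartheta(\Ga)$ onto $\La_\theta(\Ga)$, because $p(U_\vartheta(g))=U_\theta(g)$ whenever both are defined. Given $x\in\La_\theta(\Ga)$ with $U_\theta(\gamma_n)\to x$, regularity shows $U_\vartheta(\gamma_n)$ is defined for large $n$. By compactness of $\Fc_\vartheta$ we may pass to a convergent subsequence, whose limit lies in $\La_\vartheta(\Ga)$ and projects to $x$. This gives surjectivity, and $p$ is clearly continuous.
\item Since $\La_\vartheta(\Ga)$ is compact and $\Fc_\theta$ is Hausdorff, it then suffices to prove injectivity.
\item Suppose $x\neq y$ in $\La_\vartheta(\Ga)$ with $p(x)=p(y)$. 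By transversality, $x$ and $y$ are transverse, i.e.\ $(x,y)$ lies in the open orbit $\Gsf\cdot(\Psf_\vartheta,w_0\Psf_{\vartheta})$ (using $\opp^*\vartheta=\vartheta$). Transversality in $\Fc_\vartheta$ implies transversality of the projections to $\Fc_\theta$ and $\Fc_{\opp^*\theta}$. Writing $x=g\Psf_\vartheta$ and $y=gw_0\Psf_\vartheta$, we get $p(x)=g\Psf_\theta$ and $p(y)=gw_0\Psf_\theta$.
\item Then $p(x)=p(y)$ would give $w_0\in\Psf_\theta$, which is false since $w_0$ maps $\mathfrak u^+_\theta$ into negative root spaces. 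This contradiction gives injectivity.
\end{enumerate}

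The main obstacle is this last injectivity step, namely justifying that transversality at level $\vartheta$ forces distinct images in $\Fc_\theta$. The concrete route is to check that $w_0\notin\Psf_\theta$ for nonempty $\theta$, since $\mathrm{Ad}(w_0)\mathfrak{g}_\alpha=\mathfrak{g}_{w_0\alpha}$ with $w_0\alpha\in\Sigma^-$.
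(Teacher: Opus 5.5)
Your proof is correct. The paper records this as an observation without proof, and your argument is the routine verification it leaves to the reader. Regularity transfers through $\kappa(g^{-1})=\opp\kappa(g)$. The transversality clause is literally the same condition, because $\opp^*(\theta\cup\opp^*\theta)=\theta\cup\opp^*\theta$. Injectivity of the projection on $\La_{\theta\cup\opp^*\theta}(\Ga)$ follows from transversality of distinct limit points together with $w_0\notin\Psf_\theta$. You use two facts tacitly: regularity along sequences of distinct elements implies it along escaping sequences, and $\La_{\theta\cup\opp^*\theta}(\Ga)$ is closed (a diagonal argument). Both are standard and leave no gap.
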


A $\Psf_{\theta}$-transverse group is called \emph{non-elementary} if $\# \La_{\theta}(\Ga) \ge 3$, in which case the natural $\Ga$-action on $\La_{\theta}(\Ga)$ is a minimal convergence action  and $\# \La_{\theta}(\Ga) = + \infty$, see \cite[Theorem 4.16]{KLP_Anosov} or \cite[Proposition 3.3]{CZZ2026}.

Two well-studied classes of transverse groups are Anosov groups and relatively Anosov groups. A $\Psf_{\theta}$-transverse group is  \emph{$\Psf_{\theta}$-Anosov} if the $\Ga$-action on $\La_{\theta}(\Ga)$ is a uniform convergence action, equivalently $\Gamma$ is word hyperbolic as an abstract group and there exists an equivariant homeomorphism from the Gromov boundary to the limit set $\La_{\theta}(\Ga)$ \cite{bowditch1998a-topological}. Likewise, a $\Psf_{\theta}$-transverse group is  \emph{$\Psf_{\theta}$-relatively Anosov} if the  $\Ga$-action on $\La_{\theta}(\Ga)$ is geometrically finite, equivalently if $\Gamma$ has the structure of a relatively hyperbolic group and there exists an equivariant homeomorphism from the associated Bowditch boundary to the limit set $\La_{\theta}(\Ga)$ \cite{Yaman2004topological}.


\section{Compactifications and Patterson--Sullivan measures}\label{sec:compactifications and PS measures}


In this section, we introduce vector-valued horofunction compactifications of the symmetric space $X = \Gsf/\Ksf$ associated to $\Gsf$, using Cartan projections. It turns out that they contain partial flag manifolds, and we also consider Patterson--Sullivan measures there.

Fix the basepoint $o:=\Ksf \in X$. The symmetric space distance is given by 
$$
\dist_X(go, ho) = \norm{\kappa(g^{-1}h)}
$$
where $\norm{\cdot}$ is some norm on $\mfa$. 
For $x = go$, define  $b_x : X \rightarrow \fa$ by 
$$
b_x(ho) = \kappa(h^{-1}g) - \kappa(g). 
$$

\begin{lemma} \label{lem:Lipschitz}
The maps $\{ b_x : x \in X\}$ are uniformly Lipschitz. 
\end{lemma}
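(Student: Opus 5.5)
The plan is to reduce the claim to the standard Lipschitz estimate for the Cartan projection,
$$
\norm{\kappa(g) - \kappa(h)} \le C \norm{\kappa(g^{-1}h)} \quad \text{for all } g,h \in \Gsf,
$$
or equivalently $\norm{\kappa(g_1 g_2) - \kappa(g_1)} \le C\norm{\kappa(g_2)}$. Here $C$ depends only on the choice of norm on $\mfa$, and $C=1$ when the norm is the $\Ksf$-invariant (Weyl group invariant) Euclidean one that gives the Riemannian distance.

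Granting this estimate, fix $x = go$. For $h_1 o, h_2 o \in X$ we have
$$
b_x(h_1 o) - b_x(h_2 o) = \kappa(h_1^{-1} g) - \kappa(h_2^{-1} g).
$$
Apply the estimate with $g_1 = h_2^{-1} g$ and $g_2 = g^{-1} h_2 h_1^{-1} g$, so that $g_1 g_2 = h_1^{-1} g$. This bounds the difference by $C\norm{\kappa(g^{-1} h_2 h_1^{-1} g)}$. The element $g^{-1}h_2h_1^{-1}g$ is conjugate to $h_2h_1^{-1}$, which is not directly what we want, so the substitution should be arranged differently. Instead, write $h_1^{-1} g = (h_1^{-1} h_2)(h_2^{-1} g)$ and use the left-multiplication version of the estimate, $\norm{\kappa(g_2 g_1) - \kappa(g_1)} \le C\norm{\kappa(g_2)}$. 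With $g_2 = h_1^{-1}h_2$ this gives
$$
\norm{b_x(h_1o) - b_x(h_2o)} \le C \norm{\kappa(h_1^{-1}h_2)} = C\, \dist_X(h_1 o, h_2 o).
$$
The constant is independent of $x$, which gives uniform Lipschitz. Well-definedness of $b_x$, meaning independence of the representatives $g$ and $h$, follows from the bi-$\Ksf$-invariance of $\kappa$.

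The left version of the estimate follows from the right version through $\kappa(g^{-1}) = \opp\kappa(g)$: $\opp$ is a linear isometry for a $W$-invariant norm, and it is bounded in general. The main step is therefore the estimate itself. I would prove it in one of two ways.

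The first route uses the representations $\Phi_\alpha$ from Section~\ref{sec:linear representations}. By \ref{item:SVs of Tits repn}, $N_\alpha\omega_\alpha(\kappa(g)) = \log\norm{\Phi_\alpha(g)}$. Submultiplicativity of the operator norm gives
$$
\log\norm{\Phi_\alpha(g_1g_2)} \le \log\norm{\Phi_\alpha(g_1)} + \log\norm{\Phi_\alpha(g_2)}.
$$
Applying the same bound with $g_1 = (g_1g_2)g_2^{-1}$ gives the reverse inequality with $\norm{\Phi_\alpha(g_2^{-1})}$. Hence
$$
\abs{\omega_\alpha(\kappa(g_1g_2)) - \omega_\alpha(\kappa(g_1))} \le \max\{\omega_\alpha(\kappa(g_2)),\, \omega_\alpha(\opp\kappa(g_2))\} \le C_\alpha \norm{\kappa(g_2)}.
$$
Since $\{\omega_\alpha\}_{\alpha\in\Delta}$ is a basis of $\mfa^*$, and all norms on $\mfa$ are equivalent, this yields the estimate with a uniform constant.

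The second route is the classical geometric argument that $\kappa(g^{-1}h)$ is the $\mfa^+$-valued distance, which satisfies a triangle inequality. The representation route is self-contained given the paper's setup, so I would use that.
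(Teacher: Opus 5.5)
Your proof is correct and is essentially the paper's argument. The paper also uses \ref{item:SVs of Tits repn} to write $N_\alpha\omega_\alpha\bigl(b_x(h_1o)-b_x(h_2o)\bigr)$ as a difference of $\log\norm{\Phi_\alpha(\cdot)}$ terms, and it applies submultiplicativity to $h_1^{-1}g=(h_1^{-1}h_2)(h_2^{-1}g)$ in both directions. It then concludes because $\sum_{\alpha\in\Delta}\abs{\omega_\alpha(\cdot)}$ is a norm on $\mfa$.

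The only difference is packaging: you first isolate the general estimate $\norm{\kappa(g_2g_1)-\kappa(g_1)}\le C\norm{\kappa(g_2)}$. Your detour through $\opp$ to get the left-multiplication version is unnecessary, since submultiplicativity of the operator norm applies directly on either side.
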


\begin{proof}
Fix $x = g o\in X$. Let $h_1o, h_2o \in X$. Then for each $\alpha \in \Delta$, it follows from Property \ref{item:SVs of Tits repn} that
$$\begin{aligned}
N_{\alpha}\omega_{\alpha}\left( b_x(h_1o) - b_x(h_2o)\right) & = \log \norm{\Phi_{\alpha}(h_1^{-1}g)} - \log \norm{\Phi_{\alpha}(h_2^{-1}g)} \\
& \le \log \norm{ \Phi_{\alpha}(h_1^{-1}h_2)} = N_{\alpha} \omega_{\alpha} (\kappa(h_1^{-1}h_2)).
\end{aligned}
$$
And similarly, 
$$
\omega_{\alpha}\left( b_x(h_2o) - b_x(h_1o)\right) \le \omega_{\alpha}(\kappa(h_2^{-1}h_1)).
$$
This implies
$$
\sum_{\alpha \in \Delta} \abs{\omega_{\alpha}\left( b_x(h_2o) - b_x(h_1o)\right)} \le 2 \sum_{\alpha \in \Delta} \abs{\omega_{\alpha}(\kappa({h_1}^{-1}h_2))} .
$$
Since $\sum_{\alpha \in \Delta} \abs{\omega_{\alpha}(\cdot)}$ is a norm on $\fa$ and 
$$
\dist_X(h_1o,h_2o) = \norm{\kappa(h_1^{-1}h_2)}, 
$$
 this finishes the proof.
\end{proof}

For non-empty $\theta \subset \Delta$,  let $\pi_\theta : \mfa \rightarrow \mfa_\theta$ be the projection satisfying Equation~\eqref{eqn:defn of pi_theta to mfa_theta}  and then let  $\partial_{\theta} X$ be the set of functions $\xi : X \rightarrow \mfa_\theta$ where there exists an escaping sequence $\{x_n\} \subset X$ with $\pi_\theta b_{x_n} \rightarrow \xi$ in the compact-open topology. Lemma \ref{lem:Lipschitz}, together with the separability of $X$, implies that $\partial_\theta X$ is compact in the compact-open topology. Further, $\Gsf$  acts on $\partial_{\theta} X$  by 
$$
g\cdot \xi = \xi \circ g^{-1} - \xi(g^{-1}o).
$$

The next proposition shows that $\partial_\theta X$ can be used to compactify $X$. 

\begin{proposition}\label{prop:compactification} The space $\overline{X}^{\theta}: = X \sqcup \partial_{\theta} X$ has a topology which makes it a compactification of $X$, that is $\overline{X}^{\theta}$ is a compact metrizable space and the inclusion $X \hookrightarrow \overline{X}^{\theta}$ is a topological embedding with open dense image. Moreover with respect to this topology:
\begin{enumerate}
\item  $\{x_n\} \subset X$ converges to $\xi \in \partial_{\theta} X$ if and only if $\dist_X(o,x_n) \rightarrow +\infty$ and $\pi_\theta b_{x_n} \rightarrow \xi$ in the compact-open topology.
\item The $\Gsf$-action on $\overline{X}^{\theta}$ is continuous.
\item The function  $B_\theta : \Gsf \times \overline{X}^{\theta} \rightarrow \fa$ defined by 
$$
B_\theta(g,x) = \begin{cases} \pi_{\theta} b_x(g^{-1} o) & \text{ if } x \in X \\ x(g^{-1} o) & \text{ if } x \in \partial_{\theta} X \end{cases}
$$
is continuous. 
\end{enumerate} 
\end{proposition}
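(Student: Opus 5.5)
I would realize $\overline{X}^{\theta}$ as a subspace of a compact metrizable space of functions. Let $\mathcal{C} = C(X, \mfa_\theta)$ with the compact-open topology; since $X$ is a proper separable metric space, $\mathcal{C}$ is metrizable. By Lemma~\ref{lem:Lipschitz} the family $\{\pi_\theta b_x : x \in X\}$ is uniformly Lipschitz, and every member vanishes at $o$ because $b_x(o) = 0$. Arzel\`a--Ascoli therefore shows that its closure $\mathcal{K} \subset \mathcal{C}$ is compact.

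The map $x \mapsto \pi_\theta b_x$ need not be injective; for instance, for $\theta \neq \Delta$ the value $b_{ko}$ does not determine $k$. So I would not embed $X$ directly into $\mathcal{C}$. Instead I would use the product $\hat X := X \cup \{\infty\}$, the one-point compactification, and the map
$$
F : X \to \hat X \times \mathcal{C}, \quad F(x) = (x, \pi_\theta b_x),
$$
sending $\xi \in \partial_\theta X$ to $(\infty, \xi)$. This map $X \sqcup \partial_\theta X \to \hat X \times \mathcal{K}$ is injective. Its image is exactly the closure of $F(X)$. Indeed, a limit of $F(x_n)$ either has first coordinate in $X$, in which case $x_n \to x$ and continuity of $x \mapsto \pi_\theta b_x$ gives the limit $F(x)$, or has first coordinate $\infty$, in which case $\{x_n\}$ escapes and the second coordinate lies in $\partial_\theta X$ by definition. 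Continuity of $x \mapsto b_x$ follows from the inequality $\abs{\omega_\alpha(\kappa(h^{-1}g)) - \omega_\alpha(\kappa(h^{-1}g'))} \le \omega_\alpha$-bounds on $\kappa(g^{-1}g')$, by the same argument as in Lemma~\ref{lem:Lipschitz}.

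Pulling back the topology makes $\overline{X}^\theta$ a compact metrizable space. The restriction to $X$ is a homeomorphism onto the open set $X \times \mathcal{C}$ intersected with the image, which is dense by construction. Item (1) is then immediate, since escaping in $X$ is the same as converging to $\infty$ in $\hat X$, that is $\dist_X(o,x_n)\to\infty$.

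For item (2), I would first check the equivariance identity $\pi_\theta b_{gx} = g\cdot \pi_\theta b_x$, where the action on functions is $g\cdot f = f\circ g^{-1} - f(g^{-1}o)$. This follows from $b_{gx}(ho) = \kappa(h^{-1}gx') - \kappa(gx')$ with $x = x'o$, after writing $\kappa(h^{-1}gx') - \kappa(x') - (\kappa(gx') - \kappa(x'))$. The action $\Gsf \times \mathcal{C} \to \mathcal{C}$, $(g,f) \mapsto f\circ g^{-1} - f(g^{-1}o)$, is jointly continuous on the uniformly Lipschitz set $\mathcal{K}$. To see this, note that uniform convergence on compacta of $f_n \to f$, together with $g_n \to g$ and equicontinuity, gives $f_n\circ g_n^{-1} \to f\circ g^{-1}$ on compacta. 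The action of $\Gsf$ on $\hat X$ is also continuous, because isometries extend to the one-point compactification continuously in $(g,x)$. Hence the diagonal action on the image of $F$ is continuous, and it agrees with the given action.

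Item (3) is evaluation. With $F(x) = (x, f)$ we have $B_\theta(g,x) = f(g^{-1}o)$, and the evaluation $(f, p) \mapsto f(p)$ is continuous on $\mathcal{K} \times X$ by equicontinuity.

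The main obstacle is the non-injectivity of $x\mapsto \pi_\theta b_x$, together with the requirement that $X$ be open in the compactification. I expect the product trick with $\hat X$ to handle both issues cleanly. The remaining care goes into verifying that the image of $F$ is closed and that the boundary points are exactly $\partial_\theta X$.
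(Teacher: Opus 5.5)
Your proof is correct and is essentially the paper's argument. The paper's explicit metric $\min\{\dist_X(x,y), h(x)+h(y)\} + \dist_0(\pi_\theta b_x,\pi_\theta b_y)$, with $h \equiv 0$ on $\partial_\theta X$, is exactly the pullback under your map $F$ of Mandelkern's metric on the one-point compactification $X \cup \{\infty\}$ plus a compact-open metric on $C(X,\mfa_\theta)$, so you have supplied the verifications the paper leaves to the reader (your example of non-injectivity is vacuous since $ko = o$; genuine non-injectivity arises when $\theta$ misses a simple factor of $\Gsf$, but the product construction never needs injectivity anyway).
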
 

\begin{remark} 
Notice that the function $B_\theta$ is a linear cocycle: 
$$
B_\theta(g_1g_2,x) = B_\theta(g_1, g_2 x) + B_\theta(g_2, x)
$$
for all $g_1,g_2 \in \Gsf$ and $x \in \overline{X}^\theta$.   
\end{remark}

\begin{proof} For two Lipschitz functions $\xi_1,\xi_2 : \Gsf \rightarrow \mfa_\theta$ define 
$$
\dist_0(\xi_1,\xi_2) := \sum_{n \ge 1} \frac{1}{2^n} \max_{x \in \overline{B_X(o, n)}} \norm{\xi_1(x)-\xi_2(x)}
$$
where $B_X(o, n) \subset X$ is the $n$-ball centered at $o$.
Also define $h : X \rightarrow (0, +\infty)$ by $h(x) := \frac{1}{1+\dist_X(o,x)}$. Then define a metric $\dist$ on $\overline{X}^{\theta}$ by 
\begin{align*}
\dist(x,y) & :=  \min\left\{ \dist_X(x,y), h(x)+h(y) \right\} + \dist_0(b_x,b_y)   \quad \text{if} \quad x,y \in X, \\
\dist(x,\xi) & := h(x) + \dist_0(b_x, \xi)   \quad \text{if} \quad x\in X, \ \xi \in \partial_\theta X, \\
\dist(\xi_1,\xi_2) & := \dist_0(\xi_1,\xi_2)   \quad \text{if} \quad \xi_1,\xi_2 \in \partial_\theta X.
\end{align*}
Following \cite[Section 3]{Mandelkern_compactification}, together with the fact that $\abs{h(x) - h(y)} \le \dist_X(x, y)$ for $x, y \in X$, one can check that $\dist(\cdot, \cdot)$ is indeed a metric on $\overline{X}^{\theta}$, and the topology induced by this metric has all the desired properties. 
\end{proof} 

Patterson--Sullivan measures on $\partial_{\theta} X$ can naturally be defined as follows. 

\begin{definition}\label{defn: PS on X theta}
Given a subgroup $\Gamma < \Gsf$, $\theta \subset \Delta$ non-empty,  a functional $\phi \in \mfa_\theta^*$, and $\delta \geq 0$,  a Borel probability measure $\mu$ on $\partial_{\theta} X$ is a \emph{coarse $(\Gamma, \phi, \delta)$-Patterson--Sullivan measure} if  there exists $C \ge 1$ such that for every $\gamma \in \Gamma$ the measures $\mu$, $\gamma_*\mu$ are absolutely continuous and 
$$
C^{-1}  e^{-\delta \phi  \xi(\ga o)} \le \frac{d\gamma_* \mu}{d\mu}(\xi) \le  C e^{-\delta \phi \xi(\ga o)} \quad \mu\text{-a.e.}
$$
We call $\mu$ a \emph{$(\Ga, \phi, \delta)$-Patterson--Sullivan measure} if $C = 1$. 
\end{definition} 

Using Patterson's original construction for Fuchsian groups, we can prove the following existence result.

\begin{proposition} \label{prop:PS meas exists}
   If $\Gamma < \Gsf$ is discrete, $\phi \in \mfa_\theta^*$, and $\delta^\phi(\Gamma) < +\infty$, then there exists a $(\Gamma, \phi, \delta^\phi(\Gamma))$-Patterson--Sullivan measure on $\partial_\theta X$. 
\end{proposition}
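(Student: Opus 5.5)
We follow Patterson's construction on the compact metrizable space $\overline{X}^\theta$ of Proposition~\ref{prop:compactification}. Write $\delta := \delta^\phi(\Gamma)$. Since $\phi\pi_\theta = \phi$ on $\mfa$ (as $\phi$ lies in the span of the $\omega_\alpha$, $\alpha\in\theta$), the Poincar\'e series
$$
P(s) := \sum_{\gamma \in \Gamma} e^{-s \phi(\kappa(\gamma))}
$$
converges for $s > \delta$ and diverges for $s < \delta$. First suppose that $P$ diverges at $s=\delta$. For $s>\delta$ set
$$
\mu_s := \frac{1}{P(s)} \sum_{\gamma\in\Gamma} e^{-s\phi(\kappa(\gamma))} \mathcal{D}_{\gamma o},
$$
where $\mathcal{D}_{\gamma o}$ denotes the Dirac mass at $\gamma o \in X \subset \overline{X}^\theta$. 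If $P$ converges at $\delta$, we insert Patterson's slowly increasing weight: we fix an increasing $L:[0,\infty)\to[1,\infty)$ such that for every $\epsilon>0$ there is $t_0$ with $L(t+u)\le e^{\epsilon u}L(t)$ for $t\ge t_0$, $u \geq 0$, and such that $\sum_\gamma L(\phi(\kappa(\gamma))) e^{-\delta\phi(\kappa(\gamma))}=+\infty$. We then weight the Dirac masses by $L(\phi(\kappa(\gamma)))$. This is Patterson's lemma, which uses only that $\delta$ is the abscissa of convergence of the Dirichlet-type series $\sum_\gamma e^{-s\phi(\kappa(\gamma))}$. Here $\phi(\kappa(\gamma))$ may be negative for finitely many $\gamma$ only, since the counting function is finite for each $T$ (because $\delta<\infty$). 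So we may treat these finitely many terms separately, or apply $L$ to $\max(\phi\kappa,0)$.

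By compactness, we choose $s_n \searrow \delta$ with $\mu_{s_n} \to \mu$ weak-$*$ on $\overline{X}^\theta$. Divergence of the (weighted) series at $\delta$ gives $P(s_n)\to\infty$, so every bounded subset of $X$ (which contains finitely many orbit points by discreteness) gets mass tending to $0$. Hence $\mu$ is a probability measure supported on $\partial_\theta X$.

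For quasi-invariance we compute $\gamma_*\mu_s$ for $\gamma \in \Gamma$. We have
$$
\gamma_*\mu_s = \frac{1}{P(s)}\sum_{\eta} e^{-s\phi\kappa(\gamma^{-1}\eta)}\mathcal{D}_{\eta o}.
$$
On the other hand, with $x = \eta o$ and $B_\theta$ as in Proposition~\ref{prop:compactification},
$$
\phi B_\theta(\gamma^{-1}, x) = \phi\pi_\theta b_{\eta o}(\gamma o) = \phi\kappa(\gamma^{-1}\eta) - \phi\kappa(\eta).
$$
Therefore
$$
\frac{d\gamma_*\mu_s}{d\mu_s}(x) = e^{-s\phi B_\theta(\gamma^{-1},x)}
$$
exactly in the unweighted case. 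In the weighted case we get this times the factor $L(\phi\kappa(\gamma^{-1}\eta))/L(\phi\kappa(\eta))$, and this factor tends to $1$ uniformly as $\eta\to\infty$. Indeed, by Lemma~\ref{lem:Lipschitz}, $|\phi\kappa(\gamma^{-1}\eta)-\phi\kappa(\eta)| \le C\,\dist_X(o,\gamma o)$, so the slow-growth property of $L$ applies.

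By Proposition~\ref{prop:compactification}(3), $x\mapsto e^{-s\phi B_\theta(\gamma^{-1},x)}$ is continuous on $\overline{X}^\theta$, and it converges uniformly in $s$ near $\delta$ on this compact space. Take a continuous test function $f$. Then $\int f\, d\gamma_*\mu_{s_n} = \int f e^{-s_n\phi B_\theta(\gamma^{-1},\cdot)}\,d\mu_{s_n}$ up to an error tending to $0$, since the correction from $L$ is uniformly small off a finite set, and the finite set has vanishing mass. Passing to the limit gives $d\gamma_*\mu/d\mu(\xi)= e^{-\delta\phi\xi(\gamma o)}$, using $B_\theta(\gamma^{-1},\xi)=\xi(\gamma o)$ for $\xi\in\partial_\theta X$. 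Mutual absolute continuity follows because the density is positive and continuous.

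We expect the main obstacle to be the convergent-type case, namely checking that Patterson's weight interacts correctly with the cocycle. This reduces to the uniform Lipschitz bound of Lemma~\ref{lem:Lipschitz}, which is exactly what the horofunction compactification provides. The remaining steps are routine given the continuity of $B_\theta$.
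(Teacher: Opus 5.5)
Your proposal is correct and follows the paper's proof: Patterson's weighted orbit measures $\mu_s$ on the compact space $\overline{X}^\theta$, a weak-$*$ limit as $s \searrow \delta^\phi(\Gamma)$, and the continuity of $B_\theta(\gamma^{-1},\cdot)$ from Proposition~\ref{prop:compactification} to pass the Radon--Nikodym relation $d\gamma_*\mu_s/d\mu_s = e^{-s\phi B_\theta(\gamma^{-1},\cdot)}$ (exact in the unweighted case, up to the weight ratio otherwise) to the limit. You also fill in steps the paper calls ``straightforward'', namely that the mass escapes to $\partial_\theta X$ and that the weight ratio tends to $1$ by Lemma~\ref{lem:Lipschitz}, and your additive slow-growth condition $L(t+u)\le e^{\epsilon u}L(t)$ is the standard form of Patterson's lemma and is exactly what this step needs.
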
 

\begin{proof} For $\gamma \in \Gamma$ fixed, the function $f_\gamma:=\phi B_\theta(\gamma, \cdot) : \overline{X}^\theta \rightarrow \Rb$ is continuous. Hence we can follow Patterson's original argument for constructing Patterson--Sullivan measures for Fuchsian groups. In particular, by ~\cite[Lemma 3.1]{Patterson1976}, there exists a continuous non-decreasing function $h : \Rb \to \Rb^+$
   such that:
\begin{enumerate}
\item The series $\sum_{\gamma \in \Gamma} h( \phi(\kappa(\gamma))) e^{-s\phi(\kappa(\gamma))}$ diverges at $s = \delta^\phi(\Gamma)$ and converges for $s >  \delta^\phi(\Gamma)$. 
\item For any $\epsilon > 0$ there exists $t_0 > 0$ such that if $t > t_0$ and $s > 1$, then $h(st) \leq s^\epsilon h(t)$. 
\end{enumerate}
(In the case when $\sum_{\gamma \in \Gamma}  e^{-\delta^\phi(\Gamma)\phi(\kappa(\gamma))}=+\infty$, we can take $h \equiv 1$.)

For $s > \delta^\phi(\Gamma)$, consider the probability measure 
$$
\mu_s : = \frac{1}{\sum_{\gamma \in \Gamma} h( \phi(\kappa(\gamma))) e^{-s\phi(\kappa(\gamma))}} \sum_{\gamma \in \Gamma} h( \phi(\kappa(\gamma))) e^{-s\phi(\kappa(\gamma))} \Dc_{\gamma o}
$$
on $ \overline{X}^\theta$, where $\Dc_{\gamma o}$ is the Dirac mass at $\gamma o$. Next, fix $s_n \searrow \delta^\phi(\Gamma)$ such that $\mu_{s_n}$ converges to a probability measure $\mu$ in the weak-$*$ topology. Then using the continuity of $B_\theta(\gamma^{-1}, \cdot)$ on $\overline{X}^\theta$ it is straightforward to show that $\mu$ is a $(\Gamma, \phi, \delta^\phi(\Gamma))$-Patterson--Sullivan measure.
\end{proof}

Next we provide a formula for elements in $\partial_\theta X$ in terms of the linear representations introduced in Section~\ref{sec:linear representations}.

\begin{lemma} \label{lem:Titsexpress}
If $g_n o \rightarrow \xi$ in $\overline{X}^\theta$ and $\frac{ \Phi_{\alpha}(g_n)}{\norm{\Phi_{\alpha}(g_n)}} \rightarrow T_\alpha$ in $\End(V_\alpha)$ for all $\alpha \in \theta$, then 
$$
N_{\alpha} \omega_{\alpha} \xi(ho) = \log \norm{\Phi_\alpha(h^{-1})T_\alpha}
$$
for all $h \in \Gsf$ and all $\alpha \in \theta$. In particular, 
$$
\omega_\alpha\xi(h^{-1}o) \leq \omega_\alpha \kappa(h)
$$
for all $h \in \Gsf$ and all $\alpha \in \theta$. 
\end{lemma}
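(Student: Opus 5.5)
The plan is to compute $\omega_\alpha \xi(ho)$ directly from the definition of $b_{x_n}$ and Property~\ref{item:SVs of Tits repn}, and then pass to the limit.

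Write $x_n := g_n o$. By Proposition~\ref{prop:compactification}(1), $\pi_\theta b_{x_n} \rightarrow \xi$ pointwise (indeed uniformly on compacts). By Equation~\eqref{eqn:defn of pi_theta to mfa_theta}, $\omega_\alpha \pi_\theta = \omega_\alpha$ for $\alpha \in \theta$. Hence
$$
N_\alpha \omega_\alpha \xi(ho) = \lim_{n \to \infty} N_\alpha \omega_\alpha\bigl( \kappa(h^{-1} g_n) - \kappa(g_n)\bigr).
$$
Property~\ref{item:SVs of Tits repn} converts the right-hand side into
$$
\lim_{n\to\infty} \Bigl( \log \norm{\Phi_\alpha(h^{-1}g_n)} - \log\norm{\Phi_\alpha(g_n)} \Bigr) = \lim_{n\to\infty} \log \Bigl\| \Phi_\alpha(h^{-1}) \tfrac{\Phi_\alpha(g_n)}{\norm{\Phi_\alpha(g_n)}} \Bigr\|.
$$
Since $\Phi_\alpha(h^{-1})$ is fixed and the normalized matrices converge to $T_\alpha$ in $\End(V_\alpha)$, continuity of the operator norm gives convergence to $\log\norm{\Phi_\alpha(h^{-1})T_\alpha}$.

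The one point needing care is that this logarithm is finite, i.e.\ $\Phi_\alpha(h^{-1})T_\alpha \neq 0$. This holds because $\norm{T_\alpha}=1$ as a limit of norm-one operators, and $\Phi_\alpha(h^{-1})$ is invertible. Alternatively, the left-hand side is a finite limit anyway, since by Lemma~\ref{lem:Lipschitz} it is bounded by a constant times $\dist_X(o,ho)$. Either way the identity holds with both sides finite.

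For the inequality, apply the formula at the point $h^{-1}o$:
$$
N_\alpha \omega_\alpha \xi(h^{-1}o) = \log\norm{\Phi_\alpha(h)T_\alpha} \le \log\bigl(\norm{\Phi_\alpha(h)}\,\norm{T_\alpha}\bigr) = \log \norm{\Phi_\alpha(h)} = N_\alpha\omega_\alpha(\kappa(h)),
$$
using $\norm{T_\alpha}=1$ and Property~\ref{item:SVs of Tits repn} again. Dividing by $N_\alpha>0$ gives $\omega_\alpha\xi(h^{-1}o) \le \omega_\alpha\kappa(h)$.

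There is no real obstacle beyond noting $\norm{T_\alpha}=1$.
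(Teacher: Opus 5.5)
Your proposal is correct and follows the same route as the paper. You use Property~\ref{item:SVs of Tits repn} to rewrite $N_\alpha\omega_\alpha\,\pi_\theta b_{g_n o}(ho)$ as $\log\norm{\Phi_\alpha(h^{-1})\Phi_\alpha(g_n)/\norm{\Phi_\alpha(g_n)}}$, pass to the limit, and get the inequality from submultiplicativity of the operator norm and $\norm{T_\alpha}=1$; your remarks that $\omega_\alpha\pi_\theta=\omega_\alpha$ for $\alpha\in\theta$ and that $\Phi_\alpha(h^{-1})T_\alpha\neq 0$ just make explicit what the paper leaves implicit.
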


\begin{proof}
Let $g \in \Gsf$ and recall that $b_{go}(ho) = \kappa(h^{-1} g) - \kappa(g)$. For each $\alpha \in \theta$, Property \ref{item:SVs of Tits repn} implies that
$$
N_{\alpha} \omega_{\alpha} \kappa(h^{-1} g) = \log \norm{ \Phi_{\alpha}(h^{-1} g)} \quad \text{and} \quad 
N_{\alpha} \omega_{\alpha} \kappa(g) = \log \norm{ \Phi_{\alpha}(g)}.
$$
Hence,
$$
N_{\alpha} \omega_{\alpha}b_{go}(ho) = \log \norm{ \Phi_{\alpha}(h^{-1}) \frac{ \Phi_{\alpha}(g)}{\norm{\Phi_{\alpha}(g)}}}.
$$
The first claim then follows. For the ``in particular'' part, since $\norm{T_\alpha } =1$, 
\begin{equation*}
\omega_\alpha \kappa(h)  = \frac{1}{N_{\alpha}} \log \norm{\Phi_\alpha(h)} \geq   \frac{1}{N_{\alpha}}\log \norm{\Phi_\alpha(h)T_\alpha} = \omega_\alpha\xi(h^{-1}o). \qedhere
\end{equation*}
\end{proof}

Recall that $B_\theta^{IW} : \Gsf \times \Fc_\theta \rightarrow~\mfa_\theta$ denotes the partial Iwasawa cocyle. Using this cocycle, we show that $\Fc_\theta$ embeds into $\partial_\theta X$.

\begin{proposition} \label{prop:embedding of partial flags}
There is a topological embedding  $\iota : \Fc_\theta \rightarrow \partial_\theta X$ that satisfies 
\begin{equation}\label{eqn:definition of iota}
\iota(x)(ho) = B_\theta^{IW}(h^{-1},x)
\end{equation}
for all $x \in \Fc_\theta$ and $h \in \Gsf$. 

Moreover:
\begin{enumerate}
\item If a sequence $\{g_n\} \subset \Gsf$ satisfies $\min_{\alpha \in \theta} \alpha(\kappa(g_n)) \to + \infty$ and $U_{\theta}(g_n) \rightarrow x$,  then 
$$
g_n o \rightarrow \iota(x) \quad \text{in } \overline{X}^\theta.
$$
\item For $\alpha \in \theta$, 
if $x \in \Fc_\theta$, $\alpha \in \theta$, and $v_\alpha \in V_\alpha$ is a unit vector with $\zeta_\alpha (x) = [v_\alpha]$, then 
$$
N_{\alpha} \omega_\alpha \iota(x)(ho) = \log \norm{\Phi_\alpha(h^{-1})v_\alpha}. 
$$
\item If $\mu$ is a (coarse, resp.) $(\Gamma, \phi, \delta)$-Patterson--Sullivan measure on $\Fc_\theta$ in the sense of Definition~\ref{defn:Quints definition}, then $\iota_* \mu$ is a (coarse, resp.) $(\Gamma, \phi, \delta)$-Patterson--Sullivan measure on $\partial_\theta X$ in the sense of Definition~\ref{defn: PS on X theta}.
\end{enumerate}
\end{proposition}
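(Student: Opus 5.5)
The plan is to establish the formula in (2) first and then build everything else on it. Fix $x = k\Psf_\theta$ with $k \in \Ksf$, and let $\tilde x = k \Psf_\Delta$. By definition of the Iwasawa cocycle, $h^{-1}k = k' e^{B} n$ with $B = B_\Delta^{IW}(h^{-1},\tilde x)$ and $n \in \Nsf$. For $\alpha \in \theta$, take $v_\alpha := \Phi_\alpha(k) v^+$, where $v^+$ is a unit vector in $V_\alpha^+$. Then $\zeta_\alpha(x) = [v_\alpha]$ by \ref{item:boundary maps of Tits repn}. Since $\Nsf$ fixes the highest weight line $V_\alpha^+$ ($\Nsf < \Psf_\Delta \subset \Psf_\alpha$ stabilizes $\zeta_\alpha(\Psf_\alpha)=V_\alpha^+$, and unipotence forces it to act trivially on this line), \ref{item:splitting for Tits repn} and $\Ksf$-invariance of the norm give
$$
\log\norm{\Phi_\alpha(h^{-1})v_\alpha} = N_\alpha \omega_\alpha(B) = N_\alpha\omega_\alpha \pi_\theta B = N_\alpha \omega_\alpha B_\theta^{IW}(h^{-1},x),
$$
using Equation~\eqref{eqn:defn of pi_theta to mfa_theta}. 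Because $\{\omega_\alpha\}_{\alpha\in\theta}$ is a basis of $\mfa_\theta^*$, this shows that the function $\iota(x)$ defined by Equation~\eqref{eqn:definition of iota} is determined by the norms in (2).

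Next I prove (1), which also shows $\iota(x) \in \partial_\theta X$. Let $g_n$ satisfy the hypotheses. Passing to subsequences, assume $g_n o \to \xi$ in the compact space $\overline{X}^\theta$, that $\Phi_\alpha(g_n)/\norm{\Phi_\alpha(g_n)} \to T_\alpha$, and that $U_{\opp^*\theta}(g_n^{-1})$ converges. Since $\alpha(\kappa(g_n))\to\infty$ forces $\dist_X(o,g_no)\to\infty$, the limit $\xi$ lies in $\partial_\theta X$. By Lemma~\ref{lem:rank one limits in Valpha}, each $T_\alpha$ is rank one with image $\zeta_\alpha(x)$, and $\norm{T_\alpha}=1$, so $T_\alpha = v_\alpha \otimes w^*$ for some unit covector $w^*$. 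Hence $\norm{\Phi_\alpha(h^{-1})T_\alpha} = \norm{\Phi_\alpha(h^{-1})v_\alpha}$. Lemma~\ref{lem:Titsexpress} together with (2) then gives $\omega_\alpha \xi = \omega_\alpha \iota(x)$ for all $\alpha\in\theta$, so $\xi = \iota(x)$. Since every subsequential limit equals $\iota(x)$, the whole sequence converges. Existence of such a sequence is easy: take $g_n = k e^{tH}$ with $H$ in the interior of $\mfa^+$.

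For the embedding claim: continuity of $\iota$ follows from (2), since $x\mapsto v_\alpha$ is locally continuous up to sign and the norm formula is locally uniform in $h$, which gives convergence in the compact-open topology. Since $\Fc_\theta$ is compact and $\partial_\theta X$ is Hausdorff, it suffices to prove injectivity. Suppose $\iota(x)=\iota(y)$. Then for each $\alpha\in\theta$ we have $\norm{\Phi_\alpha(h)v}=\norm{\Phi_\alpha(h)w}$ for all $h$, where $[v]=\zeta_\alpha(x)$ and $[w]=\zeta_\alpha(y)$. Choose $h_n = e^{nH}k_0^{-1}$, with $H$ regular and $k_0\in\Ksf$ chosen so that $\Phi_\alpha(k_0^{-1})v \notin V_\alpha^-$ while $\Phi_\alpha(k_0^{-1})w$ is arbitrary. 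The growth rates differ unless both components along $V_\alpha^+$ vanish together. Running this over all $k_0$ shows that the hyperplanes $\Phi_\alpha(k_0)V_\alpha^-$ which contain $v$ are exactly those which contain $w$. By \ref{item:boundary maps of Tits repn}, these hyperplanes are exactly the non-transverse ones, and transversality determines the point; so $\zeta_\alpha(x)=\zeta_\alpha(y)$ for all $\alpha\in\theta$. Finally, $\prod_{\alpha\in\theta}\zeta_\alpha$ is injective on $\Fc_\theta$, since $\Psf_\theta = \bigcap_{\alpha\in\theta}\Psf_\alpha$. I expect this injectivity step to be the main obstacle. A cleaner alternative is to use a dense set of $h$ and the fact that $v\mapsto\norm{\Phi_\alpha(\cdot)v}$ determines $[v]$, via the orbit map under $\Asf$.

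For (3), let $F = \iota(\Fc_\theta)$. For $\xi = \iota(x)\in F$,
$$
\xi(\gamma o) = B_\theta^{IW}(\gamma^{-1},x).
$$
Also, $\iota$ is $\Gsf$-equivariant, by the cocycle identity $B^{IW}_\theta(h^{-1}g^{-1},x) = B^{IW}_\theta(h^{-1}, g^{-1}x) + B^{IW}_\theta(g^{-1},x)$ and the formula for the action. Therefore $\gamma_*\iota_*\mu = \iota_*\gamma_*\mu$, and the Radon--Nikodym derivative transports directly.
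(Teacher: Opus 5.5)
Your proof is correct in its main lines, but it runs in the opposite direction from the paper's. The paper starts from (1). It cites \cite[Lemma 6.6]{Quint_PS}, which gives $\pi_\theta\kappa(h^{-1}g_n)-\pi_\theta\kappa(g_n)\to B_\theta^{IW}(h^{-1},x)$ whenever $\min_{\alpha\in\theta}\alpha(\kappa(g_n))\to+\infty$ and $U_\theta(g_n)\to x$. This yields at once that $\iota(x)\in\partial_\theta X$, that $\iota$ is continuous, and (1). Then (2) is read off from (1) using Lemma~\ref{lem:rank one limits in Valpha} and Lemma~\ref{lem:Titsexpress}.

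You instead prove (2) first, directly from the Iwasawa decomposition $h^{-1}k\in\Ksf e^{B}\Nsf$ and the fact that $\Nsf$ fixes the highest weight line $V_\alpha^+$. You then obtain (1) from (2) via the same two lemmas. This uses that a norm-one rank-one limit $T_\alpha$ with image $\zeta_\alpha(x)$ satisfies $\norm{\Phi_\alpha(h^{-1})T_\alpha}=\norm{\Phi_\alpha(h^{-1})v_\alpha}$, and that $\{\omega_\alpha\}_{\alpha\in\theta}$ is a basis of $\mfa_\theta^*$. In effect you reprove the part of Quint's lemma needed here. So your argument is self-contained given \ref{item:SVs of Tits repn}--\ref{item:boundary maps of Tits repn}, at the cost of a few more lines. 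Your equivariance check for (3) also supplies what the paper calls immediate.

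The one step not justified as written is injectivity. From equality of the norms you correctly get that, for every $k_0\in\Ksf$, $v\in\Phi_\alpha(k_0)V_\alpha^-$ if and only if $w\in\Phi_\alpha(k_0)V_\alpha^-$. That is, $x$ and $y$ have the same non-transverse locus in $\Fc_{\opp^*\alpha}$. But ``transversality determines the point'' is not part of \ref{item:boundary maps of Tits repn}, which only converts transversality of flags into transversality of a line and a hyperplane. The claim is true, but it needs an additional Lie-theoretic argument. For instance, the stabilizer of the non-transverse locus of $x$ is a proper subgroup containing the maximal parabolic $\mathrm{Stab}_{\Gsf}(x)$, hence equals it.

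You can avoid this entirely by testing with one well-chosen $k_0$. Take $k_0$ with $\Phi_\alpha(k_0^{-1})v=u_\alpha\in V_\alpha^+$, and take $H\in\mfa^+$ with $\alpha(H)>0$. Then $\norm{\Phi_\alpha(e^H)\Phi_\alpha(k_0^{-1})w}=\norm{\Phi_\alpha(e^H)u_\alpha}=\norm{\Phi_\alpha(e^H)}$. By \ref{item:SVs of Tits repn}, $\sigma_1(\Phi_\alpha(e^H))>\sigma_2(\Phi_\alpha(e^H))$, and the splitting in \ref{item:splitting for Tits repn} is orthogonal. So the only unit vectors of maximal stretch are $\pm u_\alpha$, and hence $[w]=[v]$. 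This is exactly the paper's argument (with $h=k_2e^{-H}$), and it is the precise form of your ``cleaner alternative.''
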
 

\begin{proof} By   \cite[Lemma 6.6]{Quint_PS}, if a sequence $\{g_n\} \subset \Gsf$ satisfies  $\min_{\alpha \in \theta} \alpha(\kappa(g_n)) \to + \infty$ and $U_{\theta}(g_n) \rightarrow x$, then
$$
\lim_{n \rightarrow  + \infty} \pi_\theta b_{g_no}(ho) = \lim_{n \rightarrow  + \infty} \pi_{\theta} \kappa (h^{-1} g_n) - \pi_{\theta} \kappa (g_n) = B_\theta^{IW}(h^{-1},x)=\iota(x)(ho)
$$
for all $h \in \Gsf$. This shows that Equation~\eqref{eqn:definition of iota} defines a continuous map $\iota : \Fc_\theta \rightarrow \partial_\theta X$ and also establishes the first ``moreover'' part. Notice that the second ``moreover'' part is a consequence of the first, Lemma~\ref{lem:rank one limits in Valpha}, and Lemma~\ref{lem:Titsexpress}. 
The third ``moreover'' part is an immediate consequence of the definition of~$\iota$. 

To finish the proof we need to show that $\iota$ is a topological embedding. 
Since $\iota$ is continuous and $\Fc_\theta$ is compact, it suffices to show that $\iota$ is injective. Suppose that $\iota(x) = \iota(y)$. Let $x=k_1 \Psf_{\theta}$ and $y=k_2 \Psf_{\theta}$ where $k_1,k_2 \in \Ksf$. 

Fix $\alpha \in \theta$ and fix a unit vector $u_\alpha$ in $V_\alpha^+$. Property~\ref{item:boundary maps of Tits repn} implies that $\zeta_\alpha(x) = [\Phi_\alpha(k_1)u_\alpha]$ and $\zeta_\alpha(y) = [\Phi_\alpha(k_2)u_\alpha]$. Thus by part (2) of the moreover part of this proposition, 
$$
\norm{\Phi_\alpha(h^{-1}) \Phi_\alpha(k_1) u_\alpha} = e^{N_{\alpha} \omega_\alpha \iota(x)(ho)} =e^{N_{\alpha} \omega_\alpha \iota(y)(ho)} = \norm{\Phi_\alpha(h^{-1}) \Phi_\alpha(k_2) u_\alpha} 
$$
for all $h \in \Gsf$. Fix $H \in \mfa^+$ with $\min_{\alpha \in \theta} \alpha(H) > 0$. Then the above equation with $h := k_2 e^{-H}$ 
implies that 
$$
\norm{\Phi_\alpha(e^H)  \Phi_\alpha(k_2^{-1}k_1) u_\alpha} =  \norm{\Phi_\alpha(e^H)u_\alpha}.
$$
By Properties \ref{item:SVs of Tits repn} and \ref{item:splitting for Tits repn}, the map $v \in V_\alpha \smallsetminus \{0\} \mapsto \frac{  \norm{\Phi_\alpha(e^H)v}}{\norm{v}}$ is maximized only on $V_\alpha^+ \smallsetminus \{0\}$.  So we must have $ \Phi_\alpha(k_2^{-1}k_1) u_\alpha=\pm u_\alpha$. Then 
$$
\zeta_\alpha(x) = [\Phi_\alpha(k_1)u_\alpha] =  [\Phi_\alpha(k_2)u_\alpha] = \zeta_\alpha(y).
$$
Since this holds for all $\alpha \in \theta$, we have $x=y$. 
\end{proof}


\section{Shadows and contracting conical limit sets}\label{sec:shadows and cc limit set} 
 

In this section, we define shadows on $\partial_{\theta} X$ and use them to introduce the contracting conical limit set of a discrete subgroup. 

Recall, from Lemma~\ref{lem:Titsexpress}, that if $\xi \in \partial_\theta X$ and $g \in \Gsf$, then  
$$
\omega_\alpha\xi(g^{-1}o) \leq \omega_\alpha \kappa(g)
$$
for all $\alpha \in \theta$. Given $g \in \Gsf$, we introduce shadows in $\partial_\theta X$ by considering the set of functionals $\xi$ that are close to maximizing the expression $\omega_\alpha\xi(g^{-1}o)$ for all $\alpha \in \theta$. 

More precisely, for $g \in \Gsf$ and $R > 0$, define the associated \emph{shadow} by
$$
\Oc_R^\theta(g):= g \cdot \{    \xi \in \partial_{\theta} X : \omega_\alpha \xi(g^{-1}o) > \omega_\alpha \kappa(g) - R \text{ for all } \alpha \in \theta  \}.
$$

 In what follows we use $\pi_\theta$ to denote both the projection $\fa \rightarrow \fa_\theta$ satisfying Equation~\eqref{eqn:defn of pi_theta to mfa_theta} and the map $\partial_{\Delta} X \to \partial_{\theta} X$ obtained by the postcomposition with this projection. Since $\omega_\alpha \xi = \omega_\alpha \pi_\theta \xi$ for all $\alpha \in \theta$ and $\xi \in \partial_\Delta X$, we have 
$$
\pi_\theta \Oc_R^\Delta(g) \subset \Oc_R^\theta(g). 
$$
We also use Proposition \ref{prop:embedding of partial flags} to view $\Fc_\theta$ as a subset of $\partial_\theta X$. Then Equation~\eqref{eqn:defn of BIW}  and Proposition~\ref{prop:embedding of partial flags} imply that $\pi_\theta|_{\Fc_\Delta}$ coincides with the natural projection $\Fc_\Delta \rightarrow \Fc_\theta$ given by $g \Psf_\Delta \rightarrow g \Psf_\theta$.

Given a subgroup $\Ga < \Gsf$, we define its \emph{conical limit set} in $\partial_{\theta} X$ by
\begin{equation} \label{eqn:conical limit set classic}
   \La_{\theta}^{\rm con}(\Ga) := \left\{ \xi \in \partial_{\theta} X : \exists \, R > 0, \text{ escaping } \{ \ga_n \} \subset \Ga \text{ s.t. } \xi \in \bigcap_{n \ge 1} \Oc_R^{\theta}(\ga_n) \right\},
\end{equation}
following the classical definition of conical limit sets in rank one settings.

When $\Gsf$ is of higher rank, the intersection $\bigcap_{n \ge 1}\Oc_R^{\theta}(\ga_n)$ may not be a singleton, even after intersecting with the partial flag manifold $\Fc_{\theta}$. Moreover, the conical limit set $\La_{\theta}^{\rm con}(\Ga)$, after intersecting with $\Fc_{\theta}$, may not be a subset of the limit set $\La_{\theta}(\Ga)$. See Example \ref{example:nonshrinking conical} below for detailed descriptions of them. Hence, in view of Lemma \ref{lem:diam decay} below, we define the following smaller subset of conical limit set, which only involves shrinking shadows.

\begin{definition} \label{def:concon}
   
Given a subgroup $\Ga < \Gsf$, we call $\xi \in \partial_{\theta} X$ a \emph{contracting conical limit point} of $\Ga$ if there exist $R > 0$ and a sequence $\{ \ga_n \} \subset \Ga$ such that
$$
\lim_{n \to + \infty} \min_{\alpha \in \theta} \alpha(\kappa(\ga_n)) = + \infty \quad \text{and} \quad \xi \in \bigcap_{n \ge 1} \Oc_{R}^{\theta}(\ga_n).
$$
We denote by $\La_{\theta}^{\rm concon}(\Ga)$ the \emph{contracting conical limit set} of $\Ga$, which is defined as the set of all contracting conical limit points of $\Ga$. 
\end{definition}

For general groups, these limit sets have the following properties.

\begin{proposition}\label{prop:concon limit set for general groups} If $\Ga < \Gsf$ is a subgroup, then both $\La_{\theta}^{\rm concon}(\Ga)$ and $\La_{\theta}^{\rm con}(\Ga)$ are $\Ga$-invariant subsets. Moreover, $\La_{\theta}^{\rm concon}(\Ga)$ is a subset of the limit set $\Lambda_\theta(\Gamma) \subset~\Fc_\theta$ introduced in Section~\ref{sec:transverse groups}. \end{proposition}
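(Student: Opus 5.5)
The plan is to prove $\Gamma$-invariance via a shadow-inclusion estimate $g\,\Oc_R^\theta(\gamma)\subset \Oc_{R'}^\theta(g\gamma)$, and then to prove $\La_{\theta}^{\rm concon}(\Ga)\subset\Lambda_\theta(\Gamma)$ by reading off the Tits-representation data $T_\alpha$ of a contracting conical point using Lemma~\ref{lem:Titsexpress} and part (2) of Proposition~\ref{prop:embedding of partial flags}.

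\emph{Invariance.} Fix $g\in\Gsf$ and $R>0$. I would show there is $R'=R'(g,R)$ with
$$g\,\Oc_R^\theta(\gamma)\subset \Oc_{R'}^\theta(g\gamma)\quad\text{for all } \gamma\in\Gsf.$$
Let $\eta\in\partial_\theta X$ satisfy $\omega_\alpha\eta(\gamma^{-1}o)>\omega_\alpha\kappa(\gamma)-R$ for all $\alpha\in\theta$. Then $g\gamma\cdot\eta$ lies in $\Oc_{R'}^\theta(g\gamma)$ as soon as
$$\omega_\alpha\eta(\gamma^{-1}g^{-1}o)>\omega_\alpha\kappa(g\gamma)-R'.$$
This follows from two estimates.
\begin{itemize}
\item By Lemma~\ref{lem:Lipschitz} and the definition of $\partial_\theta X$ as limits of $\pi_\theta b_{x_n}$, elements of $\partial_\theta X$ are uniformly $L$-Lipschitz. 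Hence $\omega_\alpha\eta(\gamma^{-1}g^{-1}o)\ge \omega_\alpha\eta(\gamma^{-1}o)-L'\dist_X(o,g^{-1}o)$.
\item By submultiplicativity of $\norm{\Phi_\alpha(\cdot)}$ and Property~\ref{item:SVs of Tits repn}, $\omega_\alpha\kappa(g\gamma)\le\omega_\alpha\kappa(g)+\omega_\alpha\kappa(\gamma)$.
\end{itemize}
So $R'=R+\omega_\alpha\kappa(g)+L'\dist_X(o,g^{-1}o)$ (maximized over $\alpha$) works.

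If $\xi\in\bigcap_n\Oc_R^\theta(\gamma_n)$, then $g\xi\in\bigcap_n \Oc_{R'}^\theta(g\gamma_n)$. When $g\in\Gamma$, the sequence $\{g\gamma_n\}$ is escaping whenever $\{\gamma_n\}$ is. Moreover $\abs{\alpha(\kappa(g\gamma_n))-\alpha(\kappa(\gamma_n))}$ is bounded in terms of $g$, since $\kappa(g\gamma)-\kappa(\gamma)$ is bounded by $\norm{\kappa(g)}$. So both $\La^{\rm con}_\theta(\Ga)$ and $\La^{\rm concon}_\theta(\Ga)$ are $\Gamma$-invariant.

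\emph{Containment in the limit set.} Let $\xi\in\bigcap_n\Oc_R^\theta(\gamma_n)$ with $\min_{\alpha\in\theta}\alpha(\kappa(\gamma_n))\to+\infty$, and write $\xi=\gamma_n\cdot\eta_n$. By the formula for the action and $\eta_n(o)=0$, we get $\xi(\gamma_n o)=-\eta_n(\gamma_n^{-1}o)$, so
$$\omega_\alpha\xi(\gamma_n o)<-\omega_\alpha\kappa(\gamma_n)+R.$$
Choose $g_mo\to\xi$ and, passing to a subsequence, limits $T_\alpha$ as in Lemma~\ref{lem:Titsexpress}. That lemma turns the inequality into
$$\norm{\Phi_\alpha(\gamma_n^{-1})T_\alpha}\le e^{N_\alpha R}\norm{\Phi_\alpha(\gamma_n)}^{-1}.$$
Write $\Phi_\alpha(\gamma_n)=\Phi_\alpha(k_n)\Phi_\alpha(a_n)\Phi_\alpha(\ell_n)$. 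For a unit vector $v$ in the image of $T_\alpha$, decompose $\Phi_\alpha(k_n)^{-1}v$ along $V_\alpha^+\oplus V_\alpha^-$. On $V_\alpha^-$, the map $\Phi_\alpha(a_n)^{-1}$ has norm at least $\sigma_2(\Phi_\alpha(\gamma_n))^{-1}=e^{\alpha(\kappa(\gamma_n))}\norm{\Phi_\alpha(\gamma_n)}^{-1}$. Combining this with the display (using $\norm{T_\alpha}=1$ and the $\Ksf$-invariance of the inner product), the $V_\alpha^-$-component of $\Phi_\alpha(k_n)^{-1}v$ is $O(e^{N_\alpha R-\alpha(\kappa(\gamma_n))})\to0$.

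Passing to a subsequence with $U_\theta(\gamma_n)\to x$, which lies in $\Lambda_\theta(\Gamma)$ by definition, the image of $T_\alpha$ is therefore contained in the line $\lim\Phi_\alpha(k_n)V_\alpha^+=\zeta_\alpha(x)$. Thus $T_\alpha=v_\alpha\otimes f_\alpha$ with $\norm{v_\alpha}=\norm{f_\alpha}=1$ and $[v_\alpha]=\zeta_\alpha(x)$, so
$$N_\alpha\omega_\alpha\xi(ho)=\log\norm{\Phi_\alpha(h^{-1})v_\alpha}.$$
By Proposition~\ref{prop:embedding of partial flags}(2), this equals $N_\alpha\omega_\alpha\iota(x)(ho)$ for every $\alpha\in\theta$. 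Since $\{\omega_\alpha\}_{\alpha\in\theta}$ separates points of $\mfa_\theta$, it follows that $\xi=\iota(x)\in\Lambda_\theta(\Gamma)$.

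The main obstacle is this last step: making precise that the near-minimal expansion of $\operatorname{im}T_\alpha$ under $\Phi_\alpha(\gamma_n^{-1})$ forces the image into $\zeta_\alpha(x)$. That is where the contraction hypothesis $\alpha(\kappa(\gamma_n))\to\infty$ is used, and it is why the argument fails for $\La^{\rm con}_\theta(\Ga)$.
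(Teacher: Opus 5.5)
Your argument is correct, and it reaches the containment $\La_\theta^{\rm concon}(\Ga)\subset\Lambda_\theta(\Ga)$ by a different route from the paper. The invariance half is essentially the paper's Lemma~\ref{lem:shadow translate shadow}: a Lipschitz bound on boundary functions plus coarse additivity of $\omega_\alpha\circ\kappa$.

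For the containment, the paper argues in two lines. By Lemma~\ref{lem:endpoint in shadow}, $U_\theta(\gamma_n)\in\Oc_R^\theta(\gamma_n)$. By Lemma~\ref{lem:diam decay}, these shadows shrink to points once $\min_{\alpha\in\theta}\alpha(\kappa(\gamma_n))\to+\infty$. Hence $\xi=\lim U_\theta(\gamma_n)\in\Lambda_\theta(\Ga)$.

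You instead fix one representative $(T_\alpha)$ of $\xi$ and turn the shadow condition into the bound $\norm{\Phi_\alpha(\gamma_n^{-1})T_\alpha}\le e^{N_\alpha R}\norm{\Phi_\alpha(\gamma_n)}^{-1}$. You then use the singular-value gap $e^{\alpha(\kappa(\gamma_n))}$ to force ${\rm im}\,T_\alpha=\zeta_\alpha(x)$, and identify $\xi=\iota(x)$ through Proposition~\ref{prop:embedding of partial flags}(2) and the fact that $\{\omega_\alpha\}_{\alpha\in\theta}$ is a basis of $\mfa_\theta^*$. This is a pointwise, dual version of the diameter-decay argument, which instead shows that the limiting attracting operator $S_\alpha$ satisfies $S_\alpha T_\alpha\neq 0$.

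What each route buys:
\begin{itemize}
\item Yours is self-contained and makes the identification $\xi=\iota(x)$ explicit.
\item The paper's is shorter because it reuses two lemmas needed later anyway, for Proposition~\ref{prop:comparision with symm shadows} and Property~\ref{item:diam goes to zero ae}.
\end{itemize}

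Two small points of precision:
\begin{itemize}
\item On $V_\alpha^-$ you need that $\Phi_\alpha(a_n)^{-1}$ expands \emph{every} vector by at least $\sigma_2(\Phi_\alpha(\gamma_n))^{-1}$, not just that its norm is at least this. This holds because $\Phi_\alpha(a_n)$ preserves the orthogonal splitting and $\Phi_\alpha(a_n)|_{V_\alpha^-}$ has norm $\sigma_2$.
\item The constant hidden in your $O(\cdot)$ comes from the norm of a preimage $w$ with $T_\alpha w=v$, not from $\norm{T_\alpha}=1$. This is harmless, since $T_\alpha$ does not depend on $n$.
\end{itemize}
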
 

For transverse groups, one can say more. 

\begin{proposition}\label{prop:concon limit set for transverse groups} If $\Ga < \Gsf$ is a non-elementary $\Psf_{\theta}$-transverse group, then:
\begin{enumerate} 
\item $\La_{\theta}^{\rm concon}(\Ga)=\La_\theta^{\rm con}(\Gamma)$. Hence $\La_\theta^{\rm con}(\Gamma)$ is a subset of the limit set $\Lambda_\theta(\Gamma) \subset \Fc_\theta$ introduced in Section~\ref{sec:transverse groups}. 
\item  $\La_\theta^{\rm con}(\Gamma)$ coincides  with the conical limit set in the convergence group sense (recall that $\Gamma$ acts on $\Lambda_\theta(\Gamma)$ as a convergence group). 
\end{enumerate}
\end{proposition}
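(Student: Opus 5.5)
The plan is to obtain part (1) directly from the definitions together with Proposition~\ref{prop:concon limit set for general groups}. Part (2) needs a translation of the shadow condition for points of $\Fc_\theta$ into a linear-algebra condition in the representations $\Phi_\alpha$, followed by the north--south dynamics of Proposition~\ref{prop:NS dynamics}.

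\emph{Part (1).} If $\min_{\alpha\in\theta}\alpha(\kappa(\gamma_n))\to+\infty$, then $\kappa(\gamma_n)\to\infty$, so $\{\gamma_n\}$ is escaping. Hence $\La^{\rm concon}_\theta(\Ga)\subset\La^{\rm con}_\theta(\Ga)$ holds for every subgroup. Conversely, suppose $\Ga$ is $\Psf_\theta$-transverse and $\{\gamma_n\}\subset\Ga$ is escaping. After passing to a subsequence we may assume the $\gamma_n$ are distinct. Transversality then gives $\min_{\alpha\in\theta}\alpha(\kappa(\gamma_n))\to+\infty$, and a subsequence still has $\xi$ in every shadow. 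So the two sets coincide, and Proposition~\ref{prop:concon limit set for general groups} gives $\La^{\rm con}_\theta(\Ga)\subset\La_\theta(\Ga)$.

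\emph{Translating the shadow condition.} By Observation~\ref{obs:symmetry theta} I may work with $\theta\cup\opp^*\theta$, so that distinct limit points, including points of $\Lambda_{\opp^*\theta}(\Gamma)$, are transverse. Let $x\in\Fc_\theta$. By part (2) of Proposition~\ref{prop:embedding of partial flags}, applied with $h=\gamma^{-1}$ to the point $\gamma^{-1}x$, we have $x\in\Oc^\theta_R(\gamma)$ if and only if
$$
\frac{\norm{\Phi_\alpha(\gamma)v_\alpha}}{\norm{\Phi_\alpha(\gamma)}} > e^{-N_\alpha R}\quad\text{for all } \alpha\in\theta,
$$
where $v_\alpha$ is a unit vector spanning $\zeta_\alpha(\gamma^{-1}x)$.

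\emph{Conical implies conical in the convergence sense.} Take $x\in\La^{\rm con}_\theta(\Ga)$ with shadows along $\{\gamma_n\}$. Pass to a subsequence so that $U_\theta(\gamma_n)\to x^+$, $U_{\opp^*\theta}(\gamma_n^{-1})\to x^-$, $\gamma_n^{-1}x\to y$, and $\Phi_\alpha(\gamma_n)/\norm{\Phi_\alpha(\gamma_n)}\to T_\alpha$.
\begin{enumerate}
\item By Lemma~\ref{lem:rank one limits in Valpha}, $\ker T_\alpha=\zeta^*_\alpha(x^-)$. The displayed inequality passes to the limit, so $\zeta_\alpha(y)\not\subset\zeta_\alpha^*(x^-)$. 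By Property~\ref{item:boundary maps of Tits repn}, $y$ is transverse to $x^-$.
\item Proposition~\ref{prop:NS dynamics} says $\gamma_n\to x^+$ uniformly on compact sets of points transverse to $x^-$. Since $\gamma_n^{-1}x\to y$ with $y$ transverse to $x^-$, we get $x=\gamma_n(\gamma_n^{-1}x)\to x^+$, so $x=x^+$.
\item Applying Proposition~\ref{prop:NS dynamics} to $\gamma_n^{-1}$, the sequence $\gamma_n^{-1}$ sends every limit point transverse to $x^+$, hence every $z\in\La_\theta(\Ga)\smallsetminus\{x\}$, to $x^-$.
\item Since $y\neq x^-$, the pair $a:=y$, $b:=x^-$ witnesses that $x$ is conical for the convergence action.
\end{enumerate}

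\emph{Converse.} Suppose $\gamma_n^{-1}x\to a$ and $\gamma_n^{-1}z\to b\neq a$ for all $z\neq x$ in the limit set. Pass to subsequences as above. By the north--south dynamics, $\gamma_n^{-1}$ sends limit points distinct from $x^+$ to $x^-$. If $x\neq x^+$, pick $z\neq x,x^+$ in the limit set; then both $\gamma_n^{-1}x$ and $\gamma_n^{-1}z$ tend to $x^-$, contradicting $a\neq b$. So $x=x^+$ and $b=x^-$. Thus $a\neq x^-$, and since both lie in the limit set, $a$ is transverse to $x^-$. Then $T_\alpha\zeta_\alpha(a)\neq 0$, so the ratio $\norm{\Phi_\alpha(\gamma_n)v_\alpha}/\norm{\Phi_\alpha(\gamma_n)}$ is bounded below for large $n$. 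This puts $x$ in $\Oc_R^\theta(\gamma_n)$ for a uniform $R$ along a tail of the sequence.

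The main obstacle is the bookkeeping between $\theta$ and $\opp^*\theta$. One has to make sure the repelling point $x^-$ lies in the (symmetrized) limit set, so that "distinct" upgrades to "transverse". One also has to handle the identification of $\La_\theta$ with $\La_{\theta\cup\opp^*\theta}$ via Observation~\ref{obs:symmetry theta} compatibly with the embedding $\iota$ and the shadows, since $\pi_\theta$ of a shadow lies in a shadow.
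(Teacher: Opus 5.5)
Your argument is correct. Part (1) matches the paper, but for part (2) you take a genuinely different route.

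\emph{The paper's route.} It never unpacks the horofunction shadows on $\Fc_\theta$. In the forward direction it gets $U_\theta(\gamma_n)\to x$ from Lemmas~\ref{lem:endpoint in shadow} and~\ref{lem:diam decay}. It then converts $\Oc^\theta_R(\gamma_n)$ into symmetric space shadows $O^\theta_r(o,\gamma_n o)$ via Proposition~\ref{prop:comparision with symm shadows}(2), and cites \cite[Lemma 5.8]{KOW_PD} to conclude that $\gamma_n^{-1}(x,y)$ converges to a transverse pair. The converse runs this chain backwards and returns to $\Oc^\theta_R$ through Proposition~\ref{prop:comparision with symm shadows}(1).

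\emph{Your route.} You rewrite $x\in\Oc^\theta_R(\gamma)$ as $\norm{\Phi_\alpha(\gamma)v_\alpha}>e^{-N_\alpha R}\norm{\Phi_\alpha(\gamma)}$ for $\alpha\in\theta$. This uses Proposition~\ref{prop:embedding of partial flags}(2) together with $\iota(\gamma^{-1}x)=\gamma^{-1}\cdot\iota(x)$; that identity follows from the cocycle identity for $B^{IW}_\theta$ and deserves a line. Lemma~\ref{lem:rank one limits in Valpha} and Property~\ref{item:boundary maps of Tits repn}(b) then identify this condition, in both directions, with transversality of $\lim\gamma_n^{-1}x$ to the repelling point $x^-$. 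In effect you reprove the half of \cite[Lemma 5.8]{KOW_PD} that is needed, in the Tits-representation language, much like the Claim inside the proof of Proposition~\ref{prop:comparision with symm shadows}.

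\emph{Comparison.} Your version is self-contained and produces the pair $(a,b)=(y,x^-)$ explicitly. The paper's version is shorter given the existing tools, and it ties the new shadows to the classical ones.

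\emph{On the bookkeeping you flag.} Symmetrize only the limit-set dynamics, not the shadows. Replacing $\Oc^\theta_R$ by $\Oc^{\theta\cup\opp^*\theta}_R$ would force you to compare two different conical sets, and ``$\pi_\theta$ of a shadow lies in a shadow'' gives only one inclusion. You never need this, since both directions use only the inequalities for $\alpha\in\theta$.

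Take lifts $\tilde y,\tilde x^-\in\La_{\theta\cup\opp^*\theta}(\Ga)$. The inequalities for $\alpha\in\theta$ show only that the $\Fc_\theta$-projection of $\tilde y$ is transverse to the $\Fc_{\opp^*\theta}$-projection of $\tilde x^-$. To conclude $\tilde y\neq\tilde x^-$, and hence full transversality from the hypothesis on $\Ga$, use that the two projections of a single flag are never transverse. For example, $\zeta_\alpha(g\Psf_\alpha)\subset\zeta^*_\alpha(g\Psf_{\opp^*\alpha})$, because the highest weight line is orthogonal to the lowest one. The paper leaves this same point implicit.
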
 

In the next two subsections we establish some properties of these shadows and relate them to symmetric space shadows. Then in Section~\ref{sec:proof of shadow propositions} we prove Propositions~\ref{prop:concon limit set for general groups} and~\ref{prop:concon limit set for transverse groups}.

\subsection{Properties of shadows} We record some properties of shadows.

\begin{lemma} \label{lem:shadow translate shadow}
   For any  $g \in \Gsf$ and $R > 0$, there exists $R' = R'(g, R) > 0$ such that: if $h \in \Gsf$, then 
   $$
   g\Oc_R^{\theta}(h) \subset \Oc_{R'}^{\theta}(gh).
   $$
   In particular, $\La_{\theta}^{\rm con}(\Ga)$ and $\La_{\theta}^{\rm concon}(\Ga)$ are $\Ga$-invariant.

\end{lemma}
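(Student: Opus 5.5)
Unwinding the definitions, a point of $g\Oc_R^\theta(h)$ has the form $gh\cdot\xi$ where $\xi \in \partial_\theta X$ satisfies $\omega_\alpha \xi(h^{-1}o) > \omega_\alpha\kappa(h) - R$ for all $\alpha\in\theta$. Since $gh\cdot \xi$ is exactly the image under $gh$ of a point in the set defining $\Oc^\theta_{R'}(gh)$, the inclusion reduces to the following claim. For the same $\xi$ we must have
$$
\omega_\alpha \xi\big((gh)^{-1}o\big) > \omega_\alpha\kappa(gh) - R' \quad \text{for all } \alpha \in \theta.
$$
So the plan is to compare $\omega_\alpha\xi(h^{-1}g^{-1}o)$ with $\omega_\alpha\xi(h^{-1}o)$, and $\omega_\alpha\kappa(gh)$ with $\omega_\alpha\kappa(h)$, with errors depending only on $g$.

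For the first comparison I use Lemma~\ref{lem:Lipschitz}. Each $\xi\in\partial_\theta X$ is a compact-open limit of maps $\pi_\theta b_{x_n}$, which are uniformly Lipschitz, so $\xi$ is $L$-Lipschitz for some $L$ independent of $\xi$; composing with $\omega_\alpha$ only changes the constant. The two points $h^{-1}g^{-1}o$ and $h^{-1}o$ are at distance $\dist_X(g^{-1}o,o) = \norm{\kappa(g)}$, because $h^{-1}$ acts by isometries. Hence
$$
\omega_\alpha\xi(h^{-1}g^{-1}o) \ge \omega_\alpha\xi(h^{-1}o) - L'\norm{\kappa(g)}.
$$

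For the second comparison I use Property~\ref{item:SVs of Tits repn} and submultiplicativity of the operator norm:
$$
N_\alpha\omega_\alpha\kappa(gh) = \log\norm{\Phi_\alpha(gh)} \le \log\norm{\Phi_\alpha(g)} + \log\norm{\Phi_\alpha(h)}.
$$
This gives $\omega_\alpha\kappa(gh)\le \omega_\alpha\kappa(h)+\omega_\alpha\kappa(g)$. Combining the two comparisons, the claim holds with
$$
R' := R + \max_{\alpha\in\theta}\big(L'\norm{\kappa(g)} + \omega_\alpha\kappa(g)\big) + 1,
$$
which depends only on $g$ and $R$.

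For the ``in particular'' part, take $\xi\in\bigcap_n\Oc_R^\theta(\ga_n)$ with $\{\ga_n\}$ escaping, and let $g\in\Ga$. Then $g\xi\in\bigcap_n \Oc_{R'}^\theta(g\ga_n)$, and $\{g\ga_n\}$ is still escaping, so $g\xi \in \La_{\theta}^{\rm con}(\Ga)$. In the contracting case we also need $\min_{\alpha \in \theta}\alpha(\kappa(g\ga_n))\to+\infty$. This follows from the standard bound $\norm{\kappa(gh)-\kappa(h)}\le\norm{\kappa(g)}$, which one can also read off from Property~\ref{item:SVs of Tits repn} by applying the second identity there to $\sigma_1/\sigma_2$ of $\Phi_\alpha(g\ga_n)$ and $\Phi_\alpha(\ga_n)$ and comparing singular values.

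I expect the only point needing care to be the uniform Lipschitz property of boundary points $\xi$, and it is immediate from Lemma~\ref{lem:Lipschitz} by passing to the limit.
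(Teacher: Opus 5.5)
Your proof is correct and follows essentially the same route as the paper. You reduce to the same $\xi$, use the uniform Lipschitz bound from Lemma~\ref{lem:Lipschitz} to compare $\omega_\alpha\xi(h^{-1}g^{-1}o)$ with $\omega_\alpha\xi(h^{-1}o)$, and use Property~\ref{item:SVs of Tits repn} to compare $\omega_\alpha\kappa(gh)$ with $\omega_\alpha\kappa(h)$, with error constants depending only on $g$. Your additional check that $\min_{\alpha\in\theta}\alpha(\kappa(g\ga_n))\to+\infty$ in the contracting conical case fills in a detail the paper leaves implicit in its ``in particular'' claim.
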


\begin{proof}
   Fix $\xi \in \partial_{\theta} X$ with $ h \xi \in  \Oc_R^{\theta}(h)$. Then  for each $\alpha \in \theta$,
   $$
   \omega_{\alpha} \xi(h^{-1} o) > \omega_{\alpha} \kappa(h) - R.
   $$
   Then by Lemma \ref{lem:Lipschitz}  and Property \ref{item:SVs of Tits repn}, there exists $C = C(g) > 0$ such that
   $$
   \abs{ \omega_{\alpha} \xi(h^{-1} o) - \omega_{\alpha} \xi(h^{-1}g^{-1} o)} < C \quad \text{and} \quad \abs{\omega_{\alpha} \kappa(h) - \omega_{\alpha} \kappa(gh) } < C.
   $$
   Hence, we have
   $$
   \omega_{\alpha} \xi(h^{-1} g^{-1} o) > \omega_{\alpha} \kappa(gh) - R - 2C.
   $$
   Setting $R' = R + 2C$, this implies $gh  \xi \in \Oc_{R'}^{\theta}(gh)$, as desired.
\end{proof}

The next lemma shows that a shadow contains the ``endpoint'' of the associated group element,  hence motivating the terminology.

\begin{lemma} \label{lem:endpoint in shadow}
   For $g \in \Gsf$ with $\min_{\alpha \in \theta} \alpha(\kappa(g)) > 0$, we have
   $$
   U_{\theta}(g) \in \Oc_R^{\theta}(g)
   $$
   for all $R > 0$. 
\end{lemma}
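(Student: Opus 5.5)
Write $g = k a \ell \in \Ksf \Asf^+ \Ksf$ with $a = e^{\kappa(g)}$, so that $U_\theta(g) = k\Psf_\theta$, viewed in $\partial_\theta X$ via $\iota$. By definition, $\Oc_R^\theta(g) = g\cdot S$ with
$$
S := \{\xi \in \partial_\theta X : \omega_\alpha\xi(g^{-1}o) > \omega_\alpha\kappa(g) - R \text{ for all } \alpha\in\theta\}.
$$
It therefore suffices to check that $\xi := g^{-1}\iota(k\Psf_\theta) \in S$. The $\Gsf$-equivariance of $\iota$ makes this computable: by the cocycle property of $B_\theta^{IW}$ and the formula $\iota(x)(ho) = B_\theta^{IW}(h^{-1},x)$, we have $g\cdot\iota(x) = \iota(gx)$. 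Hence $\xi = \iota(g^{-1}k\Psf_\theta) = \iota(\ell^{-1}a^{-1}\Psf_\theta) = \iota(\ell^{-1}\Psf_\theta)$, since $a \in \Psf_\theta$. If one prefers not to invoke equivariance, one can compute directly: $(g^{-1}\cdot\iota(x))(g^{-1}o) = \iota(x)(o) - \iota(x)(go)$, and $\iota(x)(o) = B^{IW}_\theta(e,x) = 0$.

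Next, evaluate using part (2) of Proposition~\ref{prop:embedding of partial flags}. Fix $\alpha\in\theta$ and a unit vector $u_\alpha\in V_\alpha^+$. By Property~\ref{item:boundary maps of Tits repn}, $\zeta_\alpha(\ell^{-1}\Psf_\theta) = [\Phi_\alpha(\ell^{-1})u_\alpha]$, and this vector is a unit vector since the inner product is $\Phi_\alpha(\Ksf)$-invariant. Therefore
$$
N_\alpha\omega_\alpha\xi(g^{-1}o) = \log\norm{\Phi_\alpha(g)\Phi_\alpha(\ell^{-1})u_\alpha} = \log\norm{\Phi_\alpha(k)\Phi_\alpha(a)u_\alpha} = \log\norm{\Phi_\alpha(a)u_\alpha}.
$$
By Property~\ref{item:splitting for Tits repn}, the last expression equals $N_\alpha\omega_\alpha(\kappa(g))$. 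So $\omega_\alpha\xi(g^{-1}o) = \omega_\alpha\kappa(g) > \omega_\alpha\kappa(g) - R$ for every $\alpha\in\theta$, which gives $\xi\in S$ and hence $U_\theta(g) = g\xi\in\Oc_R^\theta(g)$ for every $R>0$.

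The only step needing care is the bookkeeping of the action $g\cdot\xi = \xi\circ g^{-1} - \xi(g^{-1}o)$ together with the identification $g\cdot\iota(x) = \iota(gx)$; the latter follows from the cocycle identity for $B^{IW}_\theta$. The hypothesis $\min_{\alpha\in\theta}\alpha(\kappa(g))>0$ is used only to make $U_\theta(g)$ well defined.
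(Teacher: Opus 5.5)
Your proof is correct. It establishes the same exact identity $\omega_\alpha\,\xi(g^{-1}o)=\omega_\alpha\kappa(g)$ for $\xi=g^{-1}\cdot U_\theta(g)$ that the paper obtains, but you evaluate it by a different mechanism.

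\textbf{The paper's route.} The paper lifts $U_\theta(g)=k\Psf_\theta$ to $k\Psf_\Delta\in\partial_\Delta X$. Using part (1) of Proposition~\ref{prop:embedding of partial flags}, it writes $k\Psf_\Delta$ as the limit of $ke^{H_n}o$ along a regular Weyl chamber ray. It then computes directly from the definition of $b_x$: for large $n$ one has $\kappa(\ell^{-1}e^{H_n-H})-\kappa(ke^{H_n})=-H$. This gives the full $\mfa$-valued identity $(g^{-1}\cdot k\Psf_\Delta)(g^{-1}o)=\kappa(g)$. Finally it descends to $\theta$ via $\pi_\theta\Oc_R^\Delta(g)\subset\Oc_R^\theta(g)$ and $\pi_\theta(k\Psf_\Delta)=k\Psf_\theta$.

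\textbf{Your route.} You stay in $\partial_\theta X$ throughout. The equivariance $g\cdot\iota(x)=\iota(gx)$ follows in one line from the cocycle identity for $B_\theta^{IW}$; the paper only uses it implicitly, e.g.\ in part (3) of Proposition~\ref{prop:embedding of partial flags}. This reduces everything to $\iota(\ell^{-1}\Psf_\theta)$. You then read off the $\omega_\alpha$-components from part (2) of that proposition together with the highest-weight property~\ref{item:splitting for Tits repn}.

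\textbf{Trade-offs.} The paper's computation is more elementary: it uses only Cartan projections and the horofunction limit. Passing through the full flag and a regular ray is what makes $\kappa(e^{H_n-H})=H_n-H$ directly available. Yours is limit-free and avoids $\Fc_\Delta$. The cost is that it relies on the Tits-representation formula of part (2), which the paper itself derives from part (1) and Lemmas~\ref{lem:rank one limits in Valpha} and~\ref{lem:Titsexpress}.
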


\begin{proof} Fix a Cartan decomposition $g = ka \ell \in \Ksf \Asf^+ \Ksf$. Then $U_\theta(g) = k \Psf_\theta$. 

   Fix any sequence $\{H_n \} \subset  \mfa^+$ such that $\alpha(H_n) \to + \infty$ for all $\alpha \in \Delta$. Then viewing $k \Psf_\Delta$ as an element of $\partial_\Delta X$, Proposition~\ref{prop:embedding of partial flags} implies that 
$$
(k \Psf_{\Delta})(h o) = \lim_{n \rightarrow  + \infty} b_{k e^{H_n}o}(ho) = \lim_{n \rightarrow  + \infty} \kappa( h^{-1} k e^{H_n}) - \kappa(ke^{H_n})
$$
for all $h \in \Gsf$.
If $a = e^H$ where $H \in \mfa^+$, then 
\begin{align*}
g^{-1}\cdot (k \Psf_{\Delta} )(g^{-1} o) & = (k \Psf_{\Delta})(g g^{-1} o) - (k \Psf_{\Delta})(g o)\\
& =0 - \lim_{n \rightarrow  + \infty} \kappa( \ell^{-1}e^{-H}k^{-1}ke^{H_n}) - \kappa(ke^{H_n}) \\
& = -\lim_{n \rightarrow  + \infty} H_n-H - H_n= H = \kappa(a) = \kappa(g). 
\end{align*}
So $k \Psf_\Delta \in  \Oc_R^{\Delta}(g)$, which implies that 
\begin{equation*} 
 U_{\theta}(g) = \pi_\theta(k \Psf_\Delta )  \in \Oc_R^{\theta}(g). \qedhere
 \end{equation*}
\end{proof}

For the next result, we fix any metric generating the topology on $\overline{X}^\theta$.

\begin{lemma} \label{lem:diam decay}
For a sequence $\{g_n\} \subset \Gsf$, if $\min_{\alpha \in \theta} \alpha(\kappa(g_n)) \rightarrow +\infty$, then 
$$
 {\rm diam} \Oc_R^\theta(g_n) \rightarrow 0.
$$
\end{lemma}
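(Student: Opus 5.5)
We argue by contradiction and compactness. If the claim fails, after passing to a subsequence there are $\epsilon>0$ and points $\xi_n,\eta_n\in\Oc_R^\theta(g_n)$ with $\dist(\xi_n,\eta_n)\ge\epsilon$. Fix a Cartan decomposition $g_n=k_na_n\ell_n$. Passing to further subsequences we may assume:
\begin{itemize}
\item $k_n\to k$ and $\ell_n\to\ell$;
\item $U_\theta(g_n)\to x:=k\Psf_\theta$ and $U_{\opp^*\theta}(g_n^{-1})\to y:=\ell^{-1}w_0\Psf_{\opp^*\theta}$ (by Observation~\ref{obs:projection of inverse});
\item $\Phi_\alpha(g_n)/\norm{\Phi_\alpha(g_n)}\to T_\alpha$ for every $\alpha\in\theta$.
\end{itemize}
By Lemma~\ref{lem:rank one limits in Valpha}, $T_\alpha$ has image $\zeta_\alpha(x)$ and kernel $\zeta_\alpha^*(y)$, so $T_\alpha$ has rank one. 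The goal is to show that both $\xi_n$ and $\eta_n$ converge to $\iota(x)$, which contradicts $\dist(\xi_n,\eta_n)\ge\epsilon$.

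Write $\xi_n=g_n\cdot\zeta_n$, where $\zeta_n\in\partial_\theta X$ satisfies $\omega_\alpha\zeta_n(g_n^{-1}o)>\omega_\alpha\kappa(g_n)-R$ for all $\alpha\in\theta$. Each $\zeta_n$ is a limit of $\pi_\theta b_{h_mo}$. By a diagonal argument we may choose $h_n\in\Gsf$ with $d_X(o,h_no)\to\infty$ so that $\pi_\theta b_{h_no}$ is uniformly close to $\zeta_n$ on a large ball containing $g_n^{-1}o$ (Lemma~\ref{lem:Lipschitz} gives the equicontinuity needed). We may also arrange that $\Phi_\alpha(h_n)/\norm{\Phi_\alpha(h_n)}$ is close to some $S_{\alpha,n}$ with $\omega_\alpha\zeta_n(go)=\frac1{N_\alpha}\log\norm{\Phi_\alpha(g^{-1})S_{\alpha,n}}$, as in Lemma~\ref{lem:Titsexpress}. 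The cleanest route is to work directly with the description from Lemma~\ref{lem:Titsexpress}: for each $n$ there are norm-one $S_{\alpha,n}\in\End(V_\alpha)$ representing $\zeta_n$. The shadow condition then reads
\[
\norm{\Phi_\alpha(g_n)S_{\alpha,n}}\ \ge\ e^{-N_\alpha R}\norm{\Phi_\alpha(g_n)} .
\]
The translated point has $\omega_\alpha(g_n\cdot\zeta_n)(ho)=\frac1{N_\alpha}\log\bigl(\norm{\Phi_\alpha(h^{-1}g_n)S_{\alpha,n}}/\norm{\Phi_\alpha(g_n)S_{\alpha,n}}\bigr)$. Set $v_{\alpha,n}$ to be the unit vector in the direction of $\Phi_\alpha(g_n)S_{\alpha,n}$ applied to a maximizing unit vector. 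The shadow inequality says that $S_{\alpha,n}$ has a definite component, bounded below in terms of $R$, outside $\Phi_\alpha(\ell_n)^{-1}V_\alpha^-$.

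Since $\alpha(\kappa(g_n))\to\infty$ (this is $\sigma_1/\sigma_2$ by \ref{item:SVs of Tits repn}), $\Phi_\alpha(g_n)/\norm{\Phi_\alpha(g_n)}$ contracts everything except a definite component onto the line $\Phi_\alpha(k_n)V_\alpha^+\to\zeta_\alpha(x)$. Consequently
\[
\frac{\Phi_\alpha(g_n)S_{\alpha,n}}{\norm{\Phi_\alpha(g_n)S_{\alpha,n}}}\ \longrightarrow\ \text{a rank-one operator with image }\zeta_\alpha(x),
\]
uniformly in the choice of $S_{\alpha,n}$ subject to the shadow inequality. Hence for every $h$, $\omega_\alpha(g_n\cdot\zeta_n)(ho)\to\frac1{N_\alpha}\log\norm{\Phi_\alpha(h^{-1})v_\alpha}$ with $[v_\alpha]=\zeta_\alpha(x)$, which by Proposition~\ref{prop:embedding of partial flags}(2) equals $\omega_\alpha\iota(x)(ho)$. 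Since $\{\omega_\alpha\}_{\alpha\in\theta}$ is a basis of $\mfa_\theta^*$, this gives pointwise convergence $\xi_n(ho)\to\iota(x)(ho)$. Uniform Lipschitz bounds upgrade this to compact-open convergence, so $\xi_n\to\iota(x)$, and likewise $\eta_n\to\iota(x)$.

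The main obstacle is justifying the representation of arbitrary $\zeta_n\in\partial_\theta X$ by norm-one operators $S_{\alpha,n}$ (the limits $T_\alpha$ from Lemma~\ref{lem:Titsexpress}) and making the contraction estimate uniform in $n$. To handle this, we first pass to limits $S_{\alpha,n}\to S_\alpha$. The shadow inequality then passes to the limit (up to an $e^{-N_\alpha R}$ factor) and shows that $S_\alpha$ has image not contained in $\Phi_\alpha(\ell)^{-1}V_\alpha^-=\zeta_\alpha^*(y)$. This yields $T_\alpha S_\alpha\neq0$, and the convergence of the normalized products follows by continuity.
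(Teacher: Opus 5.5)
Your argument is correct and is essentially the paper's proof. Both represent the shadow points by norm-one operators via Lemma~\ref{lem:Titsexpress} and pass to limits. Both then use the shadow inequality to show that the rank-one limit of $\Phi_\alpha(g_n)/\norm{\Phi_\alpha(g_n)}$ composed with the limiting representative is nonzero. Hence every limit of shadow points is represented by a rank-one operator with image $\zeta_\alpha(x)$, so all such limits coincide.

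The only difference is cosmetic: the paper identifies this common limit with $\lim g_n o$, whereas you identify it with $\iota(x)$ via Proposition~\ref{prop:embedding of partial flags}(2), and these agree by part (1) of that proposition. Your exploratory middle paragraph (the diagonal choice of $h_n$ and the vectors $v_{\alpha,n}$) is not needed; the limit argument in your final paragraph already suffices.
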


\begin{proof}
Fix $R > 0$ and a sequence $\{g_n\} \subset \Gsf$ such that $\alpha(\kappa(g_n)) \to + \infty$ for all $\alpha \in \theta$. It suffices to consider the case where $g_n o \rightarrow \xi \in \partial_\theta X$. 

Suppose the lemma fails. Then, after possibly passing to a subsequence of $\{g_n\}$, we can find a sequence $\{\xi_n\} \subset \partial_{\theta} X$ such that 
$$g_n \xi_n \in \Oc_{R}^{\theta}(g_n) \text{ for all } n \in \Nb \quad \text{ and } \quad g_n \xi_n \rightarrow \eta \neq \xi.$$
 
 After passing to a subsequence, we can suppose that $\frac{\Phi_{\alpha}(g_n)}{\norm{\Phi_{\alpha}(g_n)}} \to S_{\alpha}$ in  $\End(V_\alpha)$ for all $\alpha \in \theta$. Then by Lemma \ref{lem:Titsexpress}, 
 $$
N_{\alpha} \omega_{\alpha}  \xi(h o)   = \log \norm{ \Phi_{\alpha}(h^{-1}) S_\alpha  }
$$
 for all $h \in \Gsf$ and $\alpha \in \theta$. Further, Lemma~\ref{lem:rank one limits in Valpha} implies that each $S_\alpha$ has rank one. 

 For each $n$ and $\alpha \in \theta$, using Lemma \ref{lem:Titsexpress} we can fix $T_{\alpha}^{\xi_n} \in \End(V_{\alpha})$ with $\norm{T_{\alpha}^{\xi_n}} = 1$ such that 
$$\begin{aligned}
N_{\alpha} \omega_{\alpha} g_n  \xi_n(h o) & = N_{\alpha} \omega_{\alpha} \xi_n (g_n^{-1} h o) - N_{\alpha} \omega_{\alpha} \xi_n (g_n^{-1} o) \\
&  = \log \norm{ \Phi_{\alpha}(h^{-1}) \frac{\Phi_{\alpha}(g_n) T_{\alpha}^{\xi_n}}{\norm{\Phi_{\alpha}(g_n) T_{\alpha}^{\xi_n}}} }
\end{aligned}
$$
for all $h \in \Gsf$. Passing to a subsequence, we can suppose that $T_\alpha^{\xi_n} \rightarrow T_\alpha$ in $\End(V_{\alpha})$ for all $\alpha \in \theta$. 

Since $g_n  \xi_n \in \Oc_R^{\theta}(g_n)$, we have that
$$
\omega_{\alpha} \xi_n (g_n^{-1} o) \ge \omega_{\alpha} \kappa(g_n) - R = \frac{1}{N_{\alpha}}\log \norm{\Phi_{\alpha}(g_n)}-R.
$$
So 
$$
\norm{ \frac{\Phi_{\alpha}(g_n) }{\norm{\Phi_{\alpha}(g_n)}} T_{\alpha}^{\xi_n}} \ge e^{-N_{\alpha} R},
$$
which implies that $S_\alpha T_\alpha \neq 0$. Then, since $g_n  \xi_n \rightarrow \eta$, we then have 
$$
N_{\alpha} \omega_{\alpha}  \eta(h o)  = \log \norm{ \Phi_{\alpha}(h^{-1}) \frac{S_\alpha T_{\alpha}}{\norm{S_\alpha T_{\alpha}}} }
$$
for all $\alpha \in \theta$ and $h \in \Gsf$. Notice that $\frac{S_\alpha T_{\alpha}}{\norm{S_\alpha T_{\alpha}}}$ has rank one, operator norm one, and the same image as $S_\alpha$. Since $\norm{S_{\alpha}} = 1$ as well, we have $\frac{S_\alpha T_\alpha}{\norm{S_\alpha T_\alpha}} = S_\alpha U$ for some orthogonal matrix $U \in \SL(V_\alpha)$ (recall that $V_\alpha$ has a fixed inner product). 
Thus 
$$
N_{\alpha} \omega_{\alpha}  \eta(h o)  = \log \norm{ \Phi_{\alpha}(h^{-1}) S_\alpha } = N_{\alpha} \omega_{\alpha}  \xi(h o)  
$$
for all $\alpha \in \theta$ and $h \in \Gsf$. Thus $\eta = \xi$ and we have a contradiction. 
\end{proof}

\subsection{Shadows from symmetric spaces} \label{section:symm space shadows}

On $\Fc_{\theta}$, another natural definition of shadows involves balls and flats in the symmetric space. As before, let $X = \Gsf / \Ksf$ denote the symmetric space associated to $\Gsf$ endowed with a symmetric metric and let $o = K \in X$.
 
 For $R > 0$ and $g \in \Gsf$, the \emph{symmetric space shadow} $O_R^{\theta}(o, g o) \subset \Fc_{\theta}$ of the ball $B_X(g o, R) \subset X$ of radius $R > 0$ and center $g o$  is defined by
$$
O_R^{\theta}(o, go) := \left\{ k \Psf_{\theta} \in \Fc_{\theta} : k \in \Ksf \text{ and }  k \Asf^+ o \cap B_X(g o, R) \neq \emptyset\right\}.
$$

Notice that $O_R^{\theta}(o, g o)$ is an open subset of $\Fc_{\theta}$ while $\Oc_R^{\theta}(g)$ is an open subset of a larger space, namely $\partial_{\theta} X$. The following proposition relates the two shadows and part (2) implies that the  contracting conical limit set could be defined using symmetric space shadows.

\begin{proposition} \label{prop:comparision with symm shadows}
   \
   \begin{enumerate}
      \item For any $R > 0$, there exists $r > 0$ such that 
      $$
      O_R^{\theta}(o, g o) \subset \Oc_{r}^{\theta}(g) \quad \text{for all} \quad g \in \Gsf.
      $$
      \item For any $R > 0$, there exists $r > 0$ such that if  $\{ g_n \} \subset \Gsf$ is a sequence with $\min_{\alpha \in \theta} \alpha(\kappa(g_n)) \to + \infty$ and $ \bigcap_{n \geq 0} \Oc_R^\theta(g_n) \neq \emptyset$, then 
       $$
      \bigcap_{n \geq 0} \Oc_R^\theta(g_n) =  \bigcap_{n \geq N_0} O_{r}^{\theta}(o, g_n o) \quad \text{ for some $N_0$. }
      $$
   \end{enumerate}
\end{proposition}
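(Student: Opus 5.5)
For part (1), the plan is a direct computation with the cocycle description of $\Fc_\theta \subset \partial_\theta X$ from Proposition~\ref{prop:embedding of partial flags}. First unwind the definition: since every $\xi \in \partial_\theta X$ satisfies $\xi(o)=0$, we have $(g^{-1}\xi)(g^{-1}o) = -\xi(go)$. Hence $\xi \in \Oc_r^\theta(g)$ if and only if
$$
-\omega_\alpha \xi(go) > \omega_\alpha \kappa(g) - r \quad \text{for all } \alpha \in \theta.
$$
Now let $x = k\Psf_\theta \in O_R^\theta(o,go)$, and pick $H \in \mfa^+$ with $\dist_X(ke^H o, go) < R$. Using $\iota(x)(ho) = B_\theta^{IW}(h^{-1},x)$, the cocycle property, and the fact that $B^{IW}$ vanishes on $\Ksf$ for the relevant pair, we get $\iota(x)(ke^Ho) = B_\theta^{IW}(e^{-H}k^{-1}, k\Psf_\theta) = -\pi_\theta H$. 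The elements of $\partial_\theta X$ are uniformly Lipschitz by Lemma~\ref{lem:Lipschitz}, with constant $L$ say. Also $\norm{\kappa(g) - H} \le \dist_X(go, ke^Ho) < R$, by the standard $1$-Lipschitz property of $\kappa$, which follows from the triangle inequality as in Lemma~\ref{lem:Lipschitz}. Combining these gives $-\omega_\alpha \iota(x)(go) \ge \omega_\alpha(\kappa(g)) - C R$ for a constant $C$ depending only on $L$ and the norms of the $\omega_\alpha$. So $r := CR + 1$ works, uniformly in $g$.

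For part (2), first identify the left-hand side. By Lemma~\ref{lem:diam decay}, $\operatorname{diam}\Oc_R^\theta(g_n) \to 0$, so a nonempty intersection is a single point $\xi$. By Lemma~\ref{lem:endpoint in shadow}, $U_\theta(g_n) \in \Oc_R^\theta(g_n)$, hence $U_\theta(g_n) \to \xi$. By part (1) of Proposition~\ref{prop:embedding of partial flags}, this gives $\xi = \iota(x)$ with $x = \lim U_\theta(g_n) \in \Fc_\theta$. Next, the symmetric space shadows $O_r^\theta(o,g_n o)$ also contain $U_\theta(g_n)$: with $g_n = k_n a_n \ell_n$, the chamber $k_n \Asf^+ o$ passes through $g_n o$. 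By part (1), $O_r^\theta(o, g_n o) \subset \Oc_{r'}^\theta(g_n)$, and by Lemma~\ref{lem:diam decay} these shrink. So $\bigcap_{n \ge N_0} O_r^\theta(o,g_no) \subset \{x\}$ for every $N_0$, and it remains to prove the reverse inclusion. That is, we must find $r = r(R)$ and $N_0$ with $x \in O_r^\theta(o, g_n o)$ for all $n \ge N_0$.

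This last step is the main obstacle. The plan is to translate the shadow condition through the Tits representations. By part (2) of Proposition~\ref{prop:embedding of partial flags}, the condition $x \in \Oc_R^\theta(g_n)$ reads
$$
\norm{\Phi_\alpha(g_n^{-1}) v_\alpha} \ge e^{-N_\alpha R}\, \norm{\Phi_\alpha(g_n)}^{-1}\cdots
$$
in suitably normalized form, for unit vectors $v_\alpha$ spanning $\zeta_\alpha(x)$. Via \ref{item:SVs of Tits repn} and \ref{item:splitting for Tits repn} this becomes: the component of $\Phi_\alpha(k_n^{-1}) v_\alpha$ along $V_\alpha^+$ has norm at least $e^{-N_\alpha R}$, up to a uniform factor. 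By \ref{item:boundary maps of Tits repn}, this says that $k_n^{-1}x$ lies in a fixed compact subset (depending only on $R$) of the open cell of flags transverse to $w_0\Psf_{\opp^*\theta}$. Hence $k_n^{-1} x = u_n \Psf_\theta$ with $u_n$ in a compact subset of the unipotent radical of the parabolic opposite to $\Psf_\theta$. Then $x = k_n u_n \Psf_\theta$. Conjugation by $a_n^{-1}$ contracts that opposite unipotent radical as $\min_{\alpha\in\theta}\alpha(\kappa(g_n)) \to \infty$, so $\dist_X(k_n u_n a_n o, g_n o) = \dist_X(a_n^{-1}u_n a_n o, o) \to 0$. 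Finally, write $k_n u_n = k_n' p_n$ with $k_n' \in \Ksf$ and $p_n \in \Psf_\theta$ in a compact set, so that $x = k_n'\Psf_\theta$. We then need to show that the $\theta$-cone $k_n'\Asf^+ o$ passes within bounded distance of $k_n u_n a_n o$. The point $k_n u_n a_n o$ lies on a cone toward $x$ based at $k_n u_n o$, which is boundedly close to $o$, and cones toward the same flag from nearby basepoints stay boundedly close along $\theta$-regular directions. I would prove this last comparison by the same contraction argument applied to $p_n a_n$, and this is where the threshold $N_0$ enters. This yields $x \in O_r^\theta(o,g_no)$ for $n \ge N_0$, with $r$ depending only on $R$.
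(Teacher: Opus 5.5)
\textbf{Verdict: part (1) and the reduction in part (2) are correct, but the key step of part (2) has a genuine gap, and as written it would fail.}

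Your part (1) is correct, and it is more self-contained than the paper's. From $\iota(x)(ke^Ho)=-\pi_\theta H$, the uniform Lipschitz bound on elements of $\partial_\theta X$, and the Lipschitz property of $\kappa$, you get exactly the one-sided estimate $\omega_\alpha B_\theta^{IW}(g,g^{-1}x)\ge \omega_\alpha\kappa(g)-CR$ that is needed. The paper instead imports the two-sided bound $\norm{B_\theta^{IW}(g,g^{-1}x)-\pi_\theta\kappa(g)}\le C$ from \cite[Lemma 5.7]{LO_Invariant} and \cite[Lemma 5.10]{KOW_PD}. Your reduction in part (2) to showing $x\in O_r^\theta(o,g_no)$ for $n\ge N_0$ also matches the paper.

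The gap is in that last step, which is the paper's Claim. Two things go wrong.

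\textbf{The shadow condition is inverted.} The inequality you write is the trivial direction: $\norm{\Phi_\alpha(g_n^{-1})v}\ge\norm{\Phi_\alpha(g_n)}^{-1}$ holds for every unit vector $v$ (it is the ``in particular'' of Lemma~\ref{lem:Titsexpress}). The shadow condition is the reverse bound $\norm{\Phi_\alpha(g_n^{-1})v_\alpha}<e^{N_\alpha R}\norm{\Phi_\alpha(g_n)}^{-1}$. By \ref{item:SVs of Tits repn} and \ref{item:splitting for Tits repn}, this says the $V_\alpha^-$-component of $\Phi_\alpha(k_n^{-1})v_\alpha$ has norm less than $e^{N_\alpha R-\alpha(\kappa(g_n))}$. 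Keeping only the fact that $k_n^{-1}x$ lies in a fixed compact subset of the big cell throws this decay away, and what remains is far too weak.

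\textbf{The contraction goes the other way.} For $Y\in\mfg_{-\beta}$ with $\beta\in\Sigma_\theta^+$, one has $\mathrm{Ad}(a_n^{-1})Y=e^{\beta(\kappa(g_n))}Y$. So $a_n^{-1}u_na_n$ is expanded, and $\dist_X(a_n^{-1}u_na_no,o)$ is unbounded for merely bounded $u_n$. The limit $0$ cannot be right in any case. In $\Hb^2$ it would force $g_no$ to approach the ray from $o$ to $x$, since rays with a common endpoint are asymptotic. Yet $x\in\Oc_R^\theta(g_n)$ also holds, for suitable $R$ by your part (1), when $g_no$ stays at a fixed positive distance from that ray.

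\textbf{How to repair it.} Suppose $\min_{\alpha\in\theta}\alpha(\kappa(g_n))$ exceeds a constant depending on $R$; this is where $N_0$ enters. Then the correct inequality says that $a_n^{-1}k_n^{-1}x=\ell_ng_n^{-1}x$, not $k_n^{-1}x$, lies in a compact subset of the big cell depending only on $R$.
\begin{enumerate}
\item Write $a_n^{-1}k_n^{-1}x=q_n\Psf_\theta$, with $q_n$ in a compact subset of the unipotent radical $\Nsf_\theta^-$ of the parabolic opposite to $\Psf_\theta$.
\item Set $u_n:=a_nq_na_n^{-1}$. Then $k_n^{-1}x=u_n\Psf_\theta$, and $u_n$ stays bounded because $\mathrm{Ad}(a_n)$, not $\mathrm{Ad}(a_n^{-1})$, is non-expanding on $\mathrm{Lie}(\Nsf_\theta^-)$. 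Moreover $\dist_X(k_nu_na_no,g_no)=\dist_X(q_no,o)$ is bounded; it does not tend to $0$.
\item For the final comparison, take an Iwasawa decomposition $u_n=\kappa_ne^{A_n}m_n$ with $\kappa_n\in\Ksf$ and $m_n\in\Nsf$. Then $x=k_n\kappa_n\Psf_\theta$, and
$$
\dist_X(k_n\kappa_na_no,k_nu_na_no)=\dist_X\big(e^{A_n}(a_n^{-1}m_na_n)o,o\big)
$$
is bounded, since $\mathrm{Ad}(a_n^{-1})$ does not expand $\mathrm{Lie}(\Nsf)$.
\end{enumerate}
An arbitrary bounded element of $\Psf_\theta$ in place of $e^{A_n}m_n$ would not work. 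The element $a_n$ is only $\theta$-regular, so $\mathrm{Ad}(a_n^{-1})$ can expand the negative root spaces of the Levi factor.

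With this fix your route is a direct Lie-theoretic proof of the Claim. The paper instead argues by contradiction. It uses \cite[Proposition 3.4]{KOW_SF} to see that the limits of $g_k^{-1}x_k$ and $U_{\opp^*\theta}(g_k^{-1})$ would be non-transverse. It then shows the shadow condition, together with Lemma~\ref{lem:rank one limits in Valpha} and \ref{item:boundary maps of Tits repn}, forces them to be transverse.
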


\begin{proof}
   We first prove the part (1). Fix $R > 0$. Then there exists $C > 0$ such that
   $$
   \norm{ B_{\theta}^{IW}(g, g^{-1} x) - \pi_\theta\kappa(g)} \le C
   $$
   for all $g \in \Gsf$ and $x \in O_R^{\theta}(o, g o)$ (\cite[Lemma 5.7]{LO_Invariant}, \cite[Lemma 5.10]{KOW_PD}). Hence, there exists $r > 0$ such that for each $\alpha \in \theta$,
   $$
   \omega_{\alpha} B_{\theta}^{IW}(g, g^{-1} x) > \omega_{\alpha} \kappa(g) - r
   $$
   for all $g \in \Gsf$ and $x \in O_R^{\theta}(o, g o)$. By Proposition \ref{prop:embedding of partial flags}, this implies
   $$O_R^{\theta}(o, go) \subset \Oc_r^{\theta}(g).$$

   We now prove the part (2). We start by proving the following weaker claim: 
   
   \medskip

\noindent \textbf{Claim:} For any $R > 0$, there exists $r > 0$ such that if  $\{ g_n \} \subset \Gsf$ is a sequence with $\min_{\alpha \in \theta} \alpha(\kappa(g_n)) \to + \infty$ and $ \bigcap_{n \geq 0} \Oc_R^\theta(g_n) \neq \emptyset$, then 
       $$
      \bigcap_{n \geq 0} \Oc_R^\theta(g_n) \subset  \bigcap_{n \geq N_0} O_{r}^{\theta}(o, g_n o) \quad \text{ for some $N_0$. }
      $$

      \medskip

\noindent \emph{Proof of Claim:}  Suppose not. Then for each $k \in \Nb$, there exist a sequence $\{g_{k, n} \} \subset \Gsf$ with $\min_{\alpha \in \theta} \alpha(\kappa (g_{k, n})) \to + \infty$ as $n \to + \infty$ and $\{x_k \} = \bigcap_{n \in \Nb} \Oc_R^{\theta}(g_{k, n})$ while $x_k \notin O_{k}^{\theta}(o, g_{k, n} o)$ for infinitely many $n \in \Nb$. Lemmas \ref{lem:endpoint in shadow} and \ref{lem:diam decay} imply that $x_k \in \Fc_{\theta}$.

   Then for each $k \in \Nb$, we can choose $n_k \in \Nb$ so that $x_k \notin O_k^{\theta}(o, g_{k, n_k} o)$ and $\min_{\alpha \in \theta} \alpha(\kappa(g_{k, n_k})) \ge k$ for all $k \in \Nb$. Setting $g_k := g_{k, n_k}$ for simplicity, we have
   $$
   g_k^{-1} x_k \notin g_k^{-1} O_k^{\theta}( o, g_k o) \quad \text{for all } k \in \Nb.
   $$
   After passing to a subsequence, we may assume that $g_k^{-1} x_k \to z \in \Fc_{\theta}$ and $U_{\opp^* \theta}(g_k^{-1}) \to y \in \Fc_{\opp^* \theta}$. By \cite[Proposition 3.4]{KOW_SF}, we have that $y$ and $z$ are not transverse.

   On the other hand, since $x_k \in \Oc_R^{\theta}(g_k)$, Proposition \ref{prop:embedding of partial flags} implies that for each $\alpha \in \theta$,
   $$
  \omega_{\alpha} B_{\theta}^{IW} (g_k, g_k^{-1} x_k) =\omega_{\alpha} \left( (g_k^{-1} \cdot x_k)(g_k^{-1} o)\right) > \omega_{\alpha} \kappa(g_k) - R.
   $$
For $\alpha \in \theta$ and $k \in \Nb$, fix a unit vector $v_{\alpha, k} \in V_{\alpha}$ with $\zeta_{\alpha}(g_k^{-1} x_k) = [v_{\alpha, k}]$. By Proposition~\ref{prop:embedding of partial flags},
   $$
   \omega_{\alpha} B_{\theta}^{IW} (g_k, g_k^{-1} x_k) =\frac{1}{N_{\alpha}} \log  \norm{ \Phi_{\alpha}(g_k) v_{\alpha, k}}.
   $$
Hence by Property~\ref{item:SVs of Tits repn},
   $$
   \norm{ \frac{\Phi_{\alpha}(g_k)}{\norm{\Phi_{\alpha}(g_k)}} v_{\alpha, k}} > e^{- N_{\alpha} R}.
   $$
   Taking the limit $k \to + \infty$, we may assume that $v_{\alpha, k} \to v_\alpha \in V_{\alpha}$ and $\frac{\Phi_{\alpha}(g_k)}{\norm{\Phi_{\alpha}(g_k)}} \to T_{\alpha} \in \End(V_{\alpha})$. Then we have
   $$
   \norm{T_{\alpha} v_\alpha} \ge e^{-N_{\alpha} R}.
   $$
   In particular, $\zeta_{\alpha} (z) = [v_\alpha] \notin \ker T_{\alpha} = \zeta_{\alpha}^*  (y)$ by Lemma \ref{lem:rank one limits in Valpha}. Since this holds for all $\alpha \in \theta$, $y$ and $z$ are transverse by Property \ref{item:boundary maps of Tits repn}(b). This is a contradiction, finishing the proof of the claim. \hfill $\blacktriangleleft$ 
   
   \medskip
   
Now suppose $\{g_n\}$ satisfies the hypothesis of part (2). By part (1) we can fix $R'> R$ such that $O_r^\theta(o,go) \subset \Oc_{R'}^\theta(g)$ for all $g \in \Gsf$. Then  
 $$
      \bigcap_{n \geq 0} \Oc_R^\theta(g_n) \subset  \bigcap_{n \geq N_0} O_{r}^{\theta}(o, g_n o) \subset  \bigcap_{n \geq 0} \Oc_{R'}^\theta(g_n) =  \bigcap_{n \geq 0} \Oc_R^\theta(g_n), 
      $$
   where the last equality uses Lemma \ref{lem:diam decay}. 
\end{proof}

\begin{example} \label{example:nonshrinking conical}
Consider  $\Gsf = \PSL(2, \Rb) \times \PSL(2, \Rb)$. Then
\begin{itemize}
   \item the associated symmetric space is $\Hb^2 \times \Hb^2$ with the symmetric space distance $\dist ((x_1, x_2), (y_1, y_2)) = \sqrt{ \dist_{\Hb^2}(x_1, y_1)^2 + \dist_{\Hb^2}(x_2, y_2)^2 }$,
   \item $\Delta = \{ \alpha_1, \alpha_2 \}$, where $\alpha_1$ and $\alpha_2$ are simple roots for the first and the second $\PSL(2, \Rb)$-components respectively, and
   \item the Furstenberg boundary is $\partial \Hb^2 \times \partial \Hb^2$.
\end{itemize} 

Setting
$$
\Ga := \Ga_0 \times \{ \id \} < \Gsf
$$
where $\Ga_0 < \PSL(2, \Rb)$ is a cocompact lattice, we compute its limit set and conical limit set.

First, since all elements of $\Ga$ have identity on their second component, we have
$$
\La_{\Delta}(\Ga) = \emptyset.
$$

Fix a basepoint $o = (o_1, o_2) \in \Hb^2 \times \Hb^2$. Then for $R > 0$ and $\ga = (\ga_0, \id) \in \Ga$, the symmetric space shadow is
$$
O_R (o, \ga o) = \left\{ (x, y) \in \partial \Hb^2 \times \partial \Hb^2 : \begin{matrix}
\exists \text{ geodesic ray from } o_1 \text{ to } x \text{ in } \Hb^2 \\
\text{intersecting } B_{\Hb^2}(\ga_0 o_1, R)
\end{matrix} \right\}.
$$
In particular, $O_R(o, \ga o) = O_R(o_1, \ga_0 o_1) \times \partial \Hb^2$. Hence for any escaping $\{ \ga_n\} \subset \Ga$ and $R > 0$, $\bigcap_{n \ge 1} O_R (o, \ga_n o)$ is not a singleton as long as it is non-empty. In addition, by Proposition \ref{prop:comparision with symm shadows} and the fact that $\Ga_0$ is a cocompact lattice, the above observation implies that
$$
\La_{\Delta}^{\rm con}(\Ga) = \partial \Hb^2 \times \partial \Hb^2.
$$

\end{example}

\subsection{Proofs of Propositions~\ref{prop:concon limit set for general groups} and~\ref{prop:concon limit set for transverse groups}\label{sec:proof of shadow propositions}}

\begin{proof}[Proof of Proposition~\ref{prop:concon limit set for general groups}] Lemma \ref{lem:shadow translate shadow} implies that $\Lambda^{\rm concon}_\theta(\Gamma)$ and $\La_{\theta}^{\rm con}(\Ga)$ are $\Gamma$-invariant. Lemmas \ref{lem:endpoint in shadow} and \ref{lem:diam decay} imply that $\Lambda^{\rm concon}_\theta(\Gamma) \subset \Lambda_\theta(\Gamma)$. 
\end{proof} 

\begin{proof}[Proof of Proposition~\ref{prop:concon limit set for transverse groups}] The first assertion in part (1) follows from the definition of a transverse group and the ``hence'' part follows from Proposition~\ref{prop:concon limit set for general groups} and the first assertion.

For part (2), first suppose that $x \in \La_{\theta}^{\rm con}(\Ga)= \La_{\theta}^{\rm concon}(\Ga)$. Then there exist $R > 0$ and a sequence $\{ \ga_n \} \subset \Ga$ such that $\alpha(\kappa(\ga_n)) \to + \infty$ for all $\alpha \in \theta$ and $x \in \bigcap_{n \ge 1} \Oc_R^{\theta}(\ga_n)$. Then Lemmas \ref{lem:endpoint in shadow} and \ref{lem:diam decay} imply that $U_{\theta}(\ga_n) \to x$. Then it follows from Proposition \ref{prop:comparision with symm shadows}(2) that $x \in \bigcap_{n \ge 1} O_r^{\theta}(o, \ga_n o)$ for some $r > 0$. By \cite[Lemma 5.8]{KOW_PD}, for any $y \in \Fc_{\opp^* \theta}$ transverse to $x$, the sequence $\ga_n^{-1}(x, y)$ converges to a transverse pair in $\Fc_{\theta} \times \Fc_{\opp^* \theta}$. Since any two distinct points in $\La_{\theta \cup \opp^* \theta}(\Gamma)$ are transverse and the projections $\La_{\theta \cup \opp^* \theta}(\Gamma) \to \La_{\theta}(\Gamma)$ and $\La_{\theta \cup \opp^* \theta}(\Gamma) \to \La_{\opp^* \theta}(\Gamma)$ are $\Ga$-equivariant homeomorphisms (Observation \ref{obs:symmetry theta}), this implies that $x$ is a conical limit point in the sense of the convergence action of $\Ga$ on $\La_{\theta}(\Ga)$.

Conversely, suppose that $x \in \La_{\theta}(\Ga)$ is a conical limit point in the sense of the convergence action of $\Ga$ on $\La_{\theta}(\Ga)$. Then there exist $a,b \in \Lambda_\theta(\Gamma)$ distinct and an escaping sequence $\{\gamma_n\} \subset \Gamma$ such that $\gamma_n^{-1}x \rightarrow a$ and $\gamma_n^{-1}y \rightarrow b$ for all $y \in \Lambda_\theta(\Gamma) \smallsetminus \{x\}$. Then Proposition \ref{prop:NS dynamics}  implies that $U_\theta(\gamma_n) \rightarrow x$. Further, since the projections  $\La_{\theta \cup \opp^* \theta}(\Gamma) \to \La_{\theta}(\Gamma)$ and $\La_{\theta \cup \opp^* \theta}(\Gamma) \to \La_{\opp^* \theta}(\Gamma)$ are $\Ga$-equivariant homeomorphisms,  $\ga_n^{-1}(x, y)$ converges to a transverse pair in  $\Fc_{\theta} \times \Fc_{\opp^* \theta}$ for any $y \in \La_{\opp^* \theta}(\Ga)$ which is transverse to $x$. By \cite[Lemma 5.8]{KOW_PD}, this implies that $x \in \bigcap_{n \ge 1} O_R^{\theta}(o, \ga_n o)$ for some $R > 0$. Then by Proposition \ref{prop:comparision with symm shadows}(1), $x \in \La_{\theta}^{\rm concon}(\Ga)$.
\end{proof}


\section{Verifying the PS-system axioms}\label{sec:verifying the PS axioms} 


In this section, we consider a discrete subgroup $\Ga < \Gsf$ and verify the axioms of PS-systems for the boundaries $\overline{X}^\theta$.

\begin{theorem} \label{thm:Zdense PS}

Suppose $\theta \subset \Delta$ and $\phi \in \mfa_\theta^*$. 
   If $\Gamma < \Gsf$ is strongly $(\Phi_{\alpha})_{\alpha \in \theta}$-irreducible and $\mu$ is a coarse $(\Gamma, \phi, \delta)$-Patterson--Sullivan measure on $\partial_{\theta} X$, then $(\partial_{\theta} X, \Gamma, \phi \circ B_{\theta}, \mu)$ is a PS-system, with  magnitude $\norm{\ga}_{\phi} := \phi(\kappa(\ga))$ and the  $R$-shadows $\Oc_R^{\theta}(\ga)$ for each $\ga \in \Ga$. Moreover, \ref{item:baire} holds.

\end{theorem}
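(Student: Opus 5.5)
We check the axioms in turn with $\sigma := \phi \circ B_\theta$ restricted to $\Gamma \times \partial_\theta X$. By the remark after Proposition~\ref{prop:compactification}, $\sigma$ is an honest cocycle, so a $0$-coarse-cocycle. The coarse PS-measure condition matches Definition~\ref{defn: PS on X theta}, since $B_\theta(\gamma^{-1},\xi) = \xi(\gamma o)$. Shadows are open by continuity of $\xi \mapsto \xi(g^{-1}o)$ and of the $\Gsf$-action. They are non-empty: if $\min_{\alpha\in\theta}\alpha(\kappa(\gamma))>0$, Lemma~\ref{lem:endpoint in shadow} applies. In general, for $k\Psf_\Delta$ with $\gamma = k a\ell$, the computation in that proof gives $\gamma^{-1}(k\Psf_\Delta)(\gamma^{-1}o) = \kappa(\gamma)$, which does not use the root condition.

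\textbf{(PS1).} For fixed $\gamma$, Lemma~\ref{lem:Lipschitz} gives $\norm{\xi(\gamma^{-1} o)} \le L\, \dist_X(o,\gamma^{-1}o)$ for every $\xi \in \partial_\theta X$. Hence $|\sigma(\gamma,\xi)|$ is bounded uniformly in $\xi$.

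\textbf{(PS2).} Write $\phi = \sum_{\alpha\in\theta} c_\alpha \omega_\alpha$. For $\xi \in \gamma^{-1}\Oc_R^\theta(\gamma)$ we have
\[
\omega_\alpha\kappa(\gamma) - R < \omega_\alpha \xi(\gamma^{-1}o) \le \omega_\alpha \kappa(\gamma),
\]
where the upper bound is Lemma~\ref{lem:Titsexpress}. Since $\sigma(\gamma,\xi) = \phi\,\xi(\gamma^{-1}o)$, we get $|\sigma(\gamma,\xi) - \phi(\kappa(\gamma))| \le R\sum_\alpha |c_\alpha|$. This estimate uses only $\omega_\alpha$ with $\alpha\in\theta$, consistent with $\phi\in\mfa_\theta^*$.

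\textbf{(PS3) and (PS5).} Since (PS5) with $m=1$, $h_1=\id$ implies (PS3), it suffices to prove (PS5). This is where irreducibility enters and is the main step. First identify the limit set $Z$. Suppose $\gamma_n^{-1}\Oc_{R_n}^\theta(\gamma_n)$ has complement converging to $Z$. Pass to subsequences with $\Phi_\alpha(\gamma_n^{-1})/\norm{\Phi_\alpha(\gamma_n^{-1})} \to S_\alpha$ for each $\alpha\in\theta$. Note that $\Phi_\alpha(\gamma_n)^* = \Phi_\alpha(\gamma_n^*)$ up to the $\Ksf$-invariant inner product structure, so it is convenient to work with $\gamma_n$ acting on the right.

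Represent each $\eta\in\partial_\theta X$ as in Lemma~\ref{lem:Titsexpress}: there are norm-one $T^\eta_\alpha$ with $N_\alpha\omega_\alpha\eta(ho) = \log\norm{\Phi_\alpha(h^{-1})T^\eta_\alpha}$. The choice of $T^\eta_\alpha$ is defined up to the ambiguity exploited in the proof of Lemma~\ref{lem:diam decay}. The shadow condition then reads
\[
\norm{\Phi_\alpha(\gamma_n)T^\eta_\alpha}/\norm{\Phi_\alpha(\gamma_n)} > e^{-N_\alpha R_n}
\]
for all $\alpha$. As $R_n\to\infty$, any $\eta$ outside the shadows for infinitely many $n$ has some $\alpha$ with $\norm{\Phi_\alpha(\gamma_n)T^\eta_\alpha}/\norm{\Phi_\alpha(\gamma_n)}\to 0$. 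Passing to limits, I expect to show the following: each $x \in Z$ admits some $\alpha\in\theta$ and a limit operator $T_\alpha^x$ whose image lies in a fixed proper subspace $W_\alpha = \ker P_\alpha$. Here $P_\alpha$ is a nonzero limit of $\Phi_\alpha(\gamma_n)/\norm{\Phi_\alpha(\gamma_n)}$, which exists along a subsequence. Conversely, $Z$ is contained in
\[
\{\eta : \exists \alpha,\ \operatorname{Im}(T^\eta_\alpha)\subset W_\alpha\},
\]
where the image is a well-defined subspace invariant under the normalization ambiguity.

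\textbf{Conclusion of (PS5).} Given $x\in Z$ and $h_1,\dots,h_m$, we have $\operatorname{Im}(T^{gx}_\alpha) = \Phi_\alpha(g)\operatorname{Im}(T^x_\alpha)$, by the formula for $g\cdot\xi$ as in the proof of Lemma~\ref{lem:diam decay}. So $gx\in h_iZ$ forces $\Phi_\alpha(h_i^{-1}g)\operatorname{Im}(T^x_\alpha) \subset W_\alpha$ for some $\alpha$. Choose a nonzero vector $v_\alpha\in\operatorname{Im}(T^x_\alpha)$ for every $\alpha\in\theta$. Apply Lemma~\ref{lem:rho irr implies flag irr} to the finite family of pairs $(v_\alpha, \Phi_\alpha(h_i)W_\alpha)$, indexed by $\alpha\in\theta$ and $i\le m$. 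This yields $g\in\Gamma$ with $\Phi_\alpha(g)v_\alpha\notin \Phi_\alpha(h_i)W_\alpha$ for all $\alpha$ and $i$, hence $gx\notin\bigcup_i h_iZ$. The delicate point, which I expect to be the main obstacle, is making the description of $Z$ precise. Hausdorff limits of complements of shadows, together with the non-uniqueness of $T^\eta_\alpha$, must be handled by compactness arguments like those in Lemma~\ref{lem:diam decay}, to show that every point of $Z$ really satisfies the "image in $W_\alpha$" condition.
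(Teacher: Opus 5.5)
Your proposal is correct and follows the paper's proof essentially step for step: (PS1) comes from Lemma~\ref{lem:Lipschitz}, (PS2) from Lemma~\ref{lem:Titsexpress} plus the definition of shadows, and (PS3) is the case $m=1$, $h_1=\id$ of (PS5); (PS5) itself is proved by taking limits $P_\alpha$ of $\Phi_\alpha(\gamma_n)/\norm{\Phi_\alpha(\gamma_n)}$, placing $Z$ inside the set of points having a representative killed by some $P_\alpha$, and applying Lemma~\ref{lem:rho irr implies flag irr} to the subspaces $\Phi_\alpha(h_i)\ker P_\alpha$. The step you flag as the main obstacle is short, and it does not need the image of $T^\eta_\alpha$ to be well defined, which you assert without proof: since $\norm{\Phi_\alpha(\gamma_n)T}=e^{N_\alpha\omega_\alpha\eta(\gamma_n^{-1}o)}$ for every representative $T$ of $\eta$, the condition $P_\alpha T=0$ depends only on $\eta$; for $x\in Z$, take $\eta_n$ in the complements converging to $x$, fix $\alpha$ along a subsequence, and let $T$ be a limit of representatives of the $\eta_n$, which then represents $x$ and satisfies $\norm{P_\alpha T}\le\lim_n e^{-N_\alpha R_n}=0$.
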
   

For transverse groups, we can show that the system is well-behaved. 

\begin{theorem} \label{thm:transverse well-behaved}
Suppose $\theta \subset \Delta$, $\phi \in \mfa_\theta^*$, and $\Gamma < \Gsf$ is a strongly $(\Phi_{\alpha})_{\alpha \in \theta}$-irreducible  $\Psf_{\theta}$-transverse group. Let $\mu$ be a coarse $(\Gamma, \phi, \delta)$-Patterson--Sullivan measure on $\partial_{\theta} X$. Then the PS-system $(\partial_{\theta}X , \Gamma, \phi \circ B_{\theta}, \mu)$ in Theorem \ref{thm:Zdense PS} is well-behaved with respect to the trivial hierarchy $\mathscr{H}(R) \equiv \Ga$.

\end{theorem}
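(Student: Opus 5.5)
Since Theorem~\ref{thm:Zdense PS} already gives axioms (PS1)--(PS3) and \ref{item:baire}, it remains to verify \ref{item:properness}, \ref{item:shadow inclusion}, \ref{item:intersecting shadows} and \ref{item:diam goes to zero ae} with $\mathscr{H}(R) \equiv \Gamma$. Two are formal. For \ref{item:shadow inclusion}, the defining inequality $\omega_\alpha\xi(g^{-1}o) > \omega_\alpha\kappa(g) - R$ only gets weaker as $R$ increases, so $\Oc_{R_1}^\theta(\gamma) \subset \Oc_{R_2}^\theta(\gamma)$. For \ref{item:diam goes to zero ae}, transversality gives $\min_{\alpha\in\theta}\alpha(\kappa(\gamma_n)) \to +\infty$ along any escaping sequence, and Lemma~\ref{lem:diam decay} then yields ${\rm diam}\, \Oc_R^\theta(\gamma_n) \to 0$ for every point, so we may take $M' = M$.

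For \ref{item:properness}, $\Gamma$ is countable because it is discrete in a second countable Lie group. Finiteness of $\{\gamma : \phi(\kappa(\gamma)) \le T\}$ is not automatic for a general $\phi$. I would argue by contradiction: if infinitely many $\gamma_n$ satisfy $\phi(\kappa(\gamma_n)) \le T$, then $\alpha(\kappa(\gamma_n)) \to \infty$ for all $\alpha \in \theta$. Pass to a subsequence with $\gamma_n o \to \xi \in \Lambda_\theta(\Gamma)$ and $U_{\opp^*\theta}(\gamma_n^{-1})$ convergent. The aim is to show $\phi(\kappa(\gamma_n)) \to +\infty$, using the existence of $\mu$ together with (PS2) and the shadow lemma (Proposition~\ref{prop.shadowlemma}): the measure of shadows is bounded below by $e^{-\delta\phi(\kappa(\gamma_n))}$, and shadows of an escaping sequence shrink. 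This should mirror the argument for transverse groups in \cite{CZZ2024}, which shows $\phi$ is proper on $\kappa(\Gamma)$ whenever a PS-measure exists (given $\delta>0$; the case $\delta = 0$ needs a separate remark). Alternatively, if the standing conventions implicitly require $\phi \in$ the interior of the dual of the $\theta$-limit cone, one can simply cite that properness.

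The main step is \ref{item:intersecting shadows}. Given $\alpha' = \gamma_1$ and $\beta = \gamma_2$ with $\phi(\kappa(\gamma_1)) \le \phi(\kappa(\gamma_2))$ and $\Oc_R^\theta(\gamma_1) \cap \Oc_R^\theta(\gamma_2) \ne \emptyset$, I would first reduce to the case where both lengths are large. Small elements form a finite set, and the statement for them follows from Lemma~\ref{lem:shadow translate shadow} and the Lipschitz bounds.

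For large elements I would use a compactness and contradiction argument combined with Proposition~\ref{prop:comparision with symm shadows} and the convergence-group structure. Write $\beta = \gamma_1\eta$. The point is to show that the intersecting shadow condition, together with antipodality of $\Lambda_{\theta\cup\opp^*\theta}(\Gamma)$, forces $\gamma_1 o$ to lie uniformly close to a Weyl cone from $o$ toward $\gamma_2 o$. Suppose instead that for sequences $(\gamma_{1,n}, \gamma_{2,n})$ the quantity
\[
\phi(\kappa(\gamma_{2,n})) - \phi(\kappa(\gamma_{1,n})) - \phi(\kappa(\gamma_{1,n}^{-1}\gamma_{2,n}))
\]
is unbounded. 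Translating by $\gamma_{1,n}^{-1}$ and passing to limits gives points of $\Lambda_\theta(\Gamma)$ and $\Lambda_{\opp^*\theta}(\Gamma)$ that are non-transverse but distinct, which contradicts transversality. This is the same mechanism as in the Claim inside the proof of Proposition~\ref{prop:comparision with symm shadows}, using Lemma~\ref{lem:rank one limits in Valpha}.

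The additivity estimate then comes from the vector-valued form $\|\kappa(\gamma_2) - \kappa(\gamma_1) - \kappa(\gamma_1^{-1}\gamma_2)\| \le C$ for such pairs, which is the standard ``diamond'' estimate for transverse groups. The shadow inclusion $\Oc_R(\beta) \subset \Oc_{R'}(\alpha')$ follows from additivity applied to each $\omega_\alpha$ together with the cocycle identity for $B_\theta$. This step is the main obstacle, and I would model it on the analogous lemma in \cite{CZZ2024}/\cite{KOW_PD}.
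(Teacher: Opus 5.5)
Your arguments for \ref{item:shadow inclusion} and \ref{item:diam goes to zero ae} match the paper. However, the two substantive axioms are not actually proved.

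\textbf{Property \ref{item:properness}.} Your sketch stops where the work begins. The Shadow Lemma gives $\mu(\Oc_R^\theta(\ga_n))\ge C^{-1}e^{-\delta T}$. Lemmas~\ref{lem:endpoint in shadow} and~\ref{lem:diam decay} make these shadows shrink to $x=\lim U_\theta(\ga_n)\in\La_\theta(\Ga)$, so $\mu(\{x\})>0$. But an atom is not by itself a contradiction, since coarse PS measures can be atomic. Moreover, the available properness results concern measures on $\La_\theta(\Ga)$, not on $\partial_\theta X$. The missing step is a reduction. Restrict $\mu$ to the $\Ga$-invariant set $\Ga x\subset\La_\theta(\Ga)$ and normalize; this is a coarse $(\Ga,\phi,\delta)$-PS measure supported on the limit set, and \cite[Proposition 10.1]{BCZZ_counting} then gives finiteness. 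This works for every $\delta\ge 0$, so no separate $\delta=0$ case is needed. Your fallback assumption on $\phi$ is not among the hypotheses, since $\phi\in\mfa_\theta^*$ is arbitrary.

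\textbf{Property \ref{item:intersecting shadows}.} Your reduction of the shadow inclusion to additivity is correct, and it is shorter than the paper's Lemma~\ref{lem:transversePS7_1}. Take $\xi=\ga_2\eta\in\Oc_R^\theta(\ga_2)$. The cocycle identity gives $B_\theta(\ga_1,\ga_1^{-1}\xi)=B_\theta(\ga_2,\eta)-B_\theta(\ga_1^{-1}\ga_2,\eta)$, and Lemma~\ref{lem:Titsexpress} gives $\omega_\alpha B_\theta(\ga_1^{-1}\ga_2,\eta)\le\omega_\alpha\kappa(\ga_1^{-1}\ga_2)$. So a bound $\omega_\alpha\kappa(\ga_2)\ge\omega_\alpha\kappa(\ga_1)+\omega_\alpha\kappa(\ga_1^{-1}\ga_2)-C$ for all $\alpha\in\theta$ yields $\Oc_R^\theta(\ga_2)\subset\Oc_{R+C}^\theta(\ga_1)$.

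That additivity, however, is left unproved, and your sketch omits the ingredient that makes it true. Failure of additivity only tells you, via Lemma~\ref{lem:rank one limits in Valpha} and after passing to subsequences, that $\lim U_\theta(\ga_{1,n}^{-1}\ga_{2,n})$ and $\lim U_{\opp^*\theta}(\ga_{1,n}^{-1})$ are non-transverse. Intersecting shadows alone cannot exclude this. For example, take $\ga_2=\id$: then $\Oc_R^\theta(\id)=\partial_\theta X$ meets every shadow, yet $\omega_\alpha(\kappa(\ga_1)+\kappa(\ga_1^{-1}))\to+\infty$. So nothing in your sketch produces the ``distinct'' points you want.

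What is needed is the ordering $\phi(\kappa(\ga_1))\le\phi(\kappa(\ga_2))$, used through properness:
\begin{itemize}
\item Lemma~\ref{lem:BCZZ converging to the same point} shows that $U_\theta(\ga_2^{-1})$ and $U_\theta(\ga_2^{-1}\ga_1)$ have the same limit. Otherwise they would be transverse, forcing $\phi\kappa(\ga_1)\approx\phi\kappa(\ga_2)+\phi\kappa(\ga_2^{-1}\ga_1)$ with $\phi\kappa(\ga_2^{-1}\ga_1)\to+\infty$, contrary to the ordering.
\item This identifies the kernels of the limits of $\Phi_\alpha(\ga_2)/\norm{\Phi_\alpha(\ga_2)}$ and $\Phi_\alpha(\ga_1^{-1}\ga_2)/\norm{\Phi_\alpha(\ga_1^{-1}\ga_2)}$. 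That gives $\ga_1^{-1}\Oc_R^\theta(\ga_2)\subset\Oc_{R'}^\theta(\ga_1^{-1}\ga_2)$ (Lemma~\ref{lem: the claim needed for PS7}).
\item A common point of the two shadows then gives three near-equalities, and combining them with the cocycle identity yields the additivity (Lemma~\ref{lem:transversePS7_2}).
\end{itemize}

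Two further points. The $\mfa$-valued estimate you quote is false for $\theta\neq\Delta$, because $\Psf_\theta$-transversality gives no control of the $\Delta\smallsetminus\theta$ directions. Only the components $\omega_\alpha$, $\alpha\in\theta$, hold, and these are all you need. Also, Proposition~\ref{prop:comparision with symm shadows}(2) concerns nested intersections along contracting sequences. It does not apply to a single pair of intersecting shadows whose common point may lie outside $\Fc_\theta$, so it cannot supply your Weyl-cone picture.
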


Using work in~\cite{CZZ2024} we will show that for transverse groups, divergence of the Poincar\'e series implies that there is a unique PS-measure.

\begin{theorem}\label{thm:uniqueness for transverse}  Suppose $\theta \subset \Delta$ and $\Gamma < \Gsf$ is a strongly $(\Phi_{\alpha})_{\alpha \in \theta}$-irreducible  $\Psf_{\theta}$-transverse group. If  $\phi \in \mfa_\theta^*$, $\delta^{\phi}(\Ga) < + \infty$, and  
$$
\sum_{\ga \in \Ga} e^{-\delta^{\phi}(\Ga) \phi(\kappa(\ga))} = + \infty,
$$
then there is a  unique $(\Ga, \phi, \delta^{\phi}(\Ga))$-Patterson--Sullivan measure $\mu$ on $\partial_\theta X$, the $\Gamma$-action on $(\partial_\theta X, \mu)$ is ergodic, and 
   $$
   \mu(\La_{\theta}^{\rm con}(\Ga)) = 1
   $$
(in particular, $\mu$ is supported on $\Fc_\theta$).
\end{theorem}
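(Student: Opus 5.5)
Existence comes first. Since $\delta^\phi(\Ga)<+\infty$, Proposition~\ref{prop:PS meas exists} gives a $(\Ga,\phi,\delta^\phi(\Ga))$-Patterson--Sullivan measure on $\partial_\theta X$.

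Now fix any such $\mu$ and write $\delta:=\delta^\phi(\Ga)$. By Theorem~\ref{thm:transverse well-behaved}, $(\partial_\theta X,\Ga,\phi\circ B_\theta,\mu)$ is a PS-system that is well-behaved with respect to the trivial hierarchy. The magnitude is $\norm{\ga}_\phi=\phi(\kappa(\ga))$, so the divergence hypothesis is exactly the hypothesis of Theorem~\ref{thm:div => pos meas conical}. Hence the set $E$ of that theorem, which is precisely $\La^{\rm con}_\theta(\Ga)$ as defined in \eqref{eqn:conical limit set classic}, has positive $\mu$-measure.

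Next I upgrade this to full measure using the argument of the Remark after Theorem~\ref{thm:div => pos meas conical}. By Lemma~\ref{lem:shadow translate shadow}, $\La^{\rm con}_\theta(\Ga)$ is $\Ga$-invariant. If $\mu(\La^{\rm con}_\theta(\Ga))<1$, then the normalized restriction of $\mu$ to the complement is again a $(\Ga,\phi,\delta)$-PS measure. Applying the same theorem to it yields a contradiction. Therefore $\mu(\La^{\rm con}_\theta(\Ga))=1$. Since $\Ga$ is strongly irreducible and hence non-elementary (I will need to check this: irreducibility should rule out $\#\La_\theta(\Ga)\le 2$, though this may need a short argument), Proposition~\ref{prop:concon limit set for transverse groups} applies. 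It gives $\La^{\rm con}_\theta(\Ga)\subset\La_\theta(\Ga)\subset\Fc_\theta$, where $\Fc_\theta$ is embedded via $\iota$. So $\mu$ is supported on $\iota(\Fc_\theta)$.

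By Proposition~\ref{prop:embedding of partial flags}, on $\iota(\Fc_\theta)$ we have $\phi\,\iota(x)(\ga o)=\phi B^{IW}_\theta(\ga^{-1},x)$. Thus $\iota^{-1}_*\mu$ is a Patterson--Sullivan measure in the sense of Definition~\ref{defn:Quints definition}, supported on the conical limit set. I then invoke the uniqueness and ergodicity results for transverse groups with divergent Poincaré series: the Hopf--Tsuji--Sullivan dichotomy of \cite{CZZ2024}, together with its extension in \cite{KOW_PD}. These say that such a measure on $\Fc_\theta$ of dimension $\delta$ is unique and ergodic. Pushing forward by $\iota$ then gives uniqueness and ergodicity on $\partial_\theta X$, since every PS-measure on $\partial_\theta X$ has just been shown to be $\iota_*$ of one on $\Fc_\theta$.

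The main obstacle is this last step: matching hypotheses with the cited dichotomy. The results of \cite{CZZ2024} are phrased for measures on $\Lambda_\theta(\Ga)$, possibly requiring $\theta$ symmetric, and \cite{KOW_PD} needs some irreducibility. I will first reduce to symmetric $\theta\cup\opp^*\theta$ via Observation~\ref{obs:symmetry theta}, lifting the measure through the homeomorphism of limit sets. This lift is compatible with the cocycles because $B^{IW}_\theta=\pi_\theta B^{IW}_{\theta\cup\opp^*\theta}$. If it becomes necessary, I would instead prove ergodicity directly from the shadow lemma (Proposition~\ref{prop.shadowlemma}) by the standard density-point argument on conical points, where shadows shrink by Lemma~\ref{lem:diam decay}. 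Uniqueness would then follow, because any two such measures are mutually absolutely continuous with $\Ga$-invariant Radon--Nikodym derivative.
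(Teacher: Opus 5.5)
Your proposal is correct and follows essentially the same route as the paper. Theorem~\ref{thm:transverse well-behaved} together with Theorem~\ref{thm:div => pos meas conical} gives $\mu(\La^{\rm con}_\theta(\Ga))>0$, $\Ga$-invariance and the normalized-restriction trick upgrade this to full measure, and uniqueness and ergodicity then come from the results of \cite{CZZ2024} for measures supported on $\La_\theta(\Ga)$. Your non-elementarity check is correct, since strong irreducibility forbids $\Ga$ from virtually fixing a point of $\La_\theta(\Ga)$, but it is not needed: the inclusion $\La^{\rm con}_\theta(\Ga)\subset\La_\theta(\Ga)$ follows from Proposition~\ref{prop:concon limit set for general groups} and the definition of a transverse group alone, which is how the paper argues.
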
 

\begin{remark} Previously, Canary, Zhang, and the second author proved in \cite{CZZ2024}  uniqueness for measures supported on the limit set $\Lambda_\theta(\Gamma) \subset \Fc_\theta$. When $\Ga < \Gsf$ is Zariski dense, the first author, Oh, and Wang  previously proved uniqueness for measures supported on $\Fc_{\theta}$ in \cite{KOW_PD}. Since a  Zariski dense subgroup is   strongly $(\Phi_{\alpha})_{\alpha \in \theta}$-irreducible, the above theorem generalizes this uniqueness result. \end{remark}

In the arguments that follow it will be helpful to have the following terminology. Given $\xi \in \partial_\theta X$, we say that $(T_\alpha^\xi)_{\alpha \in \theta} \in \prod_{\alpha \in \theta} \End(V_\alpha)$  \emph{represents} $\xi$ if 
\begin{itemize}
\item $N_{\alpha} \omega_{\alpha} \xi(go)  = \log \norm{\Phi_{\alpha}(g^{-1})T_\alpha^\xi}$ for all $g \in \Gsf$ and all $\alpha \in \theta$, 
\item each $T_\alpha^\xi$ is a limit of elements of the form $\frac{1}{\norm{\Phi_\alpha(g)}}\Phi_\alpha(g)$.
\end{itemize}
Lemma \ref{lem:Titsexpress} implies that every element $\xi \in \partial_\theta X$ is represented by such a list, but the representation is not unique. For instance, one can always right-multiply by  elements in $\Phi_\alpha(\Ksf)$. 

\subsection{Proof of Theorem~\ref{thm:Zdense PS}} We verify Property \ref{item:coycles are bounded}, Property \ref{item:almost constant on shadows}, and Property \ref{item:baire}. 
Since Property \ref{item:baire} implies Property \ref{item:empty Z intersection}, this completes the proof of the theorem. 

Property \ref{item:coycles are bounded}: Recall that for $g \in \Gsf$ and $\xi \in \partial_{\theta} X$, $B_\theta(g, \xi) = \xi(g^{-1}o)$. Since $\xi(o) =0$ for all $\xi \in \partial_\theta X$,  Lemma \ref{lem:Lipschitz} implies Property \ref{item:coycles are bounded}. 

Property \ref{item:almost constant on shadows}: By Lemma \ref{lem:Titsexpress}, for any $\xi \in \partial_{\theta} X$, $g \in \Gsf$, and $\alpha \in \theta$ we have
$$
\omega_{\alpha} B_{\theta}(g, \xi)=\omega_{\alpha} \xi(g^{-1}o)  \le \omega_{\alpha} \kappa(g).
$$
Further, for $\xi \in g^{-1} \Oc_R^{\theta}(g)$, we have 
$$
\omega_{\alpha} \kappa(g) - R \le \omega_\alpha \xi(g^{-1}o) = \omega_{\alpha} B_{\theta}(g, \xi).
$$
Since $\phi \in \fa_{\theta}^*$ is a linear combination of the $\{ \omega_{\alpha} : \alpha \in \theta \}$, this implies  Property \ref{item:almost constant on shadows}.

Property \ref{item:baire}:  Let $\{\ga_n\} \subset \Ga$ and $R_n > 0$ be sequences such that $R_n \to + \infty$ and 
$$
\left[ \partial_{\theta} X \smallsetminus \ga_{n}^{-1} \Oc_{R_n}^{\theta}(\ga_{n}) \right] \to Z
$$
with respect to the Hausdorff distance. 

Then 
$$\begin{aligned}
\partial_{\theta} X \smallsetminus \ga_{n}^{-1} \Oc_{R_n}^{\theta}(\ga_{n}) & = \bigcup_{\alpha \in \theta} \left\{   \xi \in \partial_{\theta} X : \omega_{\alpha} \xi(\ga_{n}^{-1} o) \leq \omega_{\alpha} \kappa(\ga_{n}) - R_n \right\} \\
& = \bigcup_{\alpha \in \theta} \left\{\xi \in \partial_{\theta} X : \norm{ \frac{\Phi_{\alpha}(\ga_{n}) T_{\alpha}^{\xi}}{\norm{\Phi_{\alpha}(\ga_{n})}}} \le e^{-N_{\alpha} R_n}  \begin{array}{cc} \text{for all } T_\alpha^\xi \\ \text{representing } \xi\end{array}\right\}.
\end{aligned}
$$
After passing to a subsequence, we can suppose that $\frac{\Phi_{\alpha}(\ga_{n})}{\norm{\Phi_{\alpha}(\ga_{n})}} \to S_{\alpha} \in \End(V_{\alpha})$ for each $\alpha \in \theta$. Then 
$$
Z \subset \bigcup_{\alpha \in \theta} \left\{   \xi \in \partial_{\theta} X : S_{\alpha}T_{\alpha}^{\xi} = 0 \begin{array}{cc} \text{for all } T_\alpha^\xi \\ \text{representing } \xi\end{array}\right\}.
$$

Now fix $h_1,\dots, h_m \in \Gamma$ and $\xi \in Z$. Fix a representative $(T_\alpha^\xi)_{\alpha \in \theta}$ of $\xi$. Using Lemma \ref{lem:rank one limits in Valpha} and Lemma \ref{lem:rho irr implies flag irr}, we can fix $\ga \in \Ga$ such that 
$$
 S_{\alpha}\Phi_\alpha(h_j^{-1} \gamma)T_\alpha^\xi \neq 0
 $$
for all $\alpha \in \theta$ and all $1 \leq j \leq m$. We claim that 
$$
\gamma \xi \notin \bigcup_{j=1}^m h_j Z. 
$$
Notice that 
$$
(h_j^{-1}\gamma \xi)(go)  =\xi( \gamma^{-1} h_j g o) -\xi(\gamma^{-1} h_j o) 
$$
and so 
$$
\left( T_\alpha^{h_j^{-1} \gamma \xi} \right)_{\alpha \in \theta}:= \left( \frac{1}{\norm{\Phi_\alpha(h_j^{-1} \gamma )T_\alpha^{\xi}}} \Phi_\alpha(h_j^{-1} \gamma )T_\alpha^\xi\right)_{\alpha \in \theta}
$$
is a  representative of $h_j^{-1} \gamma \xi$. Further,  
$$
S_\alpha T_\alpha^{h_j^{-1} \gamma \xi} \neq 0
$$
for all $\alpha \in \theta$ and all $1 \leq j \leq m$. So 
$$
h_j^{-1} \gamma \xi \notin Z
$$
for all $1 \leq j \leq m$. Hence 
$$
\gamma \xi \notin \bigcup_{j=1}^m h_j Z. 
$$
Thus Property \ref{item:baire} holds. 
\qed

\subsection{Proof of Theorem~\ref{thm:transverse well-behaved}}From Theorem~\ref{thm:Zdense PS} we know that Properties \ref{item:coycles are bounded},  \ref{item:almost constant on shadows}, \ref{item:empty Z intersection}, and \ref{item:baire} hold.  Property \ref{item:shadow inclusion} follows from the definition of shadows. Property \ref{item:diam goes to zero ae} follows from Lemma \ref{lem:diam decay} and the definition of a transverse group. It remains to verify Properties \ref{item:properness} and \ref{item:intersecting shadows}. 

We first verify Property \ref{item:properness}.

\begin{lemma} \label{lem:properness}
   For any $T > 0$, the set $\{ \gamma \in \Gamma : \phi(\kappa(\gamma)) \leq T\}$ is finite. 

\end{lemma}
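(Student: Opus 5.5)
The plan is to argue by contradiction using the Shadow Lemma, the shrinking of shadows, and the absence of heavy atoms. Note first that for a general $\phi \in \mfa_\theta^*$ the statement cannot follow from transversality alone. Transversality gives $\alpha(\kappa(\gamma_n)) \to +\infty$ for $\alpha \in \theta$ along any escaping sequence, and hence $\omega_\alpha(\kappa(\gamma_n)) \to +\infty$ (on $\mfa^+$ each $\omega_\alpha$ is a non-negative combination of simple roots with positive $\alpha$-coefficient). But $\phi$ may have coefficients of mixed sign in the basis $\{\omega_\alpha\}_{\alpha\in\theta}$. So I will use the existence of the coarse PS-measure $\mu$, together with the PS-system structure from Theorem~\ref{thm:Zdense PS}.

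Suppose there are infinitely many distinct $\gamma_n \in \Gamma$ with $\phi(\kappa(\gamma_n)) \le T$.

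\emph{Step 1.} Properties \ref{item:coycles are bounded}--\ref{item:empty Z intersection} hold by Theorem~\ref{thm:Zdense PS}. Hence Proposition~\ref{prop.shadowlemma} gives a fixed large $R$ and $C>1$ with
$$
\mu(\Oc_R^\theta(\gamma_n)) \ge C^{-1} e^{-\delta T} =: c > 0
$$
for all $n$. (I will check that the lower bound in the Shadow Lemma uses only these three axioms and not properness.)

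\emph{Step 2.} The sequence $\{\gamma_n\}$ is escaping and $\Gamma$ is $\Psf_\theta$-transverse, so $\min_{\alpha\in\theta}\alpha(\kappa(\gamma_n)) \to +\infty$. Passing to a subsequence, $U_\theta(\gamma_n) \to x \in \Fc_\theta$. By Lemma~\ref{lem:endpoint in shadow}, $U_\theta(\gamma_n) \in \Oc_R^\theta(\gamma_n)$, and by Lemma~\ref{lem:diam decay}, ${\rm diam}\,\Oc_R^\theta(\gamma_n) \to 0$. So the shadows lie in arbitrarily small neighbourhoods of $x$, and outer regularity of $\mu$ gives $\mu(\{x\}) \ge c$.

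\emph{Step 3.} I will then contradict the existence of this heavy atom. The point $x$ lies in $\Lambda_\theta(\Gamma)$, and $\Lambda_\theta(\Gamma)$ is infinite. Indeed, strong $(\Phi_\alpha)_{\alpha\in\theta}$-irreducibility rules out the elementary case via Lemma~\ref{lem:rho irr implies flag irr}, since a finite limit set would give a finite-index subgroup fixing the lines $\zeta_\alpha(x)$. The aim is to show that infinitely many points of the orbit $\Gamma x$ have mass bounded below, which is impossible for a probability measure.

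\begin{itemize}
\item If $\delta = 0$, quasi-invariance gives $\mu(\{\gamma x\}) \ge C^{-1}\mu(\{x\})$ for every $\gamma$, and the infinite orbit gives the contradiction.
\item If $\delta>0$, I would first try to find a point $x' \in \Gamma x$ and infinitely many $\gamma \in \Gamma$ with $\gamma x'$ pairwise distinct and $-\phi(x'(\gamma^{-1} o))$ bounded above. One route is to reuse the $\gamma_n$ themselves. Since $\mu(\Oc_R^\theta(\gamma_n)) \ge c$, the masses at the points $\gamma_n \xi_n$ lying in these shadows satisfy $\phi\,\xi_n(\gamma_n^{-1}o) \ge \phi(\kappa(\gamma_n)) - C(R)$ by Property~\ref{item:almost constant on shadows}. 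If instead the points $\gamma_n^{-1} x$ eventually lie outside small neighbourhoods, then the atom can be transported along a convergence-group north--south argument.
\end{itemize}

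The main obstacle is this last step: controlling the Radon--Nikodym factor along the orbit when $\phi$ has mixed signs. If the direct argument fails, the fallback is to sum the shadow lower bound $c$ over pairwise disjoint shadows. Such shadows can be obtained by selecting a subsequence of $\gamma_n$ with distinct limit points, using the convergence action, and this would make the total mass of $\mu$ infinite.
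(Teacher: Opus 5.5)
Your Steps 1 and 2 match the paper's proof. The Shadow Lemma gives $\mu(\Oc_R^\theta(\gamma_n)) \geq c$, and Lemmas~\ref{lem:endpoint in shadow} and~\ref{lem:diam decay} concentrate this mass into an atom $\mu(\{x\}) \geq c$ at some $x \in \Lambda_\theta(\Gamma)$. Your $\delta = 0$ case is also fine.

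\textbf{The gap is Step 3 for $\delta > 0$.} This is the real content of the lemma, and it is not proved.
\begin{itemize}
\item The routes you list are not carried out. Your target also has the wrong sign: $\mu(\{\gamma x'\})$ is comparable to $e^{-\delta \phi(x'(\gamma^{-1}o))}\mu(\{x'\})$, so you need $\phi(x'(\gamma^{-1}o))$ bounded \emph{above}.
\item Reapplying Property~\ref{item:almost constant on shadows} to points of $\Oc_R^\theta(\gamma_n)$ produces nothing new, because those points converge to $x$.
\item The fallback does not work as stated. The shadows of bounded-magnitude elements are exactly the ones that collapse onto the single atom $x$. Nothing produces bounded-magnitude elements whose $U_\theta$-images accumulate at infinitely many points, so there are no disjoint shadows to sum over. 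Translating by $g \in \Gamma$ only gives lower bounds that depend on $g$ through $T + C(g)$.
\end{itemize}

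\textbf{How the paper finishes.} The orbit $\Gamma x$ is a countable $\Gamma$-invariant subset of $\Lambda_\theta(\Gamma)$ with positive mass. So $\mu(\Gamma x)^{-1}\mu(\Gamma x \cap \cdot)$ is a coarse PS-measure supported on the limit set, and \cite[Proposition 10.1]{BCZZ_counting} gives properness for such measures.

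\textbf{A self-contained ending in your spirit.} Three ingredients are needed.
\begin{enumerate}
\item Since $\mu(\Oc_R^\theta(\gamma_n)) \geq c$ while punctured shrinking neighbourhoods of $x$ have mass tending to $0$, in fact $x \in \Oc_R^\theta(\gamma_n)$ for all large $n$.
\item For $\delta > 0$, the Shadow Lemma together with $\mu \leq 1$ gives $\phi(\kappa(\gamma)) \geq -\delta^{-1}\log C$ for every $\gamma \in \Gamma$. Set $a_n := \gamma_n^{-1}x \in \gamma_n^{-1}\Oc_R^\theta(\gamma_n)$. Property~\ref{item:almost constant on shadows} at $a_n$ shows that $\mu(\{a_n\})$ is at least a constant times $e^{\delta(\phi(\kappa(\gamma_n)) - C(R))}\mu(\{x\})$, which is uniformly bounded below. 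So you are done if $\{a_n\}$ is infinite.
\item Otherwise pass to a subsequence with $a_n \equiv a$, and let $S_\alpha$ be a limit of $\Phi_\alpha(\gamma_n)/\norm{\Phi_\alpha(\gamma_n)}$. Use Lemma~\ref{lem:rho irr implies flag irr} to choose $h \in \Gamma$ with $ha \neq a$ and $\Phi_\alpha(h)\zeta_\alpha(a) \not\subset \ker S_\alpha$ for all $\alpha \in \theta$. Then $ha \in \gamma_n^{-1}\Oc_{R'}^\theta(\gamma_n)$ for some $R'$ and all large $n$. Hence the points $\gamma_n h a \neq x$ converge to $x$ and so are infinitely many. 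Each has mass at least a constant times $e^{-\delta(T + C(R'))}\mu(\{ha\}) > 0$, which contradicts $\mu \leq 1$.
\end{enumerate}
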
 

\begin{proof} For Patterson--Sullivan measures supported on $\Lambda_\theta(\Gamma)$ this was verified in \cite[Proposition 10.1]{BCZZ_counting} and we will reduce to this case.

Suppose for a contradiction that there exists $T > 0$ and an infinite sequence of distinct elements $\{\gamma_n\} \subset \Gamma$ with $\phi(\kappa(\gamma_n)) \leq T$ for all $n$. Passing to a subsequence we can suppose that $U_\theta(\gamma_n) \rightarrow x$. Then $x \in \La_{\theta}(\Ga)$. Fix $R > 0$ large enough to satisfy the Shadow Lemma (Proposition~\ref{prop.shadowlemma}). Then there exists $\epsilon > 0$ such that 
$$
\mu( \Oc_R^{\theta}(\gamma_n)) \geq \epsilon
$$
for all $n \geq 1$. Further, for any open set $\Uc$ containing $x$ in $\partial_\theta X$, Lemma \ref{lem:endpoint in shadow} and  Lemma~\ref{lem:diam decay} imply that 
$$
\Oc_R^{\theta}(\gamma_n) \subset \Uc
$$
for all $n$ sufficiently large. So $\mu(\Uc) \geq\epsilon$. Since $\Uc$ is an arbitrary open set containing $x$, we must have $\mu(\{x\}) \geq \epsilon$. Then 
$$
\mu'(\cdot): = \frac{1}{\mu(\Gamma x)} \mu( \Gamma x \cap (\cdot))
$$
defines a coarse $(\Gamma, \phi, \delta)$-Patterson--Sullivan measure supported on $\Lambda_\theta(\Gamma)$. Now by \cite[Proposition 10.1]{BCZZ_counting}, the set $\{ \gamma \in \Gamma : \phi(\kappa(\gamma)) \leq T\}$ is finite, which is a contradiction. 
\end{proof} 

We establish two lemmas before proving Property \ref{item:intersecting shadows}. The following is a special case of \cite[Proposition 3.3(7)]{BCZZ_coarse}.

\begin{lemma} \label{lem:BCZZ converging to the same point}
For any $\{g_n \}, \{ h_n \} \subset \Ga$ such that $\phi(\kappa(g_n)) \le \phi(\kappa(h_n))$ for all $n \ge 1$ and $\{ g_n^{-1} h_n \}$ is escaping, we have
   $$
   \dist( U_{\theta}(h_n^{-1}), U_{\theta}(h_n^{-1} g_n)) \to 0 \quad \text{as } n \to + \infty
   $$
   where $\dist$ is any metric on $\Fc_{\theta}$ compatible to the standard topology on $\Fc_{\theta}$.
\end{lemma}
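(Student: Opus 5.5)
The plan is to argue by contradiction using the convergence dynamics of $\Gamma$ on $\Lambda_\theta(\Gamma)$, and to use the hypothesis $\phi(\kappa(g_n)) \le \phi(\kappa(h_n))$ only in one degenerate configuration. Set $b_n := h_n^{-1} g_n$, so that $h_n^{-1} = b_n g_n^{-1}$. By hypothesis $\{b_n\}$ is escaping. By Observation~\ref{obs:symmetry theta} we may assume $\theta$ is symmetric, so that $U_\theta$ and $U_{\opp^*\theta}$ are defined on all but finitely many elements of $\Gamma$. Suppose the conclusion fails. After passing to a subsequence, $\dist(U_\theta(h_n^{-1}), U_\theta(b_n)) \ge \epsilon > 0$ for all $n$, and we may also assume:
\begin{itemize}
\item $U_\theta(b_n) \to x$ and $U_{\opp^*\theta}(b_n^{-1}) \to y$ with $x, y$ limit points;
\item either $\{g_n\}$ is constant, or $g_n$ is escaping with $U_\theta(g_n^{-1}) \to z$.
\end{itemize}

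\textbf{Case 1: $g_n = c$ is constant.} Proposition~\ref{prop:NS dynamics} says that $b_n$ converges to $x$ uniformly on compact sets of points transverse to $y$. The points $c^{-1}w$, for $w$ in a small open set transverse to $c y$, form such a compact set, so $b_n c^{-1}$ has attracting point $x$. Applying Proposition~\ref{prop:NS dynamics} in the other direction gives $U_\theta(h_n^{-1}) = U_\theta(b_n c^{-1}) \to x$, contradicting the choice of $\epsilon$.

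\textbf{Case 2: $g_n$ is escaping and $z \neq y$.} Transversality of distinct limit points gives an open set $\Uc \subset \Fc_\theta$ whose points $w$ all satisfy two conditions. First, $g_n^{-1} w \to z$ uniformly for $w \in \Uc$. Second, $z$ lies in the region transverse to $y$, on which $b_n \to x$ uniformly. Hence $h_n^{-1} w = b_n g_n^{-1} w \to x$ uniformly on $\Uc$, and Proposition~\ref{prop:NS dynamics} gives $U_\theta(h_n^{-1}) \to x$ again, a contradiction.

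\textbf{Case 3: $z = y$ (the main obstacle).} This is exactly where the inequality $\phi(\kappa(g_n)) \le \phi(\kappa(h_n))$ must be used. The rank-one picture explains why. The Gromov product $(h_n^{-1}o \mid h_n^{-1}g_n o)_o$ equals $(o \mid g_n o)_{h_n o}$, which is at least $\tfrac12 \dist(g_n o, h_n o)$ whenever $|g_n| \le |h_n|$, and so tends to $\infty$. I plan to imitate this with $\phi$-magnitudes.

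The first tool is the linear-algebra estimate in each Tits representation. For $\alpha \in \theta$, the angle between $\zeta_\alpha(U_\theta(AB))$ and $\zeta_\alpha(U_\theta(A))$ is bounded by
\[
\frac{\sigma_2(\Phi_\alpha(A))\,\sigma_1(\Phi_\alpha(B))}{\sigma_1(\Phi_\alpha(AB))}.
\]
I would apply this with $A = h_n^{-1}$ and $B = g_n$.

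The second tool converts the hypothesis into a statement about $\kappa$. I would use the coarse triangle inequalities for $\omega_\alpha \kappa$ coming from Property~\ref{item:SVs of Tits repn}, together with the convergence-group estimate that for transverse groups $\phi(\kappa)$ is coarsely a ``Gromov-hyperbolic length''. Concretely, the coarse additivity of $\phi\circ\kappa$ along non-cancelling products, from \cite{BCZZ_coarse}, should show the following. If the above quantity did not tend to zero, then $\phi(\kappa(h_n))$ would be at most roughly $\phi(\kappa(g_n)) - \phi(\kappa(b_n)) + O(1)$. This contradicts $\phi(\kappa(g_n)) \le \phi(\kappa(h_n))$ once $\phi(\kappa(b_n)) \to \infty$, which holds by Lemma~\ref{lem:properness}.

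I expect this last comparison to be the hard step. The difficulty is to pass from the single functional $\phi$ to the individual root and weight quantities $\alpha(\kappa)$ and $\omega_\alpha(\kappa)$ that control the angles. If the direct estimate proves too delicate, the fallback is to quote the coarse-cocycle framework of \cite[Proposition 3.3]{BCZZ_coarse}, which packages exactly this Gromov-product statement for transverse groups.
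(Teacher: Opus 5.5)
You have not proved the lemma. Cases 1 and 2 are essentially sound, but they never use the hypothesis $\phi(\kappa(g_n))\le\phi(\kappa(h_n))$. Case 3, where the content of the lemma lies, is left open. You correctly name the inequality you need, $\phi(\kappa(h_n))\le\phi(\kappa(g_n))-\phi(\kappa(h_n^{-1}g_n))+O(1)$, but you give no mechanism for it.

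The tool you propose cannot supply one. With $A=\Phi_\alpha(h_n^{-1})$ and $B=\Phi_\alpha(g_n)$, your angle bound is one-sided. It also involves $\log\sigma_2(\Phi_\alpha(h_n^{-1}))=(N_\alpha\omega_{\opp^*\alpha}-\opp^*\alpha)(\kappa(h_n))$, which a single inequality on $\phi$ cannot control. For instance, take $\Gsf=\SL(3,\Rb)$, $\alpha=\alpha_1$ and $\phi=\omega_{\alpha_1}$. The bound is then $\sigma_1(g_n)/(\sigma_2(h_n)\sigma_1(h_n^{-1}g_n))$. The hypothesis $\sigma_1(g_n)\le\sigma_1(h_n)$ only bounds this by $\frac{\sigma_1(h_n)}{\sigma_2(h_n)}\cdot\frac{1}{\sigma_1(h_n^{-1}g_n)}$, a product of a factor tending to $+\infty$ and one tending to $0$. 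Your Gromov-product heuristic works in $\SL(2,\Rb)$ only because there $\sigma_2(h_n^{-1})=1/\sigma_1(h_n)$. In Case 3 some further input is required, namely transversality of the limit set, and you never use it there.

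The missing idea is that the contradiction hypothesis itself produces non-cancellation, with two-sided estimates. This also makes your case split unnecessary.

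1. Pass to a subsequence with $U_\theta(h_n^{-1})\to x$ and $U_\theta(h_n^{-1}g_n)\to y\neq x$. The sequence $\{h_n\}$ escapes: otherwise, along a constant subsequence, $\phi(\kappa(g_n))$ would be bounded. Lemma~\ref{lem:properness} would then put $\{g_n\}$, and hence $\{g_n^{-1}h_n\}$, in a finite set.
2. After symmetrizing $\theta$ (Observation~\ref{obs:symmetry theta}), $x$ and $y$ are distinct points of $\La_\theta(\Ga)$, hence transverse.
3. For $\alpha\in\theta$, let $T_\alpha$ and $S_\alpha$ be limits of $\Phi_\alpha(h_n)/\norm{\Phi_\alpha(h_n)}$ and $\Phi_\alpha(h_n^{-1}g_n)/\norm{\Phi_\alpha(h_n^{-1}g_n)}$. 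Lemma~\ref{lem:rank one limits in Valpha} gives $\ker T_\alpha=\zeta_\alpha^*(x)$ and ${\rm im}\,S_\alpha=\zeta_\alpha(y)$, so $T_\alpha S_\alpha\neq 0$ by Property~\ref{item:boundary maps of Tits repn}.
4. Writing $g_n=h_n\cdot h_n^{-1}g_n$ and using Property~\ref{item:SVs of Tits repn},
\[
N_\alpha\omega_\alpha\bigl(\kappa(g_n)-\kappa(h_n)-\kappa(h_n^{-1}g_n)\bigr)\to\log\norm{T_\alpha S_\alpha}\in(-\infty,0].
\]
5. This is a coarse \emph{equality} for every $\omega_\alpha$ with $\alpha\in\theta$. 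It therefore passes to the linear combination $\phi$ whatever the signs of its coefficients, which removes the difficulty you anticipated.
6. Then $0\ge\phi(\kappa(g_n))-\phi(\kappa(h_n))\ge -C+\phi(\kappa(h_n^{-1}g_n))\to+\infty$ by Lemma~\ref{lem:properness}, a contradiction.

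Your fallback of citing \cite[Proposition 3.3]{BCZZ_coarse} is legitimate in principle; the paper itself notes that this lemma is a special case of part (7). But if you rely on it, your case analysis is superfluous, and as written the proposal does not itself establish the lemma.
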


\begin{remark} If $\{ g_n^{-1} h_n \}$ is escaping and $\phi(\kappa(g_n)) \le \phi(\kappa(h_n))$ for all $n \ge 1$, then Lemma~\ref{lem:properness} implies that  $\{h_n\}$ is escaping. Then by the definition of transverse groups,
$$
\lim_{n \rightarrow  + \infty} \alpha(\kappa(h_n^{-1})) =+\infty =  \lim_{n \rightarrow  + \infty} \alpha(\kappa(h_n^{-1} g_n)) 
$$
for all $\alpha \in \theta$. Thus $U_{\theta}(h_n^{-1})$ and  $U_{\theta}(h_n^{-1} g_n)$ are both well-defined for $n$ sufficiently large. 

\end{remark}

\begin{proof} 
Suppose not. Then we can find  $\{g_n \}, \{ h_n \} \subset \Ga$ such that $\phi(\kappa(g_n)) \le \phi(\kappa(h_n))$ for all $n \ge 1$,  $\{ g_n^{-1} h_n \}$ is escaping, and
   $$
  \liminf_{n \rightarrow  + \infty} \dist( U_{\theta}(h_n^{-1}), U_{\theta}(h_n^{-1} g_n)) > 0.
   $$
   Passing to a subsequence we can suppose that $U_{\theta}(h_n^{-1}) \rightarrow x$ and $U_{\theta}(h_n^{-1} g_n) \rightarrow y$ with $x \neq y$. Since $x, y \in \La_{\theta}(\Ga)$, we may assume that $\theta$ is symmetric by replacing $\theta$ with $\theta \cup \opp^* \theta$ (see Observation \ref{obs:symmetry theta}), and hence we must have that $x$ and $y$ are transverse due to the $\Psf_{\theta}$-transversality of $\Ga$.

Fix $\alpha \in \theta$. Passing to a subsequence we can suppose that $\frac{\Phi_\alpha(h_n)}{\norm{\Phi_\alpha(h_n)}} \rightarrow T_\alpha$ and $\frac{\Phi_\alpha(h_n^{-1}g_n)}{\norm{\Phi_\alpha(h_n^{-1}g_n)}} \rightarrow S_\alpha$ in $\End(V_\alpha)$. Lemma ~\ref{lem:rank one limits in Valpha}  implies that $\ker T_\alpha = \zeta_\alpha^*(x)$ and ${\rm im} \, S_\alpha = \zeta_\alpha(y)$. Since $x$ is transverse to $y$, Property~\ref{item:boundary maps of Tits repn} implies that  $T_\alpha S_\alpha \neq 0$. Thus 
$$
0 < \norm{T_\alpha S_\alpha} \leq 1.
$$
Then by Property~\ref{item:SVs of Tits repn}, 
\begin{align*}
\lim_{n \rightarrow  + \infty} & N_{\alpha} \omega_\alpha\left(  \kappa(g_n) -\kappa(h_n) - \kappa( h_n^{-1} g_n) \right) = \lim_{n \rightarrow  + \infty} \log \frac{\norm{ \Phi_\alpha( h_n) \Phi_\alpha(h_n^{-1}g_n)}}{\norm{\Phi_\alpha(h_n)}\norm{\Phi_\alpha(h_n^{-1}g_n)}}  \\
& = \log \norm{T_\alpha S_\alpha} 
\end{align*} 
which is finite.
 Since $\alpha \in \theta$ was arbitrary and $\{ \omega_\alpha\}_{\alpha \in \theta}$ is a basis for $\mfa_\theta^*$, there exists $C > 0$ such that 
 $$
-C \leq  \phi\left( \kappa(g_n) -\kappa(h_n) - \kappa( h_n^{-1} g_n) \right)  \leq C
 $$
 for all $n \geq 1$. 
 
 Since the map $\gamma \in \Gamma \mapsto  \phi(\kappa(\gamma)) \in \Rb$ is proper by Lemma \ref{lem:properness}, 
 $$
0 \geq  \lim_{n \rightarrow  + \infty} \phi(\kappa(g_n)) -\phi(\kappa(h_n)) \geq -C +  \lim_{n \rightarrow  + \infty} \phi(\kappa( h_n^{-1} g_n)) = +\infty
$$
and we have a contradiction. 
\end{proof}

\begin{lemma}\label{lem: the claim needed for PS7} For every $R > 0$ there exists $R'> 0$ such that: if $\gamma_1, \gamma_2 \in \Gamma$, $\Oc_{R}^\theta(\gamma_1) \cap \Oc_{R}^\theta(\gamma_2) \neq \emptyset$, and $\phi(\kappa(\ga_1)) \le \phi(\kappa(\ga_2))$, then 
\begin{equation*}
\ga_1^{-1} \Oc_{R}^{\theta}(\ga_2) \subset \Oc_{R'}^{\theta}(\ga_1^{-1} \ga_2).
\end{equation*}
\end{lemma}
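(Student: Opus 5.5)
The plan is to reduce the inclusion to an almost-additivity estimate for Cartan projections, and then prove that estimate by contradiction and compactness, in the style of Lemma~\ref{lem:BCZZ converging to the same point}.

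\textbf{Step 1: reduction.} Unwinding the definition of shadows, $\xi' \in \ga_1^{-1}\Oc_R^\theta(\ga_2)$ means $\xi' = \ga_1^{-1}\ga_2 \xi$ with $\omega_\alpha \xi(\ga_2^{-1}o) > \omega_\alpha\kappa(\ga_2) - R$ for all $\alpha\in\theta$. We must show $\omega_\alpha \xi(\ga_2^{-1}\ga_1 o) > \omega_\alpha \kappa(\ga_1^{-1}\ga_2) - R'$. The cocycle identity gives
$$
B_\theta(\ga_2,\xi) = B_\theta(\ga_1, \ga_1^{-1}\ga_2\xi) + B_\theta(\ga_1^{-1}\ga_2,\xi).
$$
By Lemma~\ref{lem:Titsexpress}, $\omega_\alpha B_\theta(\ga_1,\cdot) \le \omega_\alpha\kappa(\ga_1)$. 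Hence
$$
\omega_\alpha \xi(\ga_2^{-1}\ga_1 o) \ge \omega_\alpha\kappa(\ga_2) - \omega_\alpha\kappa(\ga_1) - R .
$$
It therefore suffices to find $C=C(R)$ such that, under the hypotheses of the lemma,
$$
\omega_\alpha\bigl(\kappa(\ga_1) + \kappa(\ga_1^{-1}\ga_2) - \kappa(\ga_2)\bigr) \le C \quad \text{for all } \alpha\in\theta .
$$
Then $R' := R + C$ works. The reverse inequality, with constant $0$, is automatic from submultiplicativity of $\norm{\Phi_\alpha(\cdot)}$.

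\textbf{Step 2: contradiction setup.} Suppose no such $C$ exists. Then there are sequences $\ga_{1,n},\ga_{2,n}$ with $\phi(\kappa(\ga_{1,n}))\le\phi(\kappa(\ga_{2,n}))$, points $\eta_n \in \Oc_R^\theta(\ga_{1,n})\cap\Oc_R^\theta(\ga_{2,n})$, and some fixed $\alpha$ for which the defect above tends to $+\infty$. By Lemma~\ref{lem:Lipschitz}, if $\ga_{1,n}^{-1}\ga_{2,n}$ stays bounded the defect is bounded. So $\ga_{1,n}^{-1}\ga_{2,n}$ is escaping, and likewise $\ga_{1,n}$ is escaping. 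As in Lemma~\ref{lem:BCZZ converging to the same point}, I would replace $\theta$ by $\theta\cup\opp^*\theta$ using Observation~\ref{obs:symmetry theta}, and pass to subsequences so that the following hold:
\begin{itemize}
\item $\Phi_\alpha(\ga_{1,n})/\norm{\cdot}\to A$ and $\Phi_\alpha(\ga_{1,n}^{-1}\ga_{2,n})/\norm{\cdot}\to B$;
\item $U_\theta(\ga_{1,n}^{-1}\ga_{2,n})\to x$ and $U_{\opp^*\theta}(\ga_{1,n}^{-1})\to z$.
\end{itemize}
By Property~\ref{item:SVs of Tits repn}, a diverging defect means exactly $AB = 0$. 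By Lemma~\ref{lem:rank one limits in Valpha}, $\ker A = \zeta^*_\alpha(z)$ and ${\rm im}\,B = \zeta_\alpha(x)$. So $AB=0$ says $x$ and $z$ are not transverse. Both are limit points, and transversality of $\Ga$ forces them to correspond to the same point of $\La_{\theta\cup\opp^*\theta}(\Ga)$. Lemma~\ref{lem:BCZZ converging to the same point} is consistent with this, since it identifies $x$ with $\lim U_\theta(\ga_{1,n}^{-1})$.

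\textbf{Step 3: the shadow condition (main obstacle).} The nonempty intersection of the shadows must now rule this out; this is the step I expect to be hardest. Set $\xi'_n := \ga_{1,n}^{-1}\eta_n$.
\begin{itemize}
\item From $\eta_n\in\Oc_R^\theta(\ga_{1,n})$, any representative $Q_n$ of $\xi'_n$ satisfies $\norm{\Phi_\alpha(\ga_{1,n})Q_n}/\norm{\Phi_\alpha(\ga_{1,n})}\ge e^{-N_\alpha R}$. Passing to a limit $Q$, this gives $AQ\neq 0$, so ${\rm im}\,Q\not\subset \zeta^*_\alpha(z)$.
\item From $\eta_n\in\Oc_R^\theta(\ga_{2,n})$, we have $\xi'_n = (\ga_{1,n}^{-1}\ga_{2,n})\zeta_n$, where a representative $T_n$ of $\zeta_n$ is not almost killed by $\Phi_\alpha(\ga_{2,n})/\norm{\cdot}$.
\end{itemize}
What I would try first is to show that $BT\neq0$ for $T=\lim T_n$. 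If so, the representative of $\xi'_n$ converges to $BT/\norm{BT}$, which has image $\zeta_\alpha(x)$. Then $\zeta_\alpha(x)\not\subset\zeta_\alpha^*(z)$, i.e.\ $x$ and $z$ are transverse, contradicting Step~2.

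The delicate point is that $BT\neq 0$ does not follow formally, because $\norm{\Phi_\alpha(\ga_{2,n})}$ is itself much smaller than the product of norms. If the direct route fails, I would fall back on a different argument. Lemma~\ref{lem:diam decay} and Lemma~\ref{lem:endpoint in shadow} show that $\eta_n$ is close to both $U_\theta(\ga_{1,n})$ and $U_\theta(\ga_{2,n})$. I would then combine this with Proposition~\ref{prop:comparision with symm shadows} and the convergence-group dynamics on $\La_\theta(\Ga)$, translating the intersecting-shadow condition into a transverse-pair condition that forces $AB\neq 0$.
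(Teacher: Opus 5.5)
\textbf{The gap is in Step 3: $BT\neq 0$ is never proved, and the hypothesis $\phi(\kappa(\ga_1))\le\phi(\kappa(\ga_2))$ is never used, although it is indispensable.}

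Your Step 1 is correct. The cocycle identity together with $\omega_\alpha B_\theta(\ga_1,\cdot)\le\omega_\alpha\kappa(\ga_1)$ does reduce the inclusion to the bound $\omega_\alpha(\kappa(\ga_1)+\kappa(\ga_1^{-1}\ga_2)-\kappa(\ga_2))\le C$. Note that this reverses the paper's order: there, that bound (Lemma~\ref{lem:transversePS7_2}) is deduced from the present lemma.

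But Step 3 carries all the content, and you leave its key claim unproved. To see why the comparison hypothesis cannot be avoided, take $\Hb^2$ and let $\ga_2 o$ lie near the geodesic from $o$ to a much farther point $\ga_1 o$. Then the two shadows intersect, yet the defect is comparable to $\dist_X(\ga_1 o,\ga_2 o)$ and the inclusion fails. So your direct route, which uses only the two shadow conditions, cannot work. Neither can the unspecified fallback via Proposition~\ref{prop:comparision with symm shadows} and convergence dynamics, as stated.

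You have also misread Lemma~\ref{lem:BCZZ converging to the same point}. With $g_n=\ga_{1,n}$ and $h_n=\ga_{2,n}$, it says that $U(\ga_{2,n}^{-1})$ and $U(\ga_{2,n}^{-1}\ga_{1,n})$ have the same limit. It does not identify $x$ with $z=\lim U(\ga_{1,n}^{-1})$; that would require the reverse inequality.

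\textbf{The fix.} The correct reading of that lemma is exactly the missing idea.
\begin{enumerate}
\item The sequence $\ga_{2,n}$ escapes, by Lemma~\ref{lem:properness} and the comparison hypothesis. Let $D$ be a limit of $\Phi_\alpha(\ga_{2,n})/\norm{\Phi_\alpha(\ga_{2,n})}$.
\item Apply Lemma~\ref{lem:BCZZ converging to the same point} to $\theta\cup\opp^*\theta$, then Lemma~\ref{lem:rank one limits in Valpha}. This gives $\ker D=\ker B$.
\item The condition $\eta_n\in\Oc^\theta_R(\ga_{2,n})$ gives $DT\neq 0$. Hence $BT\neq 0$.
\item Your argument now closes: $Q=BT/\norm{BT}$, and $AQ\neq 0$ forces $AB\neq 0$, a contradiction.
\end{enumerate}

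At that point, however, the detour through almost-additivity and the first shadow is unnecessary. The paper runs this same kernel argument directly on the inclusion. Take a point of $\ga_{2,n}^{-1}\Oc^\theta_R(\ga_{2,n})$ that lies outside $\ga_{2,n}^{-1}\ga_{1,n}\Oc^\theta_n(\ga_{1,n}^{-1}\ga_{2,n})$. For some $\alpha\in\theta$, it has a limiting representative $T$ with $DT\neq 0=BT$, which contradicts $\ker D=\ker B$. The nonempty-intersection hypothesis is not even needed there.
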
 

\begin{proof} Suppose not. Then for every $n \geq 1$ we can fix $g_n, h_n \in \Gamma$ and $\xi_n \in \partial_\theta X$ such that $\Oc_{R}^\theta(g_n) \cap \Oc_{R}^\theta(h_n) \neq \emptyset$, $\phi(\kappa(g_n)) \le \phi(\kappa(h_n))$, and
\begin{equation*}
\xi_n \in h_n^{-1} \Oc_{R}^{\theta}(h_n) \smallsetminus h_n^{-1} g_n \Oc_{n}^{\theta}(g_n^{-1} h_n).
\end{equation*}
After passing to a subsequence, we can fix $\alpha \in \theta$  such that 
$$
\omega_{\alpha} \xi_n (h_n^{-1} o)  > \omega_{\alpha} \kappa(h_n) - R \quad \text{and} \quad \omega_{\alpha} \xi_n (h_n^{-1}g_n o)  \le \omega_{\alpha} \kappa(g_n^{-1} h_n) - n
$$
for all $n \geq 1$. 

For each $n$, fix $(T_\beta^{\xi_n})_{\beta \in \theta}$ representing $\xi_n$. Then by Property \ref{item:SVs of Tits repn},
$$
\norm{ \frac{\Phi_{\alpha}(h_n)}{\norm{\Phi_{\alpha}(h_n)}} T_{\alpha}^{\xi_n}}  \ge e^{-N_{\alpha} R} \quad \text{and} \quad 
\norm{ \frac{\Phi_{\alpha}(g_n^{-1}h_n)}{\norm{\Phi_{\alpha}(g_n^{-1} h_n)}} T_{\alpha}^{\xi_n}}  \le e^{-N_{\alpha} n}.
$$
Passing to a subsequence, we can assume 
$$
\frac{\Phi_{\alpha}(h_n)}{\norm{\Phi_{\alpha}(h_n)}} \to S_{\alpha}, \quad \frac{\Phi_{\alpha}(g_n^{-1}h_n)}{\norm{\Phi_{\alpha}(g_n^{-1} h_n)}} \to S_{\alpha}', \quad \text{and} \quad T_\alpha^{\xi_n} \rightarrow T_\alpha
$$
in $\End(V_\alpha)$.  Then  we have
$$
S_{\alpha} T_{\alpha} \neq 0 \quad \text{and} \quad S_{\alpha}' T_{\alpha} = 0.
$$
The second equality implies that $S_\alpha' \notin \SL(V_\alpha)$ and hence $\{ g_n^{-1}h_n\}$ must be escaping. Then by Lemmas \ref{lem:properness} and  \ref{lem:BCZZ converging to the same point} applied to $\theta \cup \opp^* \theta$,  we have 
$$
 \dist( U_{\theta \cup \opp^*\theta}(h_n^{-1}) , U_{\theta \cup \opp^*\theta}(h_n^{-1} g_n) ) \rightarrow 0. 
$$
Then Lemma \ref{lem:rank one limits in Valpha} implies that $\ker S_{\alpha} = \ker S_{\alpha}'$, but this is impossible since $S_{\alpha} T_{\alpha} \neq 0$ and $S_{\alpha}' T_{\alpha} = 0$. 
\end{proof}

We verify the two parts of Property \ref{item:intersecting shadows} separately.

\begin{lemma} \label{lem:transversePS7_1}
For every $R > 0$ there exists $R'> 0$ such that: if $\gamma_1, \gamma_2 \in \Gamma$, $\Oc_{R}^\theta(\gamma_1) \cap \Oc_{R}^\theta(\gamma_2) \neq \emptyset$, and $\phi(\kappa(\ga_1)) \le \phi(\kappa(\ga_2))$, then 
$$
\Oc^\theta_{R}(\gamma_2) \subset \Oc^\theta_{R'}(\gamma_1).
$$
\end{lemma}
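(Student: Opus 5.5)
The plan is to reduce the inclusion to a coarse additivity estimate
$$
\omega_\alpha \kappa(\gamma_2) \geq \omega_\alpha\kappa(\gamma_1) + \omega_\alpha \kappa(\gamma_1^{-1}\gamma_2) - C \quad \text{for all } \alpha \in \theta,
$$
and to get that estimate from Lemma~\ref{lem: the claim needed for PS7} applied to a point in $\Oc_R^\theta(\gamma_1) \cap \Oc_R^\theta(\gamma_2)$. Throughout, fix $R' = R'(R)$ as in Lemma~\ref{lem: the claim needed for PS7}. We use the action formula $g \cdot \xi = \xi \circ g^{-1} - \xi(g^{-1} o)$, which gives, for $\eta = \gamma_2 \xi$,
$$
(\gamma_1^{-1} \eta)(\gamma_1^{-1} o) = -\eta(\gamma_1 o) = \xi(\gamma_2^{-1} o) - \xi(\gamma_2^{-1}\gamma_1 o).
$$
Note that $\eta \in \Oc_{R''}^\theta(\gamma_1)$ exactly when $\omega_\alpha$ of this quantity exceeds $\omega_\alpha \kappa(\gamma_1) - R''$ for every $\alpha \in \theta$.

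\emph{Step 1 (additivity).} Pick $\zeta \in \Oc_R^\theta(\gamma_1) \cap \Oc_R^\theta(\gamma_2)$ and set $\xi_0 := \gamma_2^{-1}\zeta \in \gamma_2^{-1}\Oc_R^\theta(\gamma_2)$. Three bounds hold for every $\alpha \in \theta$:
\begin{itemize}
\item Lemma~\ref{lem:Titsexpress} gives the upper bound $\omega_\alpha \xi_0(\gamma_2^{-1} o) \le \omega_\alpha\kappa(\gamma_2)$.
\item Lemma~\ref{lem: the claim needed for PS7} gives $\gamma_1^{-1}\zeta \in \Oc_{R'}^\theta(\gamma_1^{-1}\gamma_2)$. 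Unwinding the definition of the shadow, this says $\omega_\alpha \xi_0(\gamma_2^{-1}\gamma_1 o) > \omega_\alpha \kappa(\gamma_1^{-1}\gamma_2) - R'$.
\item Since $\zeta \in \Oc_R^\theta(\gamma_1)$, we have $\omega_\alpha\bigl(\xi_0(\gamma_2^{-1} o) - \xi_0(\gamma_2^{-1}\gamma_1 o)\bigr) = \omega_\alpha(-\zeta(\gamma_1 o)) > \omega_\alpha \kappa(\gamma_1) - R$.
\end{itemize}
Combining the three bounds yields the additivity estimate with $C = R + R'$.

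\emph{Step 2 (inclusion).} Now take an arbitrary $\eta = \gamma_2\xi \in \Oc_R^\theta(\gamma_2)$. Then $\omega_\alpha \xi(\gamma_2^{-1} o) > \omega_\alpha\kappa(\gamma_2) - R$. Lemma~\ref{lem:Titsexpress} gives the upper bound $\omega_\alpha \xi(\gamma_2^{-1}\gamma_1 o) \le \omega_\alpha\kappa(\gamma_1^{-1}\gamma_2)$. Hence
$$
\omega_\alpha(\gamma_1^{-1}\eta)(\gamma_1^{-1}o) > \omega_\alpha\kappa(\gamma_2) - \omega_\alpha\kappa(\gamma_1^{-1}\gamma_2) - R \ge \omega_\alpha\kappa(\gamma_1) - (2R + R').
$$
So $\eta \in \Oc^\theta_{2R+R'}(\gamma_1)$, and the lemma holds with constant $2R + R'$.

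The main work is in Step 1. There, the hypothesis $\phi(\kappa(\gamma_1)) \le \phi(\kappa(\gamma_2))$ and transversality enter only through Lemma~\ref{lem: the claim needed for PS7}. Everything else is bookkeeping with the action formula and the sign conventions. I will check carefully that $(\gamma_1^{-1}\gamma_2)^{-1}(\gamma_1^{-1}\zeta) = \xi_0$, so that the shadow condition for $\gamma_1^{-1}\gamma_2$ is evaluated at $(\gamma_1^{-1}\gamma_2)^{-1} o = \gamma_2^{-1}\gamma_1 o$.
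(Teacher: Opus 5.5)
Your proof is correct, and it takes a genuinely different and considerably shorter route than the paper's.

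\textbf{The paper's route.} The paper argues by contradiction and splits into two cases. When $\{g_n^{-1}h_n\}$ is finite, it uses the Lipschitz bounds of Lemma~\ref{lem:Lipschitz}. When $\{g_n^{-1}h_n\}$ is escaping, it combines three ingredients: Lemma~\ref{lem: the claim needed for PS7}, the diameter decay of Lemma~\ref{lem:diam decay}, and an auxiliary claim, proved with the Tits representations, that a uniform metric neighborhood of $g_n^{-1}\Oc_R^\theta(g_n)$ lies inside $g_n^{-1}\Oc_n^\theta(g_n)$. Coarse additivity is then proved afterwards as a separate result, Lemma~\ref{lem:transversePS7_2}.

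\textbf{Your route.} You reverse the order. Your Step~1 is exactly the computation in the paper's proof of Lemma~\ref{lem:transversePS7_2}. That proof uses only Lemma~\ref{lem: the claim needed for PS7} and Lemma~\ref{lem:Titsexpress}, so there is no circularity. Your Step~2 then shows that the lower half of that additivity, combined with the universal bound $\omega_\alpha \xi(h^{-1}o) \le \omega_\alpha\kappa(h)$, formally forces the inclusion. The intersection hypothesis is used only to supply the point $\zeta$ in Step~1.

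\textbf{Checks.} The bookkeeping is right:
\begin{itemize}
\item $(\gamma_1^{-1}\gamma_2\xi)(\gamma_1^{-1}o) = \xi(\gamma_2^{-1}o) - \xi(\gamma_2^{-1}\gamma_1 o)$;
\item $(\gamma_1^{-1}\gamma_2)^{-1}\gamma_1^{-1}\zeta = \gamma_2^{-1}\zeta$;
\item $\zeta(o)=0$ for every $\zeta \in \partial_\theta X$, which justifies $(\gamma_1^{-1}\zeta)(\gamma_1^{-1}o) = -\zeta(\gamma_1 o)$.
\end{itemize}

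\textbf{What your route buys.} It removes the case analysis. It needs neither the metric topology on $\partial_\theta X$ nor Lemma~\ref{lem:diam decay}. It gives an explicit constant $2R + R'$ in terms of the constant of Lemma~\ref{lem: the claim needed for PS7}. It also makes clear that both halves of \ref{item:intersecting shadows} rest on the single transversality input, Lemma~\ref{lem: the claim needed for PS7}. The paper's argument is closer to a geometric compactness argument, but for this statement it yields nothing that your direct computation does not.
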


\begin{proof} Suppose not. Then for every $n \in \Nb$ we can find $g_n, h_n \in \Gamma$ such that $\phi(\kappa(g_n)) \leq \phi(\kappa(h_n))$, $\Oc^\theta_{R}(g_n) \cap \Oc^\theta_{R}(h_n) \neq \emptyset$,  and 
$$
\Oc_{R}^\theta(h_n) \not\subset \Oc_{n}^\theta(g_n).
$$

\medskip

\noindent \textbf{Case 1:} Assume that $\{g_n^{-1}h_n\}$ is finite. Then, passing to a subsequence, we can suppose that $\gamma:=g_n^{-1}h_n$ for all $n$. 

For each $n \in \Nb$, fix 
$$
\xi_n \in g_n^{-1}\Oc_{R}^\theta(h_n) \smallsetminus g_n^{-1}\Oc_{n}^\theta(g_n).
$$
Passing to a subsequence,  there exist $\alpha \in \theta$ such that 
$$\begin{aligned}
\omega_{\alpha} \ga^{-1} \xi_n (\ga^{-1} g_n^{-1} o) & > \omega_{\alpha} \kappa(g_n \ga) - R \quad \text{and} \quad 
\omega_{\alpha} \xi_n ( g_n^{-1} o )  \le \omega_{\alpha} \kappa(g_n) - n
\end{aligned}
$$
for all $n \in \Nb$. Noting that $\ga^{-1} \xi_n (\ga^{-1} g_n^{-1} o) = \xi_n (g_n^{-1} o) - \xi_n (\ga o)$, the two sequences
$$
\ga^{-1} \xi_n (\ga^{-1} g_n^{-1} o) - \xi_n ( g_n^{-1} o) \quad \text{and} \quad \kappa(g_n \ga) -  \kappa(g_n) 
$$
are uniformly bounded by Lemma \ref{lem:Lipschitz} and Property \ref{item:SVs of Tits repn}. This yields a contradiction.

\medskip

\noindent \textbf{Case 2:} Assume that $\{g_n^{-1}h_n\}$ is escaping. 
By Lemma \ref{lem:properness} and Lemma \ref{lem:BCZZ converging to the same point},  we have 
$$
 \dist( U_{\theta}(h_n^{-1}) , U_{\theta}(h_n^{-1} g_n) ) \rightarrow 0. 
$$

By Lemma~\ref{lem: the claim needed for PS7}, we can fix $R' > 0$ such that 
$$
g_n^{-1} \Oc^\theta_R( h_n) \subset  \Oc_{R'}^\theta(g_n^{-1} h_n)
$$
for all $n \in \Nb$. Since $\Oc_R^{\theta}(g_n) \cap \Oc_R^{\theta}(h_n) \neq \emptyset$ by the hypothesis, we also have $\Oc_{R'}^\theta(g_n^{-1} h_n) \cap g_n^{-1} \Oc_R^{\theta}(g_n) \neq \emptyset$ for all large $n \in \Nb$. Note that ${\rm diam} \Oc_{R'}^\theta(g_n^{-1} h_n) \rightarrow 0$ as $n \to + \infty$ by Lemma \ref{lem:diam decay}. This implies that for any small $r > 0$,
$$
g_n^{-1} \Oc^\theta_R( h_n) \subset  \Oc_{R'}^\theta(g_n^{-1} h_n) \subset \Nc_r( g_n^{-1} \Oc^\theta_{R}(g_n)) 
$$
for all large $n \in \Nb$, where $\Nc_{r}(\cdot)$ denotes the $r$-neighborhood in $\partial_{\theta} X$.

\medskip

\noindent \textbf{Claim:} There exists $r > 0$ such that  
$$
\Nc_r( g_n^{-1} \Oc^\theta_{R}(g_n))  \subset g_n^{-1} \Oc^\theta_{n}(g_n) \quad \text{for all large } n \in \Nb.
$$

\noindent \emph{Proof of Claim:} Suppose not. Then after passing to a subsequence, we have for all $n \in \Nb$ that
$$
\Nc_{1/n}( g_n^{-1} \Oc^\theta_{R}(g_n)) \not \subset g_n^{-1} \Oc^\theta_{n}(g_n).
$$
This implies that after passing to a subsequence there exist $\alpha \in \theta$ and sequences $\{\xi_n \}, \{\xi_n'\} \subset \partial_{\theta} X$ such that
$\xi := \lim_{n \to + \infty} \xi_n = \lim_{n \to + \infty} \xi_n'$ and for all $n \in \Nb$,
$$
\omega_{\alpha} \xi_n (g_n^{-1} o) \le \omega_{\alpha} \kappa(g_n) - n \quad \text{and} \quad \omega_{\alpha} \xi_n'(g_n^{-1} o) > \omega_{\alpha} \kappa(g_n) - R.
$$

For each $n \in \Nb$, let $\{p_{n, k} \},\{p_{n, k}'\} \subset \Gsf$ be sequences such that $p_{n, k} o \to \xi_n$ and $p_{n, k}' o \to \xi_n'$ in $\overline{X}^{\theta}$ as $k \to + \infty$.
Then by Lemma \ref{lem:Titsexpress} and Property \ref{item:SVs of Tits repn}, we have 
$$
\lim_{k \to + \infty} \norm{ \frac{\Phi_{\alpha}(g_n)}{\norm{\Phi_{\alpha}(g_n)}} \frac{\Phi_{\alpha}(p_{n, k})}{\norm{\Phi_{\alpha}(p_{n, k})}}} \le e^{-N_{\alpha} n} \quad \text{and} \quad \lim_{k \to + \infty} \norm{ \frac{\Phi_{\alpha}(g_n)}{\norm{\Phi_{\alpha}(g_n)}} \frac{\Phi_{\alpha}(p_{n, k}')}{\norm{\Phi_{\alpha}(p_{n, k}')}}} > e^{-N_{\alpha} R}
$$
Hence, for each $n \in \Nb$, we can choose $p_n := p_{n, k_n}$ and $p_n' := p_{n, k_n'}$ for some $k_n, k_n' \in \Nb$ so that $p_n o \to \xi$ and $p_n' o \to \xi$ in $\overline{X}^{\theta}$, and
$$
\lim_{n \to + \infty} \norm{ \frac{\Phi_{\alpha}(g_n)}{\norm{\Phi_{\alpha}(g_n)}} \frac{\Phi_{\alpha}(p_{n})}{\norm{\Phi_{\alpha}(p_{n})}}} = 0 \quad \text{and} \quad \lim_{n \to + \infty} \norm{ \frac{\Phi_{\alpha}(g_n)}{\norm{\Phi_{\alpha}(g_n)}} \frac{\Phi_{\alpha}(p_{n}')}{\norm{\Phi_{\alpha}(p_{n}')}}} > \frac{e^{-N_{\alpha} R}}{2}.
$$

After passing to a subsequence, we may assume
$$
\frac{\Phi_{\alpha}(g_n)}{\norm{\Phi_{\alpha}(g_n)}} \to S, \quad \frac{\Phi_{\alpha}(p_{n})}{\norm{\Phi_{\alpha}(p_{n})}} \to P, \quad \text{and} \quad \frac{\Phi_{\alpha}(p_{n}')}{\norm{\Phi_{\alpha}(p_{n}')}} \to P' \quad \text{in } \End(V_{\alpha}).
$$
Then it follows from $SP = 0$ that $\{g_n\}$ is escaping, and hence $\alpha(\kappa(g_n)) \to + \infty$. For a sequence $\{\ell_n\} \subset \Ksf$ with $g_n \in \Ksf \Asf^+ \ell_n$ for all $n \in \Nb$, we can assume $\ell_n \to \ell \in \Ksf$ by passing to a subsequence. Then by Lemma \ref{lem:rank one limits in Valpha} and Property \ref{item:boundary maps of Tits repn}, we have $\Phi_{\alpha}(\ell^{-1}) V_{\alpha}^- = \ker S$ and hence 
$$
{\rm im } \, P \subset \Phi_{\alpha}(\ell^{-1}) V_{\alpha}^- \quad \text{and} \quad {\rm im } \, P' \not\subset \Phi_{\alpha}(\ell^{-1}) V_{\alpha}^-.
$$
Since $\norm{P} \le 1$ and ${\rm im } \, P \subset \Phi_{\alpha}(\ell^{-1}) V_{\alpha}^-$, we have for any $a \in \Asf^+$ that 
$$
\norm{ \Phi_{\alpha}(a \ell) P} \le e^{(N_{\alpha} \omega_{\alpha} - \alpha)(\kappa(a))}
$$
by Properties \ref{item:SVs of Tits repn} and \ref{item:splitting for Tits repn}.
Since ${\rm im } \, P' \not\subset \Phi_{\alpha}(\ell^{-1}) V_{\alpha}^-$, there exists a unit $v \in V_{\alpha}$ such that $\Phi_{\alpha}(\ell)P' v = u + w$ for some $u \in V_{\alpha}^+$ and $w \in V_{\alpha}^-$ with $u \neq 0$. Hence, we similarly have for any $a \in \Asf^+$ that 
$$
\norm{ \Phi_{\alpha}(a \ell) P'} \ge e^{N_{\alpha} \omega_{\alpha}(\kappa(a))} \norm{u} - e^{(N_{\alpha} \omega_{\alpha} - \alpha)(\kappa(a))} \norm{w}.
$$

On the other hand, since $\{ p_n o\}, \{p_n' o\} \subset \overline{X}^{\theta}$ converges to the same limit $\xi$, it follows from Lemma \ref{lem:Titsexpress} that for any $a \in \Asf^+$,
$$
\norm{ \Phi_{\alpha}(a \ell) P} = \norm{ \Phi_{\alpha}(a \ell) P'},
$$
and therefore 
$$
e^{(N_{\alpha} \omega_{\alpha} - \alpha)(\kappa(a))} \ge e^{N_{\alpha} \omega_{\alpha}(\kappa(a))} \norm{u} - e^{(N_{\alpha} \omega_{\alpha} - \alpha)(\kappa(a))} \norm{w}.
$$
This implies
$$
e^{-\alpha(\kappa(a))} (1 + \norm{w}) \ge \norm{u}.
$$
Since this holds for any $a \in \Asf^+$ and $\norm{u} > 0$, this is a contradiction and the claim follows. \hfill $\blacktriangleleft$ 

\medskip

Now by the claim, we have
\begin{align*}
g_n^{-1} \Oc^\theta_R( h_n) \subset  \Oc_{R'}^\theta(g_n^{-1} h_n) \subset \Nc_r( g_n^{-1} \Oc^\theta_{R}(g_n))  \subset g_n^{-1} \Oc^\theta_{n}(g_n)
\end{align*}
for $n$ large, which is a contradiction. This finishes the proof.
\end{proof} 

Now we complete the proof of Property \ref{item:intersecting shadows}, and hence of Theorem \ref{thm:transverse well-behaved}, by showing the following. 

\begin{lemma} \label{lem:transversePS7_2}
For every $R > 0$ there exists $C > 0$ such that: if $\gamma_1, \gamma_2 \in \Gamma$, $\Oc_{R}^\theta(\gamma_1) \cap \Oc_{R}^\theta(\gamma_2) \neq \emptyset$, and $\phi(\kappa(\ga_1)) \le \phi(\kappa(\ga_2))$, then
$$
\abs{ \phi\left( \kappa (\ga_2) \right) - \left(  \phi \left( \kappa (\ga_1) \right) + \phi \left( \kappa (\ga_1^{-1}\ga_2) \right)\right) } \le C.
$$

\end{lemma}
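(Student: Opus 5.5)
We argue by contradiction, following the pattern of Lemma~\ref{lem:transversePS7_1}. Suppose there are $g_n, h_n \in \Gamma$ with $\Oc_R^\theta(g_n) \cap \Oc_R^\theta(h_n) \neq \emptyset$ and $\phi(\kappa(g_n)) \le \phi(\kappa(h_n))$, but
$$
\abs{\phi(\kappa(h_n)) - \phi(\kappa(g_n)) - \phi(\kappa(g_n^{-1}h_n))} \to +\infty .
$$
Since $\{\omega_\alpha\}_{\alpha\in\theta}$ is a basis of $\mfa_\theta^*$ and $\phi \in \mfa_\theta^*$, it suffices to show that for each $\alpha \in \theta$ the quantity
$$
N_\alpha\omega_\alpha\bigl(\kappa(h_n) - \kappa(g_n) - \kappa(g_n^{-1}h_n)\bigr) = \log \frac{\norm{\Phi_\alpha(g_n)\Phi_\alpha(g_n^{-1}h_n)}}{\norm{\Phi_\alpha(g_n)}\,\norm{\Phi_\alpha(g_n^{-1}h_n)}}
$$
stays bounded along a subsequence. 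This expression is always $\le 0$ by submultiplicativity, so only a lower bound is needed.

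\textbf{Case 1:} $\{g_n^{-1}h_n\}$ is finite. Pass to a subsequence with $g_n^{-1}h_n = \gamma$ constant. Then $\abs{\omega_\alpha(\kappa(g_n\gamma) - \kappa(g_n))}$ is bounded by Property~\ref{item:SVs of Tits repn}, since $\norm{\Phi_\alpha(g\gamma)}$ and $\norm{\Phi_\alpha(g)}$ differ by at most a factor of $\norm{\Phi_\alpha(\gamma^{\pm1})}$. The term $\kappa(\gamma)$ is fixed, so the expression is bounded.

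\textbf{Case 2:} $\{g_n^{-1}h_n\}$ is escaping. Pass to a subsequence so that $\frac{\Phi_\alpha(g_n)}{\norm{\Phi_\alpha(g_n)}} \to S$ and $\frac{\Phi_\alpha(g_n^{-1}h_n)}{\norm{\Phi_\alpha(g_n^{-1}h_n)}} \to S'$ for each $\alpha \in \theta$. The displayed logarithm then converges to $\log\norm{SS'}$, so it suffices to show $SS' \neq 0$.

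To obtain this, pick a point $\eta \in \Oc_R^\theta(g_n) \cap \Oc_R^\theta(h_n)$ and set $\xi_n := g_n^{-1}\eta \in g_n^{-1}\Oc_R^\theta(g_n)$. By Lemma~\ref{lem: the claim needed for PS7}, there is an $R'$ with
$$
\xi_n \in g_n^{-1}\Oc_R^\theta(h_n) \subset \Oc_{R'}^\theta(g_n^{-1}h_n).
$$
Fix representatives $T_\alpha^{\xi_n}$ of $\xi_n$ and pass to a limit $T_\alpha^{\xi_n} \to T$. Membership in $g_n^{-1}\Oc_R^\theta(g_n)$ gives $\norm{\frac{\Phi_\alpha(g_n)}{\norm{\Phi_\alpha(g_n)}} T_\alpha^{\xi_n}} \ge e^{-N_\alpha R}$, hence $ST \neq 0$.

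The step I expect to be the main obstacle is turning the second membership into $SS' \neq 0$. Write $\xi_n = g_n^{-1}h_n\zeta_n$ with $\zeta_n \in (g_n^{-1}h_n)^{-1}\Oc_{R'}^\theta(g_n^{-1}h_n)$. Using the representative $\frac{\Phi_\alpha(g_n^{-1}h_n)T^{\zeta_n}}{\norm{\Phi_\alpha(g_n^{-1}h_n)T^{\zeta_n}}}$ of $\xi_n$ (as in the proof of Property~\ref{item:baire}), the shadow condition gives $\norm{\frac{\Phi_\alpha(g_n^{-1}h_n)}{\norm{\Phi_\alpha(g_n^{-1}h_n)}} T^{\zeta_n}} \ge e^{-N_\alpha R'}$. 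Hence in the limit $\xi_n$ is represented by $S'T'/\norm{S'T'}$, which is rank one with image ${\rm im}\,S'$.

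If instead $SS' = 0$, then ${\rm im}\,S' \subset \ker S$. Following the argument of the Claim in Lemma~\ref{lem:transversePS7_1}, the representative coming from $\xi_n$ would then be killed by $S$ in the limit. One must handle the non-uniqueness of representatives: I would use the norm-comparison via $\norm{\Phi_\alpha(a\ell)P}$ from that Claim to show that two representatives of the same limit point cannot have images on opposite sides of $\ker S$. This would contradict $ST \neq 0$. Alternatively, use Lemma~\ref{lem:BCZZ converging to the same point} together with transversality to compare $\ker S$ with ${\rm im}\,S'$ directly.

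Either route gives $SS' \neq 0$, so the expression is bounded along a subsequence, which contradicts divergence.
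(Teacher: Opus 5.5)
Your argument is correct, and it rests on the same two inputs as the paper's proof. These are a common point $\eta \in \Oc_R^\theta(g_n)\cap\Oc_R^\theta(h_n)$, and Lemma~\ref{lem: the claim needed for PS7}, which puts $\xi_n = g_n^{-1}\eta$ in $\Oc_{R'}^\theta(g_n^{-1}h_n)$. The difference is in how you combine them.

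The paper never passes to limits. With your $\xi_n$ and $\zeta_n = h_n^{-1}\eta$, it writes down, for each $\alpha\in\theta$, two-sided bounds for $\omega_\alpha\xi_n(g_n^{-1}o)$, $\omega_\alpha\zeta_n(h_n^{-1}o)$ and $\omega_\alpha\zeta_n(h_n^{-1}g_no)$. The lower bounds come from the three shadow memberships and the upper bounds from Lemma~\ref{lem:Titsexpress}. It then uses the cocycle identity
$$
\zeta_n(h_n^{-1}g_n o) = \zeta_n(h_n^{-1}o) - \xi_n(g_n^{-1}o)
$$
to get the explicit estimate $-R-R' \le \omega_\alpha\bigl(\kappa(h_n)-\kappa(g_n)-\kappa(g_n^{-1}h_n)\bigr) \le R$. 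There is no contradiction argument, no case split and no compactness.

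Your route~1 does close the step you flag. If $SS'=0$, then $S$ is singular, so $\{g_n\}$ escapes and $\ker S=\Phi_\alpha(\ell^{-1})V_\alpha^-$. The $\Phi_\alpha(a\ell)$ comparison from the Claim in Lemma~\ref{lem:transversePS7_1} then applies to the two representatives $T$ and $S'T'/\norm{S'T'}$ of the limit point.

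However, that ``main obstacle'' is actually trivial. If $T$ and $P$ represent the same point, then $\norm{\Phi_\alpha(g)T}=\norm{\Phi_\alpha(g)P}$ for every $g\in\Gsf$ by definition, so $\norm{ST}=\norm{SP}$ immediately. Better still, apply this at finite $n$ to the two representatives $T_\alpha^{\xi_n}$ and $\Phi_\alpha(g_n^{-1}h_n)T_\alpha^{\zeta_n}/\norm{\Phi_\alpha(g_n^{-1}h_n)T_\alpha^{\zeta_n}}$ of $\xi_n$. This yields
$$
\norm{\Phi_\alpha(h_n)} \ge \norm{\Phi_\alpha(h_n)T_\alpha^{\zeta_n}} \ge e^{-N_\alpha(R+R')}\norm{\Phi_\alpha(g_n)}\,\norm{\Phi_\alpha(g_n^{-1}h_n)},
$$
which is exactly the paper's lower bound. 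So Case~1, the subsequences and the limit matrices $S,S'$ can all be dropped.

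Your alternative route through Lemma~\ref{lem:BCZZ converging to the same point} is not justified as stated. That lemma compares the repelling points of $h_n$ and $g_n^{-1}h_n$, not $\ker S$ with ${\rm im}\,S'$. It also does not use the shadow-intersection hypothesis, and without that hypothesis $SS'\neq 0$ can fail; for instance, take $g_n=a^n$, $h_n=b^{2n}$ in a Schottky group.
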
 

\begin{proof}
Suppose not. Then for every $n \in \Nb$ we can find $g_n, h_n \in \Gamma$ such that $ \phi( \kappa(g_n) )\leq \phi(\kappa(h_n))$, $\Oc^\theta_{R}(g_n) \cap \Oc^\theta_{R}(h_n) \neq \emptyset$,  and 
$$
\abs{ \phi( \kappa (h_n) ) - \left( \phi(\kappa (g_n)) + \phi( \kappa (g_n^{-1}h_n)) \right) } > n.
$$
First, it is easy to see that $\{g_n\}$ and $\{g_n^{-1} h_n\}$ are escaping sequences. Then we have $\phi(\kappa(h_n)) \ge \phi(\kappa(g_n)) \to + \infty$ by Lemma \ref{lem:properness}. In particular, $\{h_n\}$ is an escaping sequence.

For each $n \in \Nb$, fix $\xi_n \in \Oc^\theta_{R}(g_n) \cap \Oc^\theta_{R}(h_n) $. Then by Lemma~\ref{lem: the claim needed for PS7},  there exists $R' > 0$ such that $\xi_n \in g_n \Oc_{R'}^{\theta}(g_n^{-1} h_n)$, and hence 
$$
h_n^{-1}  \xi_n  \in (g_n^{-1}h_n)^{-1} \Oc_{R'}^{\theta}(g_n^{-1} h_n).
$$
Therefore, together with Lemma \ref{lem:Titsexpress}, we have for each $\alpha \in \theta$ that 
$$
\begin{aligned}
\omega_{\alpha}  \kappa (g_n) & \ge \omega_{\alpha} g_n^{-1} \xi_n (g_n^{-1} o) \ge \omega_{\alpha} \kappa(g_n) - R \\
\omega_{\alpha} \kappa (h_n) & \ge \omega_{\alpha} h_n^{-1} \xi_n (h_n^{-1} o) \ge \omega_{\alpha} \kappa(h_n) - R \\
\omega_{\alpha} \kappa (g_n^{-1}h_n) & \ge \omega_{\alpha} h_n^{-1} \xi_n ((g_n^{-1}h_n)^{-1} o) \ge \omega_{\alpha} \kappa(g_n^{-1} h_n) - R'.
\end{aligned}
$$
Notice that 
\begin{align*}
h_n^{-1} \xi_n ((g_n^{-1}h_n)^{-1} o) & = h_n^{-1} \xi_n ( h_n^{-1} g_n o) = \xi_n( h_n h_n^{-1} g_n o) - \xi_n(h_n o) \\
& = h_n^{-1} \xi_n (h_n^{-1} o)-g_n^{-1} \xi_n (g_n^{-1} o).
\end{align*}
Hence  for each $\alpha \in \theta$, 
$$
-R - R' \le \omega_{\alpha} \kappa (h_n) - \omega_{\alpha} \kappa (g_n) - \omega_{\alpha} \kappa (g_n^{-1}h_n) \le R.
$$
Since $\phi$ is a linear combination of $\{ \omega_{\alpha} : \alpha \in \theta \}$, this is a contradiction.
\end{proof}

\subsection{Proof of Theorem~\ref{thm:uniqueness for transverse}} For Patterson--Sullivan-measures supported on $\Lambda_\theta(\Gamma)$, uniqueness and hence ergodicity was established in ~\cite[Corollaries 12.1, 12.2]{CZZ2024}. Using this result and Proposition \ref{prop:concon limit set for general groups} it suffices to fix a $(\Ga, \phi, \delta^{\phi}(\Ga))$-Patterson--Sullivan measure $\mu$ on $\partial_\theta X$ and show that $\mu(\Lambda_\theta^{\rm con}(\Gamma)) = 1$.

Theorem~\ref{thm:div => pos meas conical} implies that  $\mu(\Lambda_\theta^{\rm con}(\Gamma)) > 0$ and Proposition~\ref{prop:concon limit set for general groups}  implies that $\Lambda_\theta^{\rm con}(\Gamma)$ is $\Gamma$-invariant. Suppose for a contradiction that $\mu(\Lambda_\theta^{\rm con}(\Gamma)) < 1$. Then 
$$
\mu' (\cdot): = \frac{1}{\mu( \partial_\theta X \smallsetminus \Lambda_\theta^{\rm con}(\Gamma))} \mu\big( \cdot \cap (  \partial_\theta X \smallsetminus \Lambda_\theta^{\rm con}(\Gamma))\big)
$$
is a $(\Ga, \phi, \delta^{\phi}(\Ga))$-Patterson--Sullivan measure $\mu$ on $\partial_\theta X$. So Theorem~\ref{thm:div => pos meas conical} applied to $\mu'$ implies that  $\mu'(\Lambda_\theta^{\rm con}(\Gamma)) > 0$ which is impossible. Thus $\mu(\Lambda_\theta^{\rm con}(\Gamma)) = 1$ and hence uniqueness and ergodicity follow from ~\cite[Corollaries 12.1, 12.2]{CZZ2024}. The ``in particular'' part is due to Proposition \ref{prop:concon limit set for transverse groups}.
\qed

\bibliographystyle{alpha}
\bibliography{geom}

\end{document}